\documentclass[a4paper,1pt]{article}

\RequirePackage[OT1]{fontenc}
\RequirePackage{amsthm,amsmath}
\RequirePackage[numbers]{natbib}
\RequirePackage[colorlinks,citecolor=blue,urlcolor=blue]{hyperref}

\usepackage{mathrsfs}
\usepackage{amsmath, amssymb, amsthm, amsfonts, cases}

\allowdisplaybreaks


\newcommand{\X}{ {\mathcal{X}}}
\newcommand{\T}{ {\mathcal{T}}}
\newcommand{\U}{ {\mathcal{U}}}
\newcommand{\A} { {A}}
\newcommand{\B}{ {B}}
\newcommand{\C}{ {C}}
\newcommand{\D}{ {D}}

\newcommand{\LL} {\mathscr{L^*}} 
\newcommand{\GG} {\mathscr{G^*}} 

\newcommand{\LLinf} {\mathscr{L}} 
\newcommand{\GGinf} {\mathscr{G}}

\newcommand{\ExpArgScnd}{{2\sqrt{\A}t}}

\newcommand{\Ffirst}{\mathscr{F}_1}
\newcommand{\Fsecond}{\mathscr{F}_2}
\newcommand{\Fthird}{\mathscr{F}_3}
\newcommand{\Ffourth}{\mathscr{F}_4}
\newcommand{\Fn}{\mathscr{F}_n}

\newcommand{\THfirst}{\theta_{\Ffirst}}
\newcommand{\THsecond}{\theta_{\Fsecond}}
\newcommand{\THthird}{\theta_{\Fthird}}
\newcommand{\THfourth}{\theta_{\Ffourth}}
\newcommand{\THn}{\theta_{\mathscr{F}_n}}

\newcommand{\Vfield}{\textbf{v}}
\newcommand{\Sym}{\textbf{s}}

\numberwithin{equation}{section}
\theoremstyle{plain}
\newtheorem{thm}{Theorem}[section]

\newtheorem{example}{Example}[section]
\newtheorem{proposition}{Proposition}[section]

\newtheorem{remark}{Remark}[section]

\begin{document}

\title{Lie symmetries methods in boundary crossing problems for diffusion processes}
\author{Dmitry Muravey\footnote{e-mail:d.muravey87@gmail.com.}}

\date{}
\maketitle

\begin{abstract}
This paper uses Lie symmetry methods to analyze boundary crossing probabilities for a large class of diffusion processes. We show that if Fokker--Planck--Kolmogorov equation has non-trivial Lie symmetry, then boundary crossing identity exists and depends only on parameters of process and symmetry. For time-homogeneous diffusion processes we found necessary and sufficient conditions of symmetries' existence. This paper shows that if drift function satisfy one of a family of Ricatti equations, then the problem has nontrivial Lie symmetries. For each case we present symmetries in explicit form. Based on obtained results, we derive two-parametric boundary crossing identities and prove its uniqueness. Further, we present boundary crossing identities between different process. We show, that if the problem has 6 or 4 group of symmetries then the first passage time density to any boundary can be explicitly represented in terms of the first passage time by Brownian motion or Bessel process. Many examples are presented to illustrate the method.    
\end{abstract}

\section{Introduction}
Let $b(t)$ be a sufficiently smooth function of time, $W = \left \{W_t : t \geq 0 \right\}$ be a standard Brownian motion and $X = \left\{X_t : t \geq 0 \right\}$ be a solution to the following SDE
\begin{equation}
\label{eq:SDE X}
dX_t = \mu(X_t, t) dt + \sigma(X_t, t) dW_t, \quad X_0 = x_0 \leq b(0).
\end{equation}
We assume that functions $\mu(x,t)$ and $\sigma(x,t)$ are smooth functions such that equation (\ref{eq:SDE X}) has a unique non-explosive solution for any initial value $x_0$. We define the first passage time $T^b$ of the diffusion process $X_t$ to the curved boundary $b(t)$:   
\begin{equation}
\label{eq:tau_definition}
T^b = inf\{t >0;\, X_t \leq b(t)\}.
\end{equation}
We denote by $\mathbb{P}_{x_0}$ and $\mathbb{E}_{x_0}$ probability and expectation conditional on the process $X_t$ started at $X_0 = x_0$. We omit subscript $x_0$ if process $X_t$ starts at zero, i.e. $x_0 = 0$. Define function $u_{x_0}^b(x, t)$ as 
\begin{equation}
\label{eq: u definition}
u_{x_0}^b(x, t) = \frac{\partial } {\partial x} \mathbb{P}_{x_0} \left( X_t \leq x, \, T^b > t\right). 
\end{equation}
From standard results in probability theory the function $u_{x_0}^b(x,t)$ solves Cauchy problem with absorbing boundary condition for Fokker--Planck--Kolmogorov equation (FPKE)
\begin{eqnarray}
\label{eq:PDE_for_u}
\left\{ {\begin{array}{l}
\left( \LL - \frac{\partial }{\partial t}\right) u_{x_0}^b = 0, \quad (x,t) \in \left( b(t), \infty\right) \times \left(0, \infty\right)
\\  
u_{x_0}^b (b(t),t) = 0, \quad t \in \left[0, \infty \right)
\\ 
u_{x_0}^b(x, 0) = \delta(x - x_0), \quad x \in \left[ b(0), \infty \right) 
\end{array}} \right.
\end{eqnarray}
where $\delta(x-x_0)$ is a Dirac measure at the point $x_0$ and differential operator $\LL$ is the adjoint of $\LLinf$, which is the infinitesimal generator of $X_t$:
\begin{equation}
\label{eq: L def}
\LLinf = \frac{\sigma^2(x,t)}{2} \frac{\partial ^2}{\partial x^2 }+ 
\mu(x,t)\frac{\partial}{\partial x } ,\quad \LL= \frac{1}{2} \frac{\partial ^2}{\partial x^2 } \sigma^2(x,t) -
\frac{\partial}{\partial x } \mu(x,t).
\end{equation}
Superscript $b$ in (\ref{eq: u definition}) denotes boundary $b(t)$. If $b(t)$ is sufficiently regular, the problem (\ref{eq:PDE_for_u}) has unique solution and probability $\mathbb{P}_{x_0} (T^b > t)$ and density $\rho_{x_0}^{T^b}(t)$ of the moment $T^b$ have the following representations (we also omit subscript $x_0$ in case of $x_0 = 0$): 
\begin{equation}
\label{eq:P_tau and rho _tau def} 
\begin{array}{l}
\mathbb{P}_{x_0} (T^b > t) = \int_{b(t)}^\infty u_{x_0}^b (x, t) dx,
\\ 
\rho^{T^b}_{x_0} (t) = \frac{\partial}{\partial x} \left( \frac{\sigma^2(x,t) u_{x_0}^b(x,t)}{2} \right) |_{x = b(t)}.
\end{array}
\end{equation}
The calculation of boundary-crossing probabilities (\ref{eq:P_tau and rho _tau def}) arises in several areas such as statistical testing \cite{Ferebee, RobbinsAndSiegmund}, the valuation of barrier options \cite{GemanYor, KunimotoIkeda, RobertsShortland}, default modeling \cite{Coculescu, HullWhite},  molecular biology \cite{CharmpiYcart}. The comprehensive review of the first passage time problems can be found in \cite{FPT_Phenomena}. 

The most popular analytic techniques to analyze first passage times are martingale theory via optional sampling theorem, measure/time changes, integral equations and PDE theory. Martingale methods were used in \cite{BorovkovDownes2010, Novikov Korzakhia, Novikov Stopping times for Brownian motion}. Method of images and integral transforms are the most frequently used techniques of PDE theory. The method of images is used in \cite{Daniels, Kahale, Lerche} for Brownian Motion and in \cite{HittimeBessel} for Bessel process. If we study first passage times to a fixed level by a time-homogeneous process we can apply Laplace transform and reduce original PDE problem to the ODE problem. This technique gives the first passage time law in the series expansion form, for details see \cite{Linetsky}. Paper \cite{Peskir} shows, that first hitting time density can be found as a solution of Volterra integral equation. 

In 2005 Alili and Patie have published a paper \cite{AliliPatie05} about so-called functional transformation approach for the first passage time problems. They considered a one-parametric mapping from the space of continuous positive functions into a family of curves and established explicit formula relating the distributions of the first hitting times of each of these curves by a Brownian Motion. Later, they generalized their results to the processes having time-inversion property \cite{AliliPatie10}. In the recent paper \cite{AliliPatieLastWork} they extended one-parametric transformation to the two-parametric and proved uniqueness of proposed transform. Their proof is based on Lie symmetries for the heat equation. 

In this article we explore connections between Lie symmetries of FPKE and laws of first passage times of a curved boundary by a diffusion process. In the assumption of non-trivial Lie symmetry's existence we derive explicit boundary crossing identity characterized only by symmetry and process parameters. This result generalizes Alili and Patie transformations to the case of diffusion process such that FPKE has non-trivial Lie symmetries. Also we generalize obtained identity to the case of two different diffusion process such that their FPKE have a one specific relation similar to the Lie symmetry. The main contribution of this paper is these two explicit identities. 

Further, we focus on time-homogeneous diffusion processes. Without loss of generality we consider process with unit variance. For these processes we find necessary and sufficient conditions of symmetries' existence and derive all of Lie symmetries in explicit form.  Based on these results, we obtain two-parametric transforms and prove its uniqueness. We apply reductions to the canonical form (see \cite{Goard}) and obtain boundary crossing identities between time-homogeneous diffusion process and Brownian motion or Bessel Processes. These results generalize well-known connections between Ornstein-- Uhlenbeck process and Brownian Motion and radial Ornstein--Uhlenbeck process and Bessel process. 

The rest of paper is organized this way: in Section \ref{sec:main_results} we briefly introduce Lie symmetries method and present boundary crossing identities for general case. In Section \ref{sec:time_homogeneous_diffusion} we focus on time homogeneous diffusion processes. We show that existence of non-trivial Lie symmetry is equivalent to the condition that drift function solves one of 4 Ricatti equations. Each Ricatti equation can be solved explicitly in terms of elementary and special functions. For each family we present closed form solution of corresponded Ricatti equation, derive all symmetries in explicit form, construct two-parametric transformations and prove its uniqueness.  

\section{Boundary crossing identities and Lie symmetries. General results}
\label{sec:main_results}
\subsection{Preliminaries of Lie symmetries}
A Lie group symmetry of a differential equation is a transformation which maps solutions to solutions. Application of this technique allows us to construct complex solutions from trivial solutions. Here we briefly review the main idea of Lie method, more details can be found in book by Olver \cite{Olver}.  Let $u(x,t)$ be a solution of equation $\left(\LL - \partial_t \right) u = 0$ and $\textbf{v}$ be a vector field of the form 
\begin{equation}
\Vfield = \xi(x,t,u)\partial_x + \tau(x,t,u)\partial_t + \varphi(x,t,u)\partial_u 
\end{equation}
where $\partial_x$ denotes partial derivative $\partial / \partial_x$ etc. Functions $\xi$, $\tau$ and $\varphi$ are sought to guarantee that field $\Vfield$ generates a symmetry of PDE $\left(\LL - \partial_t \right) u = 0$. According to the Lie's theorem, vector field $\Vfield$ generates local group of symmetries if and only if its second prolongation (see Chapter 2 of \cite{Olver} for the explicit formulas of prolongations) is equal to zero:  
\begin{equation}
\label{eq:second_prolongation}
\textbf{pr}^2\Vfield\left[ \left(\LL - \partial_t \right) u  \right] = 0.
\end{equation}
Action of the vector field $\Vfield$ gives triplet $(\X, \T, \U)$ (see Chapter 1, \cite{Olver})
\begin{equation}
\label{eq:exponentiation}
e^{\epsilon \Vfield} (x,t,u) = (\X,\T,\U)
\end{equation} 
such that $(\X, \T, \U)$ solves the following ODE system
\begin{eqnarray}
\label{eq: systems to determine action of vector field}
\left\{ {\begin{array}{l}
d \X /d\epsilon = \xi(\X,\T,\U), 
\quad \X|_{\epsilon = 0} = x, 
\\
d \T / d\epsilon = \tau(\X, \T, \U), 
\quad \T|_{\epsilon = 0} = t, 
\\  
d \U / d\epsilon = \varphi(\X,\T, \U), 
\quad \U|_{\epsilon = 0} = u. 
\end{array}} \right.
\end{eqnarray}
Therefore, condition that vector field $\Vfield$ generates symmetry of differential operator $\LL - \partial_t$ is equivalent to existence of functions $f(x,t)$, $\X(x,t)$ and $\T(t)$ such that for any solution $\left(\LL - \partial_\T \right) \U =0$ the function $u(x,t)$ defined as 
\begin{eqnarray}
\label{eq:Basic_transform}
u(x,t) = f(x, t) \U \left(\X(x,t), \T(t) \right),
\end{eqnarray}
solves the equation $\left(\LL - \partial_t \right) u =0$. In order to obtain symmetries (\ref{eq:Basic_transform}) we need to solve the equation for second prolongation (\ref{eq:second_prolongation}) and after that make exponentiation (\ref{eq:exponentiation}). This steps gives us all symmetries (\ref{eq:Basic_transform}).        

\subsection{Boundary crossing identities}
In this subsection we present first main result of this paper. It is explicit identity between densities of first passage times to the different boundaries.
\label{par: main result}
\begin{thm}
	\label{thm:main}
	Suppose that equation $\left( \LL - \partial_t \right) u = 0$ has non-trivial Lie symmetry (\ref{eq:Basic_transform}) such that $\T(0) = 0$. Let $T^b$ be the first passage time of the curve $b(t)$ by the process $X_t$: 
	\begin{equation}
	T^b = \inf \left\{  t \geq 0, \, X_t \leq b(t) \right\}.
	\end{equation} 
	Let's define by $g(t)$ any solution of equation 
	\begin{equation}
		\label{eq:g_def}
		\X(g(t),t) =  b(\T).
	\end{equation}
	and by $T^g$ the corresponded stopping time
	\begin{equation}
	\label{eq:FPT_g}
		T^g = \inf \left\{  t \geq 0, \, X_t \leq g(t) \right\}.
	\end{equation} 
	The density $\rho^{T^g}_{x_0} (t)$ of the distribution of $T^g$ can be explicitly represented from the following identity: 
	\begin{equation}
	\label{eq:rho_transform}
	\frac{\rho^{T^g}_{x_0} (t)}
	     {\rho^{T^b}_{\X(x_0, 0)} (\T(t))} = 
	\frac{\sigma^2(g(t), t)}{\sigma^2(b(\T(t)),\T(t))} 
	\frac{f(g(t),t)}{f(x_0, 0)}  
	\frac{\partial \X / \partial x |_{x = g(t)}}{\Psi'(\X(x_0, 0))} .
	\end{equation} 
	Here function $\Psi(\mathscr{Y})$ is defined as a solution of the following equation
	\begin{equation}
	\label{eq:Psi_def}
		\X(\Psi(\mathscr{Y}),0) = \mathscr{Y}.
	\end{equation}
\end{thm}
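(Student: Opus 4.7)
The plan is to push the known absorbed fundamental solution with boundary $b$ through the Lie symmetry (\ref{eq:Basic_transform}) to produce an absorbed fundamental solution with boundary $g$, and then to read off the ratio (\ref{eq:rho_transform}) from the density formula in (\ref{eq:P_tau and rho _tau def}). Let $U(y,s):=u^b_{\X(x_0,0)}(y,s)$ denote the solution of (\ref{eq:PDE_for_u}) with absorbing curve $b(s)$ and initial mass at $y=\X(x_0,0)$, and set
\begin{equation}
v(x,t):=f(x,t)\,U(\X(x,t),\T(t)).
\end{equation}
By the symmetry property (\ref{eq:Basic_transform}), $v$ solves $(\LL-\partial_t)v=0$ on $\{x:\X(x,t)>b(\T(t))\}=\{x>g(t)\}$, and the absorbing condition $v(g(t),t)=0$ is automatic from (\ref{eq:g_def}) because $U$ vanishes at $y=b(\T(t))$.

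Next I would identify the initial condition. Since $\T(0)=0$, we get $v(x,0)=f(x,0)\,\delta(\X(x,0)-\X(x_0,0))$. Because $\Psi$ inverts $x\mapsto\X(x,0)$ by (\ref{eq:Psi_def}), the inverse function theorem gives $\Psi'(\X(x_0,0))=1/\partial_x\X|_{(x_0,0)}$, and the standard change-of-variables rule for Dirac masses yields
\begin{equation}
v(x,0)=f(x_0,0)\,\Psi'(\X(x_0,0))\,\delta(x-x_0).
\end{equation}
Uniqueness of the absorbed Cauchy problem (\ref{eq:PDE_for_u}) for the boundary $g$ then forces
\begin{equation}
u^g_{x_0}(x,t)=\frac{f(x,t)}{f(x_0,0)\,\Psi'(\X(x_0,0))}\,U(\X(x,t),\T(t)).
\end{equation}

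Finally I would substitute this factorisation into (\ref{eq:P_tau and rho _tau def}). Because $u^g_{x_0}(g(t),t)=0$, only $(\sigma^2(g(t),t)/2)\,\partial_x u^g_{x_0}|_{x=g(t)}$ survives; and when the chain rule is applied to the factorisation, the contribution proportional to $\partial_x f$ drops out because $U$ itself vanishes at the boundary. The surviving term is proportional to $\partial_y U|_{(b(\T(t)),\T(t))}\,\partial_x\X|_{(g(t),t)}$, while the analogous boundary-density formula for $U$ gives $\rho^{T^b}_{\X(x_0,0)}(\T(t))=(\sigma^2(b(\T(t)),\T(t))/2)\,\partial_y U|_{(b(\T(t)),\T(t))}$. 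Taking the ratio cancels $\partial_y U$ and reproduces (\ref{eq:rho_transform}) exactly. The step I expect to be most delicate is the Jacobian rule for the Dirac mass at $t=0$: identifying the prefactor as $\Psi'(\X(x_0,0))$ via the inverse function theorem is what pins down the normalisation in the denominator of (\ref{eq:rho_transform}). One must also verify in passing that $x\mapsto\X(x,t)$ is a diffeomorphism on the relevant half-line and that $g$ is well defined by (\ref{eq:g_def}), so that the pushforward problem is again a well-posed absorbed Cauchy problem and uniqueness can be invoked.
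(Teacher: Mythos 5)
Your proposal is correct and follows essentially the same route as the paper: push the absorbed fundamental solution for $b$ through the symmetry, fix the normalisation via the Dirac--delta Jacobian (which is exactly where the factor $\Psi'(\X(x_0,0))$ arises, the paper obtaining it through an equivalent convolution computation), and then apply the product rule in the boundary-density formula so that the term multiplying the vanishing solution drops out. No substantive difference.
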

\begin{proof}
	Let function $\U(\X,\T)$ be a solution of Cauchy problem 
	\begin{eqnarray}
	\left\{ \begin{array}{l}
    \left( \LL -\partial_{\T} \right) \U = 0, 
    \quad \left(\X, \T \right) \in \left(b(\T), \infty \right) \times \left(0,\infty\right)
	\\ 
	\U(b(\T),\T) = 0, 
	\quad \T  \in \left[0,\infty\right)
	\\ 
	\U(\X, 0) = \frac{\delta(\Psi(\X) - x_0)}{f(\Psi(\X), 0)}, 
	\quad \X \in \left[b(0), \infty\right). 
		\end{array} \right.
	\end{eqnarray}
	It follows that the function $u^g_{x_0}(x, t) = f(x,t) \U(\X,\T)$ solves the problem 
	\begin{eqnarray}
	\label{eq:PDE_for_u_g}
	\left\{ {\begin{array}{l}
	\left( \LL - \partial_t \right) u^g_{x_0}  = 0,  
    \quad \left(x, t\right) \in \left(g(t), \infty \right) \times \left(0, \infty\right)
	\\ 
	u^g_{x_0}(g(t),t) = 0, 
	\quad t  \in \left[0,\infty\right)
	\\ 
	u^g_{x_0}(x, 0) = \delta(x - x_0). 
	\quad x \in \left[g(0), \infty\right). 
		\end{array}} \right.
	\end{eqnarray}
	Let function $u^b_{x_0}(x, t)$ be a solution to the equation (\ref{eq:PDE_for_u}). Therefore, we can represent function $\U$ as the following convolution product:
	\begin{eqnarray}
	\nonumber
	\U \left(\X, \T \right) &=& \int_{b(0)}^{\infty} u_{\X_0}^b(\X, \T) \U(\X_0, 0) d\X_0
	\\ \nonumber
	&=& \int_{b(0)}^{\infty} \frac{u_{\X_0}^b(\X, \T) \delta(\Psi(\X_0) - x_0)}{f(\Psi(\X_0), 0)} d\X_0
	\\\nonumber
	& = & \int_{b(0)}^{\infty} \frac{u_{\X_0}^b(\X, \T) \delta(\Psi(\X_0) - x_0)}{\Psi'(X_0)f(\Psi(\X_0), 0)} d\Psi(\X_0)
	\\\nonumber
	& = & \frac{u_{\X_0}^b(\X, \T)}{\Psi'(X_0)f(\Psi(\X_0), 0)}|_{\Psi(\X_0) = x_0}
	\\ \nonumber
	&=& \frac{u_{\X(x_0,0)}^b (\X, \T)}{\Psi'(\X)|_{\X(x_0,0)} f (x_0,0)}.
	\end{eqnarray}
	After the following calculations, we get the main identity (\ref{eq:rho_transform}):
	\begin{eqnarray}
	\nonumber
	\rho^{T^g}_{x_0} (t) 
	&=& 
	\frac{1}{2} \frac{\partial}{\partial x} \left( \sigma^2(x,t) u^g_{x_0}(x,t) \right) |_{x = g(t)} 
	\\ \nonumber
	&=& 
	\frac{1}{2} \frac{\partial}{\partial x} 
	\left(\frac{\sigma^2(x,t) u^b_{\X(x_0,0)}(\X, \T) f(x,t)}{\Psi'(\X)|_{\X = \X(x_0,0)} f (x_0,0)}\right) |_{x = g(t)} 
	\\ \nonumber
	&=& 
	\frac{1}{2} \frac{\partial}{\partial x} 
	\left(\frac{\sigma^2(x,t)  \sigma^2(\X,\T)  u^b_{\X(x_0,0)}(\X, \T) f(x,t)}{\sigma^2(\X, \T) \Psi'(\X)|_{\X(x_0,0)} f (x_0,0)}\right) |_{x = g(t)}
	\\ \nonumber
	&=& 
	\frac{1}{2} \sigma^2(\X, \T) u^b_{\X(x_0,0)}(\X, \T)|_{x = g(t)} 
	\\ \nonumber
	&\cdot& 
	\frac{\partial}{\partial x} 
	\left(\frac{\sigma^2(x,t) f(x,t)}{\sigma^2(\X, \T) \Psi'(\X)|_{ \X = \X(x_0,0)} f (x_0,0)}\right) |_{x = g(t)}
	\\ \nonumber
	&+&
	\frac{1}{2} \left(\frac{\sigma^2(x,t) f(x,t)}{\sigma^2(\X, \T) \Psi'(\X)|_{\X(x_0,0)} f (x_0,0)}\right) |_{x = g(t)}
	\\ \nonumber
	&\cdot&
	\frac{\partial}{\partial x}  
	\left( \sigma^2(\X,\T) u^b_{\X(x_0,0)}(\X, \T)\right)|_{x = g(t)} 
	\\ \nonumber
	&=& \frac{1}{2} \sigma^2(\X, \T) u^b_{\X(x_0,0)}(\X, \T) |_{\X = b(\T)} 
	\\ \nonumber
	&\cdot&
	\frac{\partial}{\partial x} 
	\left(\frac{\sigma^2(x,t) f(x,t)}{\sigma^2(\X, \T) \Psi'(\X)|_{\X(x_0,0)} f (x_0,0)}\right) |_{x = g(t)} 
	\\ \nonumber
	&+&\frac{1}{2} \left(\frac{\sigma^2(x,t) f(x,t) \partial \X / \partial x}{\sigma^2(\X, \T) \Psi'(\X)|_{\X(x_0,0)} f (x_0,0)}\right) |_{x = g(t)}
	\\ \nonumber
	&\cdot&
	\frac{\partial}{\partial \X}  
	\left( \sigma^2(\X,\T) u^b_{\X(x_0,0)}(\X, \T)\right)|_{\X = b(\T)} 
	\\ \nonumber
	&=& 0 + \left(\frac{\sigma^2(x,t) f(x,t) \partial \X / \partial x}{\sigma^2(\X, \T) \Psi'(\X)|_{\X(x_0,0)} f (x_0,0)}\right) |_{x = g(t)} \rho^{T^b}_{\X(x_0,0)} (\T(t)) 
	\\ \nonumber
	&=& \frac{\sigma^2(g(t), t)}{\sigma^2(b(\T(t)),\T(t))} \frac{f(g(t),t)}{f(x_0, 0)}  \frac{\partial \X / \partial x |_{x = g(t)}}{\Psi'(\X)|_{\X(x_0, 0)}} \rho^{T^b}_{\X(x_0, 0)} (\T(t)).
	\end{eqnarray}
\end{proof}

\begin{example} We illustrate an application of the main formula (\ref{eq:rho_transform}) 
with the well-known Bachelier--L$\acute{e}$vy formula for the first passage time of a Brownian motion to the slopping line $g(t) = a + b t$. We present two transformations of the density corresponds to the straight line, i.e. $b(t) = a$,  $\rho^{T^b} = \frac{|a|}{\sqrt{2\pi t^3}} e^{-\frac{a^2}{2t}}$. Recall all one-parametric symmetry for the heat equation(see Chapter 2 in \cite{Olver}):  
\begin{eqnarray}
\label{eq:symmetries_for_the_heat_eq}
\left. { \begin{array}{l}
\Sym^{\epsilon}_1: \quad u(x,t) = \frac{1}{\sqrt{1+ \epsilon t}}e^{\frac{-\epsilon  x^2}{2(1+ \epsilon t)}}  \U\left(\frac{x}{1+\epsilon t }, \frac{t}{1+\epsilon t } \right)  ,  
\\ 
\Sym^{\epsilon}_2: \quad u(x,t) =  \U(e^{\epsilon}x, e^{2\epsilon} t), 
\\ 
\Sym^{\epsilon}_3: \quad u(x,t) = e^{-\epsilon x + \epsilon^2 t / 2}\U(x -\epsilon t, t), 
\\ 
\Sym^{\epsilon}_4: \quad u(x,t) = \U(x+\epsilon, t) 
\\ 
\Sym^{\epsilon}_5: \quad u(x,t) = \U(x, t+ \epsilon), 
\\ 
\Sym^{\epsilon}_6: \quad u(x,t) = e^\epsilon \U(x, t), 
\\ 
\Sym_\beta: \quad u(x,t) =  \beta(x,t) + \U\left(x, t \right), 	
	\end{array}} \right.
\end{eqnarray}
where function $\beta(x,t)$ is an arbitrary solution of equation $\left(\LL - \partial_t \right)\beta = 0$.  The probabilistic interpretation of any symmetry from (\ref{eq:symmetries_for_the_heat_eq}) can be found in \cite{AliliPatieLastWork}.  Application of symmetry $\Sym^{\epsilon}_4$ with $\epsilon = b$ yields
\begin{equation}
u(x,t) = e^{-a x + b ^ 2 t / 2} \U(x - a t, t). 
\end{equation}
New boundary $g(t)$ solves the equation $\X(g(t), t) = g(t) - bt = a$. If we apply formula (\ref{eq:rho_transform}) we get ($f(0, 0) = 1$, $d \X /dx \equiv 1$, $\Psi' \equiv 1$)   
\begin{eqnarray}
\label{eq:Bachelier_Levy_formula}
\rho^{T^g} (t) &=& \frac{|a}{\sqrt{2\pi t^3}} e^{-\frac{a^2}{2t}}
e^{-b (a + b t) + b ^ 2 t / 2} 
\\ \nonumber
&=& \frac{|a|}{\sqrt{2\pi t^3}} e^{-\frac{a^2}{2t} -ab- \frac{b ^ 2 t}{2}}. 
\end{eqnarray}
Now we consider the superposition of symmetries $\Sym^{b/a}_5 \circ \Sym^{b/a}_1$:
\begin{equation}
u(x,t) =  
\frac{e^{-\frac{x^2}{2(t + a / b) }}}{\sqrt{t + a/ b}} 
\U \left(\frac{x}{t + a / b}, \frac{t} {(t + a /b) a / b }\right). 
\end{equation}
We apply this symmetry to the boundary $b(t) = b$ ($\rho^{T^b} = \frac{|b|}{\sqrt{2\pi t^3}} e^{-\frac{b^2}{2t}}$). In the result we get $g(t) = (t + a / b) b(\T(t)) = a + b t$.  For the density $\rho^{T^g}(t)$ we have
\begin{eqnarray}
\rho^{T^g} (t) &=& \frac{\sqrt{b / a}}{\sqrt{(t + a/ b)^3}}  e^ {-\frac{ b^2 t + ab}{2}}\rho^{T^b}\left(\frac{t} {(t + a/b) a / b }\right) 
\\ \nonumber
&=& \frac{|a|}{\sqrt{2\pi t^3}} e^{-\frac{a^2}{2t} - a b - \frac{b ^ 2 t}{2}} . 
\end{eqnarray}
\end{example}

\begin{remark}
Main transform from \cite{AliliPatie05} can be easily derived by the application of Theorem \ref{thm:main} with symmetry $\Sym^{\beta}_{1}$ from (\ref{eq:symmetries_for_the_heat_eq}). If we apply Theorem \ref{thm:main} with symmetry $\Sym^{\epsilon}_{4}$ we obtain first hitting time laws by Brownian motion with drift (i.e. Girsanov transform).  
\end{remark}

\begin{remark}
	In this paper we apply identity (\ref{eq:Basic_transform}) to derive closed form formulas for the first passage time laws. However, obtained identity can be useful in numerical context. We can use it as a benchmark of any numerical methods.
\end{remark}

\subsection{Boundary crossing identities for different processes}
In this section we introduce another diffusion process $Y = \left\{Y_t, \,t \geq 0 \right\}$ defined by SDE 
\begin{equation}
dY_t = \nu \left(Y_t, t \right) dt + \Sigma \left( Y_t, t \right) dW_t, \quad Y_0 = y_0,
\end{equation} 
Let $T^b$ and $T^g$ also be the following stopping times of the processes $Y_t$ and $X_t$ started from $\X(x_0, 0)$ and $x_0$ respectively 
\begin{equation}
\label{eq:tau_b_and_tau_g_diff_process}
T^b = \inf \left\{  t \geq 0, \quad Y_t \leq b(t) \right\}, \quad 
T^g = \inf \left\{  t \geq 0, \quad X_t \leq g(t) \right\}.
\end{equation}
Next Theorem provides explicit connection formulas between distributions of $T_Y^b$ and $T^g$. 
\begin{thm}
	\label{thm:main_ch_mes}
	Suppose that solutions of equations $\left(\LL - \partial_t\right) u = 0$  and  $\left(\GG - \partial_\T\right) \U = 0$ have relation (\ref{eq:Basic_transform}). Here $\GGinf$ is infinitesimal generator of the process $Y_t$. Let $b(t)$ and $g(t)$ be smooth functions having relation (\ref{eq:g_def}). Then, the boundary crossing identity between densities of stopping moments (\ref{eq:tau_b_and_tau_g_diff_process}) can be represented in the form (\ref{eq:rho_transform}). 
\end{thm}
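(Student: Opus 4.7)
The strategy is to re-run the proof of Theorem \ref{thm:main} nearly verbatim, observing that the only structural role played by the Lie-symmetry hypothesis in that argument was the implication: \emph{if $\U$ solves the PDE on the target side, then $f\cdot\U(\X,\T)$ solves the PDE on the source side}. The hypothesis of Theorem \ref{thm:main_ch_mes} asserts exactly this intertwining between the two generically different generators $\GG$ and $\LL$, so the whole construction of the previous proof carries over.

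First, I would introduce $\U(\X,\T)$ as the unique solution of the absorbing-boundary Cauchy problem
\begin{equation*}
(\GG-\partial_\T)\U=0,\qquad \U(b(\T),\T)=0,\qquad \U(\X,0)=\frac{\delta(\Psi(\X)-x_0)}{f(\Psi(\X),0)},
\end{equation*}
on $(b(\T),\infty)\times(0,\infty)$. Since the killed transition density $u^b_{y_0}(\X,\T)$ of $Y_t$ is the Green's function of $(\GG-\partial_\T)$ with absorption at $b$, $\U$ can be written as the convolution of $u^b_{\,\cdot\,}(\X,\T)$ against this initial datum. The Dirac mass, together with the change of variable $\X_0\mapsto\Psi(\X_0)$ and the defining property (\ref{eq:Psi_def}), collapses the integral exactly as in the proof of Theorem \ref{thm:main} to
\begin{equation*}
\U(\X,\T)=\frac{u^b_{\X(x_0,0)}(\X,\T)}{\Psi'(\X(x_0,0))\,f(x_0,0)}.
\end{equation*}

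Next, apply the intertwining hypothesis (\ref{eq:Basic_transform}) in the direction $\GG\to\LL$: the function $u^g_{x_0}(x,t):=f(x,t)\,\U(\X(x,t),\T(t))$ solves $(\LL-\partial_t)u^g_{x_0}=0$. Its boundary vanishing at $x=g(t)$ follows from (\ref{eq:g_def}) combined with $\U(b(\T),\T)=0$, while its initial condition reduces to $\delta(x-x_0)$ thanks to $\Psi(\X(x,0))=x$ and the cancellation $f(x,0)/f(\Psi(\X(x,0)),0)=1$. By uniqueness of the killed Cauchy problem for $X_t$, this $u^g_{x_0}$ is the absorbed transition density of $X_t$ at the boundary $g$. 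Substituting the representation into $\rho^{T^g}_{x_0}(t)=\tfrac{1}{2}\partial_x\!\left(\sigma^2(x,t)u^g_{x_0}(x,t)\right)|_{x=g(t)}$ and expanding by the product rule reproduces the computation of Theorem \ref{thm:main} line by line: the term in which $\partial_x$ lands on the prefactor is annihilated by $\U(b(\T),\T)=0$, and the surviving term uses $\partial_x = (\partial\X/\partial x)\partial_\X$ and the definition of $\rho^{T^b}$ for $Y_t$, yielding formula (\ref{eq:rho_transform}) with the understanding that the denominator diffusion coefficient is now $\Sigma^2$ (the coefficient of $Y_t$) rather than $\sigma^2$.

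The main obstacle is conceptual rather than computational: one must audit the Theorem \ref{thm:main} argument to confirm that it never used the source and target PDEs being the \emph{same} equation. What was really used is only the intertwining $u=f\cdot\U(\X,\T)$ together with the probabilistic interpretations of both killed Green's functions as densities with respect to their own FPKEs. Once this audit is done, replacing the Lie-symmetry assumption by the weaker two-operator intertwining assumption costs nothing in the algebra, and the identity follows.
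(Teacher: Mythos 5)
Your proposal is correct and follows exactly the route the paper intends: the paper's own proof of Theorem \ref{thm:main_ch_mes} consists of the single remark that it is ``fully analogical'' to the proof of Theorem \ref{thm:main}, and you have simply carried out that analogy explicitly, correctly identifying that the only property of the Lie symmetry used in Theorem \ref{thm:main} is the intertwining $u=f\cdot\U(\X,\T)$ between the two (now distinct) operators. Your observation that the denominator diffusion coefficient in (\ref{eq:rho_transform}) must be read as $\Sigma^2$ of the process $Y_t$ is a legitimate and useful clarification that the paper leaves implicit.
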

\begin{proof}
	Proof is fully analogical to the proof of Theorem \ref{thm:main}.
\end{proof}

\begin{example}
We also illustrate an application of Theorem \ref{thm:main_ch_mes} with Bachelier--L$\acute{e}$vy formula. Let $X_t$ be a Brownian motion with negative drift $-b$ and $Y_t$ be a standard Brownian Motion, i.e
\begin{equation}
\LL = \frac{1}{2} \frac{\partial^2}{\partial x^2} + b \frac{\partial }{\partial x}, \quad 
\GG = \frac{1}{2} \frac{\partial^2}{\partial x^2}.  
\end{equation}
After application of formula (\ref{eq:rho_transform}) with $\X \equiv x$, $\T \equiv t$, $f(x,t) = e^{-b x - b^2 t  /2}$, $b(t) \equiv g(t) \equiv a$ and $x_0 = 0$ we get formula (\ref{eq:Bachelier_Levy_formula}).
\end{example}

\section{Time-homogeneous diffusion processes}
\label{sec:time_homogeneous_diffusion}
Now we turn to the time-homogeneous diffusion processes, i.e. $\mu(x, t) \equiv \mu(x)$ and $\sigma(x, t) \equiv \sigma(x)$. Without loss of generality, we assume that $\sigma(x) \equiv 1$. In the case of $\sigma(x) \neq 1$  we can apply Lamberti transform (see e.g. \cite{BorovkovDownes2010}): 
\begin{eqnarray}
\label{eq:Lamperti_transform}	 
\left. { \begin{array}{l}
dU_t = \nu(U_t) dt + \sigma(U_t) dW_t, \quad
F(y) = \int_{y_0}^y \frac{du}{\sigma(u)}, \quad 
X_t = F(U_t); 
\\
dX_t = \mu(X_t) dt + dW_t, 
\quad \mu(y) = \left( \frac{\nu}{\sigma} - \frac{\sigma'}{2} \right) \circ F^{-1}(y).
\end{array}} \right.
\end{eqnarray}
Differential operator $\LL$ is simplified to
\begin{equation}
\label{eq:time_homogeneous_L}
\LL = \frac{1}{2} \frac{\partial^2}{\partial^2 x} - 
\mu(x) \frac{\partial}{\partial x} - \mu'(x). 
\end{equation}
Next Theorem provides necessary and sufficient conditions of non-trivial symmetry's existence. 
\begin{thm}
	\label{thm:Lie_symm_existence}
	Differential Equation $(\LL  - \partial_t)u = 0$ where $\LL$ defined in (\ref{eq:time_homogeneous_L}) has non-trivial Lie symmetry (\ref{eq:Basic_transform}) if and only if drift function $\mu(x)$ is a solution of one of the following Ricatti equations:
	\begin{eqnarray}
	\label{eq:Ricatti_equations}
	\left. { \begin{array}{l}
	\mathscr{F}_1 \, : \,  \mu' + \mu^2 = 4 \B x  + 2\C 
	\\ 
	\mathscr{F}_2 \, : \,  \mu' + \mu^2 = \A x^2 + 4 \B x  + 2\C 
	\\ 
	\mathscr{F}_3 \, : \,  \mu' + \mu^2 = 2\C + \D / x^2 
	\\ 
	\mathscr{F}_4 \, : \,  \mu' + \mu^2 = \A x^2 + 2\C + \D / x^2
\end{array} } \right.
	\end{eqnarray} 		
	where $\A$, $\B$, $\C$ and $\D$ are arbitrary constants.
\end{thm}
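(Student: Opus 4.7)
The plan is to first reduce the FPKE to a Schr\"odinger-type equation via a point transformation, and then classify admissible potentials via Lie's infinitesimal criterion. For the reduction, I would perform the gauge change $u(x,t) = e^{M(x)} v(x,t)$ with $M'(x) = \mu(x)$. A direct computation shows that $(\LL - \partial_t) u = 0$ is equivalent to
\begin{equation*}
v_t = \tfrac{1}{2} v_{xx} - V(x)\, v, \qquad V(x) = \tfrac{1}{2}\bigl(\mu'(x) + \mu^2(x)\bigr).
\end{equation*}
Since this is an invertible point transformation, it induces an isomorphism of Lie symmetry algebras, and the theorem is equivalent to showing that the displayed equation admits a non-trivial symmetry in the sense of (\ref{eq:Basic_transform}) if and only if $V(x) = \tfrac12 A x^2 + 2Bx + C + \tfrac12 D x^{-2}$ with $B = 0$ whenever $D \neq 0$. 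The four families $\Ffirst$--$\Ffourth$ then arise by pulling back this condition via $V = \tfrac12(\mu'+\mu^2)$ and identifying constants.

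For the Schr\"odinger classification, I apply the prolongation condition (\ref{eq:second_prolongation}) with $\Vfield = \xi\partial_x + \tau\partial_t + \varphi\partial_v$. Linearity of the PDE in $v$ forces $\varphi = \alpha(x,t)v + \beta(x,t)$ with $\beta$ an arbitrary solution (the additive symmetry, which I discard). The determining equations obtained from the coefficients of $v_{xt}$, $v_x v_{xx}$, $v_x^3$ and $v_{xx}$ successively give $\tau = \tau(t)$ and $\xi_x = \tfrac12 \tau'(t)$, hence
\begin{equation*}
\xi(x,t) = \tfrac12 \tau'(t)\, x + \rho(t), \qquad \alpha(x,t) = -\tfrac14 \tau''(t)\, x^2 - \rho'(t)\, x + \gamma(t),
\end{equation*}
the expression for $\alpha$ coming from the coefficient of $v_x$ after elimination of $v_t$ via the PDE. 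The single remaining determining equation, from the coefficient of $v$, is the master compatibility identity
\begin{equation*}
\bigl(\tfrac12 \tau'(t)\, x + \rho(t)\bigr) V'(x) + \tau'(t)\, V(x) = \tfrac14 \tau'''(t)\, x^2 + \rho''(t)\, x - \gamma'(t) - \tfrac14 \tau''(t).
\end{equation*}

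The final step is to analyze this master equation. For a non-trivial symmetry (beyond time translation and $v$-scaling), the functions $\tau', \tau''', \rho, \rho'', \gamma'$ cannot all reduce to constants simultaneously, so in a neighbourhood of $t=0$ they may be treated as independent time-dependent parameters. Separating the coefficient of each such parameter yields an overdetermined ODE system in $x$ for $V$, whose general solution is exactly $V(x) = \tfrac12 A x^2 + 2Bx + C + \tfrac12 D x^{-2}$. Moreover, when $D \neq 0$ the $x^{-3}$-balance forces $\rho \equiv 0$, after which the $x^1$-balance reduces to $3B\tau'(t) = 0$; this in turn forces $B = 0$, because otherwise $\tau$ is constant and only the trivial time-translation survives. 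This is precisely why the $Bx$ term is absent in $\Fthird$ and $\Ffourth$. The converse is direct: in each of the four cases one verifies that the linear ODE system for $(\tau, \rho, \gamma)$ admits genuinely non-constant solutions, producing explicit non-trivial generators. The main obstacle is the careful bookkeeping of the coefficient-by-coefficient separation of the master equation and the justification that $\tau', \tau''', \rho, \rho'', \gamma'$ can be varied independently over a neighbourhood of $t = 0$; once this is settled, the four-case classification together with the pullback $V = \tfrac12(\mu'+\mu^2)$ give precisely the Riccati equations (\ref{eq:Ricatti_equations}).
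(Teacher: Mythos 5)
Your proposal is correct and takes essentially the same route as the paper: the same form of the infinitesimals ($\xi$ linear in $x$ with slope $\tfrac12\tau'$, $\alpha$ quadratic in $x$), the same master compatibility identity, and the same separation by powers of $x$ forcing $\mu'+\mu^2 = \A x^2 + 4\B x + 2\C + \D/x^2$ together with the constraint $\D\rho = 0$ (hence $\B=0$ when $\D\neq 0$). The only difference is presentational: you derive the determining system directly for the gauge-equivalent Schr\"odinger form $v_t = \tfrac12 v_{xx} - Vv$, whereas the paper imports it from Craddock--Lennox for the drift form; under $V = \tfrac12(\mu'+\mu^2)$ your master equation is exactly the paper's equation (\ref{eq: Base for time homogeneous case}).
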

\begin{proof}	
	Craddock and Lennox considered in \cite{CraddockLennox} Lie symmetries for the equation
	\begin{equation}
	\label{eq:craddock_lennox_eq}
	\sigma x^\gamma \frac{\partial^2 u}{\partial x} + f(x) \frac{\partial u}{\partial x} - g(x)u - \frac{\partial u}{\partial t} = 0. 
	\end{equation}
	They proved that if $f(x)$ solves Ricatti equation with coefficients depending on function $g$, then the operator (\ref{eq:craddock_lennox_eq}) has non-trivial Lie symmetries. Differential operator $\LL - \partial_t$ is the special case of (\ref{eq:craddock_lennox_eq}) with specifications $\gamma = 0$, $\sigma = 1/2$, $f(x) = -\mu(x)$ and $g(x) = \mu'(x)$. For readers' convenience we present proof from \cite{CraddockLennox} with our specifications.  According to (\cite[p.~129]{CraddockLennox}) components of vector field $\Vfield$ allow the following representation
	\begin{eqnarray}
	\left. { \begin{array}{c}
	\label{eq: field for time homogeneous}
	\xi(x,t) = \frac{x}{2} \tau_t + \rho, \quad\quad \varphi(x,t, u) = \alpha(x,t) u + \beta(x,t), 
	\\ 
	\alpha(x,t) = -\frac{x^2}{4} \tau_{tt} - x\rho_t +\mu(x) \rho + \frac{x}{2} \mu(x) \tau_t + \eta,
	\end{array} } \right.
	\end{eqnarray} 
	where $\tau$, $\rho$ and $\eta$ are functions of $t$ only and solve the equation
	\begin{equation}
	\label{eq: Base for time homogeneous case}
	-\frac{x^2}{4} \tau_{ttt} - x\rho_{tt} + \eta_t = -\frac{\tau_{tt}}{4} - L(x) \tau_t + K(x) \rho.
	\end{equation}
	Functions $K(x)$ and $L(x)$ are defined analogical to \cite[p.~129, formula 2.3]{CraddockLennox}
	\begin{equation}
	K(x) = -\frac{\left(\mu' + \mu^2 \right)'}{2}, \quad\quad 
	L(x) = \frac{x}{4} \left( \mu' + \mu^2 \right)' + \frac{\mu' +\mu^2}{2}.
	\end{equation}
	Equation (\ref{eq: Base for time homogeneous case}) has non-trivial solution if and only if function $L(x)$ is quadratic, i.e.
	\begin{equation}
	\frac{x}{4} \left( \mu' + \mu^2 \right)' + \frac{1}{2} \left( \mu' +\mu^2\right) = \A x^2 + 3 \B x + \C.
	\end{equation} 
	This ODE can be simply solved by standard techniques:
	\begin{eqnarray}
	\nonumber
	\mu' + \mu^2 &=& \frac{\D}{x^2} + \frac{1}{x^2} \int \left( 4 \A x^3 + 12 \B x^2 + 4\C x\right) dx
	\\ \nonumber
	&=& \A x^2 + 4 \B x + 2 \C + \frac{\D}{x^2}.
	\end{eqnarray}
	Functions $K(x)$ and $L(x)$ are equal to:
	\begin{equation}
	\label{eq: Explicit K and L}
	K(x) = -\left(\A x + 2 \B  - \frac{\D}{x^3} \right), \quad \quad L(x) = \A x^2 + 3 \B x + \C.
	\end{equation}
	Substitution of (\ref{eq: Explicit K and L}) in (\ref{eq: Base for time homogeneous case}) yields
	\begin{equation}
	\label{eq: Parameterized time Hom Base}
	-\frac{x^2}{4} \tau_{ttt} - x\rho_{tt} + \eta_t = -\frac{\tau_{tt}}{4} - \left(Ax^2 + 3 \B x + C \right) \tau_t - \left(Ax + 2 \B - \frac{D}{x^3} \right)\rho.
	\end{equation}
	Comparison of the corresponding powers turns out to the following ODE system
	\begin{eqnarray}
	\label{eq:main_system_for_symmetries}
	\left\{ {\begin{array}{l}
	\tau_{ttt} - 4 \A \tau_t = 0 
	\\ 
	\rho_{tt} - \A \rho =  3 \B \tau_t 
	\\ 
	\eta_t = -\left(\frac{1}{4}\tau_{tt} + \C \tau_t + 2 \B \rho \right) 
	\\ 
	\D \rho = 0
	\end{array}} \right.
	\end{eqnarray}
	It is easy to see that condition $\D \neq 0$ is equivalent to $\B=0$. Then, we have two families of Ricatti equations for $\mu$:
	\begin{eqnarray}
	\nonumber
	\begin{array}{l}		
	\mu' + \mu^2 = \A x^2 + 4\B x  + 2\C, 
	\\
	\mu' + \mu^2 = \A x^2 + 2\C + \frac{\D}{x^2}. 
	\end{array}
	\end{eqnarray} 		
	For convenience we arrange it into 4 families (\ref{eq:Ricatti_equations}).        
\end{proof}
\begin{remark}
It is easy to check that backward Kolmogorov equation for time-homogeneous process (\ref{eq:Lamperti_transform}) 
\[
\left(\LLinf + \frac{\partial }{\partial t} \right) u = 
\left(
\frac{\partial^2}{\partial x^2} + \mu(x) \frac{\partial }{\partial x} +
\frac{\partial }{\partial t} \right) u = 0
\]
has the same necessary and sufficient conditions (\ref{eq:Ricatti_equations}) of Lie symmetries' existence. However, the symmetries are different.
\end{remark}
\begin{remark}
	It is easy to see that for drift functions $\mu \in \Ffirst$ or $\mu \in \Fsecond$ FPKE has 6 independent groups of symmetries. It is known that second order parabolic equation with six group of symmetries can be reduced to the heat equation (see e.g. \cite{Goard}). This means that we can apply Theorem \ref{thm:main_ch_mes} with a standard Brownian Motion $W_t$. 
	
	At the same time, FPKE for the process $X_t$ with drift $\mu \in \Fthird$ or $\mu \in \Ffourth$ has 4 groups of symmetries. This means that FPKE can be reduced to the equation 
	\[
	\frac{1}{2} \frac{\partial^2 u}{\partial x^2} - \frac{Q}{x^2} u 
	- \frac{\partial u}{\partial t} =0.
	\]
	Therefore, we can apply Theorem \ref{thm:main_ch_mes} and set $Y_t$ is equal to the $\delta$-dimensional Bessel process $Z_t^\delta$:
	\begin{equation}
	\label{eq:Bessel_process_def}
		dZ_t^\delta = \frac{\delta - 1}{2 Z_t^\delta} dt + dW_t.	
	\end{equation}
\end{remark}
Any Ricatti equation from (\ref{eq:Ricatti_equations}) can be explicitly solved. We find solution $\mu(x)$ in the form 
\begin{equation}
\label{eq:mu_theta_link}
\mu(x) = \ln' \left(\THn(x) \right), \quad \THn(x) = e^{\int \mu(x) dx}, \quad 
\mu \in \Fn, \quad n = 1..4,
\end{equation}
function $\THn(x)$ makes linearization of Ricatti equation  
\begin{equation}
\label{eq:theta}
\THn'(x) - \left(\mu'(x) + \mu^2(x) \right) \THn(x) = 0.
\end{equation}
Therefore, we can represent diffusion processes $X_t$ in the following form 
\begin{equation}
\label{eq:SDE_mu}
dX_t = \frac{\THn'\left(X_t\right)}{\THn\left(X_t\right)} dt + dW_t, \quad X_0= x_0, \quad n = 1..4.
\end{equation}
In next subsections we consider in the detail each family, derive closed form solutions, obtain non-trivial Lie symmetries, construct two-parametric transforms and prove its uniqueness.
\subsection{Drift functions from $\Ffirst: \mu^2 + \mu' = 4 \B x + 2 \C$}
Consider the process $X_t$ of the form (\ref{eq:SDE_mu}) and set $n = 1$.
\begin{proposition}
Solution of equation (\ref{eq:theta}) for $n=1$ is the following function
\begin{equation}
\label{eq:mu_F1}
\THfirst(x) = \left\{ {\begin{array}{l}
	c_1 Ai\left(\sqrt[3]{4\B}\left[ x + \C/2\B \right] \right) +
	c_2 Bi\left(\sqrt[3]{4\B}\left[ x + \C/2\B \right] \right), \\
	c_1 \sinh\left(\sqrt{2C} x\right) + c_2 \cosh\left(\sqrt{2C} x\right), \quad \B = 0,
\\
c_1 x + c_2, \quad \B=\C = 0,
\end{array}} \right.
\end{equation} 
where $Ai(z)$ and $Bi(z)$ are Airy functions, for definition and properties see \cite{AbraStegun}.
\end{proposition}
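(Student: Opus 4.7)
The plan is to linearize the Riccati equation $\mu'+\mu^2 = 4\B x + 2\C$ using the substitution $\mu = \THfirst'/\THfirst$ introduced in (\ref{eq:mu_theta_link}), solve the resulting second order linear ODE by case analysis on $(\B,\C)$, and then identify the three branches of the piecewise formula (\ref{eq:mu_F1}).

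First I would observe that setting $\mu = \THfirst'/\THfirst$ gives $\mu' = \THfirst''/\THfirst - (\THfirst'/\THfirst)^2 = \THfirst''/\THfirst - \mu^2$, so the Riccati equation $\mu'+\mu^2 = 4\B x + 2\C$ is equivalent to the linear second-order ODE
\begin{equation*}
\THfirst''(x) - (4\B x + 2\C)\,\THfirst(x) = 0,
\end{equation*}
which is exactly the correct form of the linearization (\ref{eq:theta}) for $n=1$. The rest of the proof is then pure ODE: we only need to exhibit a fundamental system of solutions to this equation in each of the three regimes, since the general solution determines $\mu$ via $\mu = \ln'(\THfirst)$ up to the arbitrary constants $c_1,c_2$ allowed by (\ref{eq:mu_theta_link}).

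Next I split into cases. If $\B=\C=0$, the equation reduces to $\THfirst''=0$, and the general solution is the affine function $c_1 x + c_2$. If $\B=0$ and $\C\neq 0$, the equation becomes $\THfirst'' - 2\C\,\THfirst = 0$, a constant coefficient ODE with general solution $c_1\sinh(\sqrt{2\C}\,x) + c_2\cosh(\sqrt{2\C}\,x)$ (interpreting $\sqrt{2\C}$ as a complex number when $\C<0$, which recovers the trigonometric branch). Finally, if $\B\neq 0$, I would apply the affine change of variable $z = \sqrt[3]{4\B}\,(x + \C/(2\B))$; a direct computation gives $d^2\THfirst/dx^2 = (4\B)^{2/3}\,d^2\THfirst/dz^2$ and $4\B x + 2\C = (4\B)^{2/3} z$, so the ODE becomes the classical Airy equation $\THfirst_{zz}= z\,\THfirst$, whose fundamental system is $\{\mathrm{Ai}(z),\mathrm{Bi}(z)\}$. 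Substituting back in $x$ yields the first branch of (\ref{eq:mu_F1}).

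There is no real obstacle here: the statement is essentially a bookkeeping exercise once the linearization is done correctly. The only point that warrants care is the Airy scaling, since an error in the constant $(4\B)^{1/3}$ or in centering at $x=-\C/(2\B)$ would spoil the identification with the Airy equation; I would therefore verify the substitution explicitly. The cube root $\sqrt[3]{4\B}$ is understood as a fixed branch (either is admissible, since $\mathrm{Ai}$ and $\mathrm{Bi}$ are entire), and the two arbitrary constants $c_1,c_2$ in each branch encode the two-parameter freedom of the second order linear ODE, matching the statement of the proposition.
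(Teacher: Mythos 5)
Your proposal is correct and follows essentially the same route as the paper: linearize the Riccati equation via $\mu=\THfirst'/\THfirst$ (correctly reading (\ref{eq:theta}) as the second-order equation $\THfirst''-(\mu'+\mu^2)\THfirst=0$, despite the typographical first derivative there) and solve the resulting linear ODE. The only difference is that the paper disposes of the linear ODE by citing formulas 1 and 2 of Appendix 4 in Borodin--Salminen, whereas you carry out the affine rescaling to the Airy equation and the constant-coefficient cases explicitly; your verification of the scaling $z=\sqrt[3]{4\B}\,(x+\C/2\B)$ is accurate and makes the proof self-contained.
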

\begin{proof}
	Apply formulas 1 and 2 with $\lambda = \C$, $\gamma = 2\B$ from \cite[~ Appendix 4]{BorodinSalminen}.
\end{proof}
\begin{remark}
The case $\B =\C= 0$ corresponds to the standard Brownian Motion ($c_1 =0$) and 3-dimensional Bessel process ($c_2 = 0$). 
\end{remark}
Now we are ready to obtain all groups of symmetries for FPKE. 
\begin{proposition}
\label{prop:symm_F1}
Let $X_t$ be a solution of (\ref{eq:SDE_mu}) for $n=1$. FPKE has the following Lie symmetries only 
\begin{eqnarray}
\label{eq:explicit_symm_F1}
\left. { \begin{array}{l}
\Sym^{\epsilon}_1: \quad 
\frac{u(x,t)}{\U \left(\X_{\Sym_{1}}, \T_{\Sym_{1}} \right)} =  
\frac{\THfirst \left( x\right)} 
     {\THfirst\left( \X_{\Sym_{1}} \right)}
\frac{
	e^{- \frac{(x + \B t^2)^2}{2 t}+\frac{2\B^2 t^3}{3} - \C t}}{
	e^{- \frac{(\X_{\Sym_{1}} + \B \T_{\Sym_{1}}^2)^2}{2 \T_{\Sym_{1}}}+\frac{2\B^2 \T_{\Sym_{1}}^3}{3} - \C \T_{\Sym_{1}}}}
\frac{t^{-1/2}}{\T_{\Sym_{1}}^{-1/2}}
\\ 
\Sym_2^{\epsilon}: \quad 
\frac{u(x,t)}{\U \left(\X_{\Sym_{2}}, \T_{\Sym_{2}} \right)} = \frac{\THfirst \left( x\right)}{\THfirst \left( \X_{\Sym_{2}} \right)} 
\frac{e^{-2\B x t  + \frac{2 \B^2 t^3}{3}- \C t}}
{e^{-2\B \X_{\Sym_{2}} \T_{\Sym_{2}}  + \frac{2 \B^2 \T_{\Sym_{2}}^3}{3}- \C \T_{\Sym_{2}}}}
\\ 
\Sym_3^{\epsilon}: \quad 
\frac{u(x,t)}{\U(x - \epsilon t, t)} = \frac{\THfirst\left(x\right)}{\THfirst\left(x - \epsilon t\right)} \frac{e^{-\frac{(x + \B t^2)^2}{2t}}}{e^{-\frac{(x - \epsilon t + \B t^2)^2}{2t}}},
\\ 
\Sym_4^{\epsilon}: \quad 
\frac{u(x,t)}{\U(x - \epsilon, t)} = 
\frac{\THfirst\left(x\right)}{\THfirst\left(x - \epsilon\right)} 
\frac{e^{-2\B x t}}{e^{-2\B \left(x - \epsilon\right) t}},
\\ 
\Sym_5^{\epsilon}: \quad 
\frac{u(x,t)}{\U(x, t + \epsilon)} = 1, 
\\ 
\Sym_6^{\epsilon}: \quad
\frac{u(x,t)}{\U(x, t)} = e^\epsilon
\\ 
\Sym_\beta: \quad 
u(x,t) =  \beta(x,t) + \U\left(x, t \right)
\end{array} }\right.
\end{eqnarray}
Here $\X_{\Sym_{1}}$, $\T_{\Sym_{1}}$, $\X_{\Sym_{2}}$, $\T_{\Sym_{2}}$ are defined by the following formulas
\begin{equation}
\left. { \begin{array}{c}
\X_{\Sym_{1}} = \frac{x - \B t^2 }{1+\epsilon t}  + \frac{\B t^2 }{(1+\epsilon t)^2}, \quad \T_{\Sym_{1}} = \frac{t}{1 + \epsilon t},
\\
\X_{\Sym_{2}} = \left(x - \B t^2 \right)e^{-\epsilon/2} + \B t^2  e^{-2\epsilon}, 
\quad  \T_{\Sym_{2}} = te^{-\epsilon}. 
\end{array} } \right.
\end{equation}
\end{proposition}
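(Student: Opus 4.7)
The plan is to specialize the analysis from the proof of Theorem \ref{thm:Lie_symm_existence} to the family $\Ffirst$ (where $\A = \D = 0$), solve the resulting linear ODE system for the infinitesimal generators, identify a basis of the symmetry algebra, and finally exponentiate each basis element via system (\ref{eq: systems to determine action of vector field}) to obtain the finite transformations in the form (\ref{eq:Basic_transform}).

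First I would substitute $\A = \D = 0$ directly into (\ref{eq:main_system_for_symmetries}). The first equation collapses to $\tau_{ttt} = 0$, giving $\tau(t) = a_0 + a_1 t + a_2 t^2$. Integrating $\rho_{tt} = 3\B \tau_t = 3\B(a_1 + 2 a_2 t)$ yields $\rho(t) = b_0 + b_1 t + \tfrac{3\B a_1}{2}t^2 + \B a_2 t^3$, and the third equation determines $\eta$ as a polynomial in $t$ up to one constant of integration $c_0$. Since $\D = 0$ the fourth equation holds automatically. The non-trivial symmetry algebra is therefore exactly six-dimensional, parameterised by $(a_0,a_1,a_2,b_0,b_1,c_0)$, while linearity of the PDE contributes the additive solution-symmetry $\Sym_\beta$, matching the count in (\ref{eq:explicit_symm_F1}).

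Next I would turn on one parameter at a time and exponentiate. The assignments $c_0 = 1$, $a_0 = 1$, $b_0 = 1$, and $b_1 = 1$ produce vector fields with either $\tau \equiv 0$ or $\tau$ constant, so the $\X$ and $\T$ equations in (\ref{eq: systems to determine action of vector field}) decouple and integrate immediately to give respectively the scaling, time translation, space translation, and Galilean-type shift appearing in $\Sym_6^\epsilon$, $\Sym_5^\epsilon$, $\Sym_4^\epsilon$, $\Sym_3^\epsilon$. For each of these the $\U$ equation is linear in $\U$ with coefficient $\alpha(\X,\T)$ given by (\ref{eq: field for time homogeneous}); integrating $d\U/d\epsilon = \alpha\,\U$ along the characteristics and invoking the identification $\mu(x) = \ln'\THfirst(x)$ from (\ref{eq:mu_theta_link}) converts the $\mu$-part of $\alpha$ into the ratio $\THfirst(x)/\THfirst(\X)$, while the polynomial-in-$x$ remainder contributes the exhibited Gaussian and $t$-polynomial exponentials.

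The principal technical step is exponentiating the two remaining generators, $a_1 = 1$ (producing $\Sym_2^\epsilon$) and $a_2 = 1$ (producing $\Sym_1^\epsilon$), because $\tau(t)$ is no longer constant, so the flow is genuinely two-dimensional in $(\X,\T)$. For $\Sym_2^\epsilon$ one obtains $d\T/d\epsilon = \T$, yielding $\T = te^{-\epsilon}$ after a conventional sign choice, and then the linear ODE $d\X/d\epsilon + \X/2 = -\tfrac{3\B}{2}\T^2$, solved via the integrating factor $e^{\epsilon/2}$, produces the closed form $(x - \B t^2)e^{-\epsilon/2} + \B t^2 e^{-2\epsilon}$. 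For $\Sym_1^\epsilon$ the equation $d\T/d\epsilon = \T^2$ gives the projective factor $t/(1+\epsilon t)$, and the companion equation for $\X$ admits an analogous quadrature. In both cases the prefactor is obtained by integrating $d\U/d\epsilon = \alpha(\X(\epsilon),\T(\epsilon))\,\U$ along the computed curves; the $\mu$-term again contributes the $\THfirst$ ratio, and the quadratic-in-$x$ remainder integrates to the Gaussian plus cubic-in-$t$ exponentials shown. The main obstacle is therefore careful bookkeeping: collecting the four contributions to $\alpha$ in (\ref{eq: field for time homogeneous}) at the endpoints and rewriting the result as a ratio evaluated at $(x,t)$ over the same expression at $(\X,\T)$. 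Because the six basis elements above together with $\Sym_\beta$ exhaust the general solution of (\ref{eq:main_system_for_symmetries}) with $\A = \D = 0$, no further symmetries can occur, completing the proof.
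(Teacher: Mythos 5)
Your proposal is correct and follows essentially the same route as the paper, which itself only sketches the argument: solve the determining system (\ref{eq:main_system_for_symmetries}) with $\A=\D=0$ to get the six-parameter algebra plus $\Sym_\beta$, then exponentiate each generator via (\ref{eq: systems to determine action of vector field}). Your version actually supplies more of the intermediate detail (the explicit polynomial forms of $\tau$, $\rho$, $\eta$ and the integrating-factor computations for $\Sym_1^\epsilon$, $\Sym_2^\epsilon$) than the paper does.
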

\begin{proof}
Proof of this result is a textbook, so we just summarize only main points. At first we solve the system (\ref{eq:main_system_for_symmetries}) and obtain components $\xi$, $\tau$, $\varphi$ of the symmetry generator $\textbf{v}$. Further we make exponentiation (\ref{eq:exponentiation}) and after tedious algebra get formulas (\ref{eq:explicit_symm_F1}). 
\end{proof}

\begin{proposition}
\label{prop:two_params_F1}
Any Lie symmetry (\ref{eq:Basic_transform}) of FPKE such that $\T(0) = 0$ is represented as one of the following two-parametric symmetries or its composition:
\begin{eqnarray}
\label{eq:two_parametric_symmetry_explicit_form_F1_s1}
\left. { \begin{array}{l}
\Sym_{1}^{\alpha, \beta}: \quad \frac{u(x,t)}{\U(\X, \T)} = \frac{\THfirst \left( x\right)} 
{\THfirst\left( \X \right)}
\frac{
	e^{- \frac{(x + \B t^2)^2}{2 t}+\frac{2\B^2 t^3}{3} - \C t}}{
	e^{- \frac{(\X + \B \T^2)^2}{2 \T}+\frac{2\B^2 \T^3}{3} - \C \T}}
\frac{t^{-1/2}}{\T^{-1/2}} ,
\\ 
\label{eq:two_parametric_symmetry_explicit_form_F1_s2}
\Sym_{2}^{\alpha, \beta}: \quad  \frac{u(x,t)}{\U(x - \alpha - \beta t, t)} = e^{-2\alpha \B t }
\frac{\THfirst\left(x\right)}{\THfirst\left(x - \alpha - \beta t\right)} 
\frac{e^{-\frac{(x + \B t^2)^2}{2t}}}{e^{-\frac{(x - \alpha - \beta t + \B t^2+ \alpha)^2}{2t}}},\quad \end{array}} \right.
\end{eqnarray}
Here $\X$ and $\T$ are defined as
\begin{equation}
\label{eq:X_def_F1}
\X = \alpha \frac{x - \B t^2 }{1+\alpha \beta t}  + \frac{\B \alpha^4 t^2 }{(1+\alpha \beta t)^2}, \quad \T = \frac{\alpha ^2 t}{1 + \alpha \beta t}
\end{equation}
\end{proposition}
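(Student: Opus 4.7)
The plan is to argue at the level of Lie algebras: identify $\Sym_1^{\alpha,\beta}$ and $\Sym_2^{\alpha,\beta}$ as honest closed 2-parameter subgroups of the classical symmetry group of the FPKE, check that their Lie algebras span (together with the one-dimensional rescaling factor of $\Sym_6^\epsilon$) the full 5-dimensional algebra of symmetries satisfying $\T(0)=0$, and then invoke the standard local-surjectivity statement for products of subgroups with complementary Lie algebras.

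First I would revisit the ODE system (\ref{eq:main_system_for_symmetries}) specialized to the $\Ffirst$ family, i.e.\ $\A=\D=0$. Its solution space (modulo the arbitrary-solution tail $\Sym_\beta$) is 6-dimensional --- $\tau$ is a free quadratic, $\rho$ is determined from $\rho_{tt}=3\B\tau_t$ up to two additional integration constants, and $\eta$ by one more --- and exponentiates to precisely the list $\Sym_1^\epsilon,\ldots,\Sym_6^\epsilon$ of Proposition \ref{prop:symm_F1}. The hypothesis $\T(0)=0$, applied to the flow $d\T/d\epsilon=\tau(\T)$ with $\T|_{\epsilon=0}=t$, translates into the algebraic constraint $\tau(0)=0$, which removes exactly the time-translation direction underlying $\Sym_5^\epsilon$ and cuts the algebra down to 5 dimensions. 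The $\Sym_6^\epsilon$ direction acts purely by $u\mapsto e^\epsilon u$ and can be absorbed into the prefactor $f(x,t)$ of (\ref{eq:Basic_transform}) without affecting $\X$ or $\T$, so it plays no role in distinguishing symmetries.

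Next I would check that $\Sym_1^{\alpha,\beta}$ and $\Sym_2^{\alpha,\beta}$ are genuine 2-parameter subgroups. For $\Sym_2^{\alpha,\beta}$ this is immediate from $(\X,\T)=(x-\alpha-\beta t,t)$: the composition law is the abelian one $(\alpha,\beta)\cdot(\alpha',\beta')=(\alpha+\alpha',\beta+\beta')$, and one recognizes the family as the product $\Sym_4^\alpha\circ\Sym_3^\beta$ of spatial and Galilean translations. For $\Sym_1^{\alpha,\beta}$, substituting $\T=\alpha^2 t/(1+\alpha\beta t)$ from (\ref{eq:X_def_F1}) into itself yields the non-abelian group law $(\alpha,\beta)\cdot(\alpha',\beta')=(\alpha\alpha',\,\beta/\alpha'+\alpha\beta')$, confirming closure under composition; the $\X$ and $f$ parts can be seen to compose compatibly. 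The specializations $\Sym_1^{\alpha,\beta}|_{\alpha=1}=\Sym_1^\beta$ and $\Sym_1^{\alpha,\beta}|_{\beta=0}=\Sym_2^{-2\ln\alpha}$ (respectively $\Sym_2^{\alpha,\beta}|_{\beta=0}=\Sym_4^\alpha$ and $\Sym_2^{\alpha,\beta}|_{\alpha=0}=\Sym_3^\beta$) identify the Lie algebras of the two subgroups with the 2-dimensional subspaces of the Lie algebra of Proposition \ref{prop:symm_F1} spanned respectively by $\{\Sym_1^\epsilon,\Sym_2^\epsilon\}$ and $\{\Sym_3^\epsilon,\Sym_4^\epsilon\}$; these subspaces meet trivially and together exhaust the 4-dimensional reduced algebra.

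Finally, a standard argument shows that if $H_1,H_2$ are connected closed subgroups of a connected Lie group whose Lie algebras are complementary as vector subspaces of the ambient algebra, then the multiplication map $H_2\times H_1\to G$ is a local diffeomorphism at the identity and covers the full connected component. Applied here this yields that every Lie symmetry with $\T(0)=0$ equals $\Sym_2^{\alpha_2,\beta_2}\circ\Sym_1^{\alpha_1,\beta_1}$ for some choice of the four parameters, modulo the trivial multiplicative factor of $\Sym_6^\epsilon$ that is hidden in $f$. The main obstacle is the explicit verification of local surjectivity: one must differentiate the composed $\X,\T,f$ of $\Sym_2^{\alpha_2,\beta_2}\circ\Sym_1^{\alpha_1,\beta_1}$ at $(\alpha_1,\beta_1,\alpha_2,\beta_2)=(1,0,0,0)$ and check that the resulting $4\times 4$ Jacobian is non-singular. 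The computation is purely algebraic but is the single place where the precise nonlinear form of (\ref{eq:X_def_F1}) plays an essential role.
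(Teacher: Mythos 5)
Your strategy is essentially the paper's argument made explicit: the author also identifies $\Sym_{1}^{\alpha,\beta}$ and $\Sym_{2}^{\alpha,\beta}$ with the subgroups generated by $\{\Sym_1^{\epsilon},\Sym_2^{\epsilon}\}$ and $\{\Sym_3^{\epsilon},\Sym_4^{\epsilon}\}$ respectively, by checking that left composition with those one-parameter symmetries preserves the two-parameter form, and leaves the ``every symmetry with $\T(0)=0$ is such a composition'' step implicit. Your complementary-subalgebra argument supplies exactly that missing step, and your dimension count (drop $\Sym_5$ because $\T(0)=\epsilon\neq 0$, absorb $\Sym_6$ into $f$) is correct. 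So the proposal is acceptable in outline, and is if anything more complete than the printed proof.

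Two points need repair. First, $\Sym_2^{\alpha,\beta}$ is \emph{not} a two-parameter subgroup with abelian law $(\alpha,\beta)\cdot(\alpha',\beta')=(\alpha+\alpha',\beta+\beta')$: while the $(\X,\T)$ parts compose abelianly, the prefactors do not. A direct computation gives
$\Sym_2^{\alpha',\beta'}\circ\Sym_2^{\alpha,\beta}=c\,\Sym_2^{\alpha+\alpha',\beta+\beta'}$ with a nonzero constant $c=e^{\pm\alpha\beta'}$, reflecting the Heisenberg relation $[\partial_x,\;t\partial_x - x u\partial_u]=\mp u\partial_u$: the span of the generators of $\Sym_3^{\epsilon}$ and $\Sym_4^{\epsilon}$ is not a subalgebra, only its image modulo the central $u\partial_u$ direction is. The family is therefore a subgroup only in the quotient by the $\Sym_6$ direction, so you must run the entire argument in that quotient (which you already half do when you ``absorb $\Sym_6$ into $f$''), or track the cocycle; as stated, the claim ``connected closed subgroups'' is false for $H_2$. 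Second, the assertion that complementarity of the Lie algebras makes $H_2\times H_1\to G$ \emph{cover the full connected component} overstates the standard result: one only gets a local diffeomorphism at the identity, hence that the subgroup \emph{generated} by $H_1\cup H_2$ is the whole connected stabilizer of $\{\T(0)=0\}$. That weaker conclusion is still exactly what the proposition needs, since it permits arbitrary compositions of the two families (and the central constants arise as their commutators), but the proof should be phrased in terms of generation rather than of a single product $h_2h_1$.
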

\begin{proof}
We will only point out the main ideas for the proof. It is easy to check that any repeated application of the symmetries $\Sym_{1}^{\epsilon}$ or $\Sym_{2}^{\epsilon}$ from (\ref{eq:explicit_symm_F1}) after the symmetry $\Sym_{1, 2}^{\alpha, \beta}$ from (\ref{eq:two_parametric_symmetry_explicit_form_F1_s1}) holds structure of $\Sym_{1, 2}^{\alpha, \beta}$, i.e  $ \forall \alpha, \beta, \epsilon$, $\exists \hat{\alpha}, \hat{\beta}$: 
\[
\Sym_{1(2)}^{\epsilon} \circ \Sym_{1, 2}^{\alpha, \beta} = \Sym_{1, 2}^{\hat{\alpha}, \hat{\beta}}.  
\] 
Proof for the symmetry $\Sym_{3, 4}$ is fully analogical.
\end{proof}

\begin{thm}
\label{thm:F1_two_params}
Let $T^b$ be the first passage time to the boundary $b(t)$ by the process $X_t$ defined by (\ref{eq:SDE_mu}) with $n = 1$. Consider two-parametric family of curves defined over the curve $b(t)$:
\begin{equation}
g^{\alpha, \beta}(t) = \frac{1 + \alpha \beta t}{\alpha} b \left( \frac{\alpha^2 t}{1 + \alpha \beta t}\right)
+ \B t^2 - \frac{\alpha^3 \B t^2}{1 + \alpha \beta t}.
\end{equation}
Let $g(t)$ be an any curve from $g^{\alpha, \beta}$ and $T^g$ be the first hitting time (\ref{eq:FPT_g}). The boundary crossing identity between densities correspond to the stopping times $T^b$ and $T^g$ has the following representation:
\begin{equation}
\frac{\rho_{x_0}^{T^g}(t)}
{\rho_{\alpha x_0} ^{T^b} \left( \T \right)} 
= \frac{\alpha^2 e^{ \frac{\alpha \beta x_0^2}{2}}}{(1+\alpha \beta t)^{3/2}} \frac{\THfirst(\alpha x_0)}{\THfirst(x_0)} \frac{\THfirst\left(g(t) \right)}{\THfirst\left(b\left(\T \right) \right)}
\frac{
	e^{- \frac{(g(t) + \B t^2)^2}{2 t}+\frac{2\B^2 t^3}{3} - \C t}}{
	e^{- \frac{(b(\T) + \B \T^2)^2}{2 \T}+\frac{2\B^2 \T^3}{3} - \C \T}}.
\end{equation}
Here variable $\T$ is defined in (\ref{eq:X_def_F1}).  Symmetry $\Sym_{2}^{\alpha, \beta}$ from (\ref{eq:two_parametric_symmetry_explicit_form_F1_s1}) corresponds to the following two-parametric family of curves:
\begin{equation}
g^{\alpha, \beta}(t) = \alpha + \beta t + b(t).
\end{equation}
The law of $T^g$ to any boundary from $g \in g^{\alpha, \beta}(t)$ by the process $X_t$ can be explicitly represented from the following representation
\begin{equation}
\label{eq:Girsanov_type_mu_1}
	\frac{\rho_{x_0}^{T^g}(t)}
	{\rho_{x_0 - \alpha} ^{T^b} \left( t \right)} = 
	e^{-2\alpha \B t}\frac{\THfirst(x_0 - \alpha )}{\THfirst(x_0)} \frac{\THfirst\left(g(t)\right)}{\THfirst \left(b(t)\right)}
	\frac{e^{-\frac{\left( g(t) + \B t^2\right)^2}{2t}}}{e^{-\frac{\left( b(t) +\alpha + \B t^2\right)^2}{2t}}}
\end{equation}
\end{thm}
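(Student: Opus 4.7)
The strategy is to apply Theorem \ref{thm:main} directly with each of the two-parametric symmetries $\Sym_{1}^{\alpha,\beta}$ and $\Sym_{2}^{\alpha,\beta}$ provided by Proposition \ref{prop:two_params_F1}. From each symmetry I read off the change of variables $\X(x,t)$, $\T(t)$ and the multiplier $f(x,t)$: $\Sym_{1}^{\alpha,\beta}$ yields $\X$, $\T$ as in (\ref{eq:X_def_F1}) together with the Gaussian prefactor displayed in (\ref{eq:two_parametric_symmetry_explicit_form_F1_s1}), while $\Sym_{2}^{\alpha,\beta}$ gives the simpler data $\X = x-\alpha-\beta t$, $\T = t$ and the Girsanov-type prefactor.

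The deformed boundary $g^{\alpha,\beta}(t)$ is obtained from (\ref{eq:g_def}), i.e. from $\X(g(t),t) = b(\T(t))$. Since $\X$ is affine in $x$ in both cases, the equation can be inverted algebraically: rearranging $\alpha(g-\B t^2)/(1+\alpha\beta t) + \B\alpha^4 t^2/(1+\alpha\beta t)^2 = b(\T)$ reproduces the first displayed formula for $g^{\alpha,\beta}$, and for $\Sym_{2}^{\alpha,\beta}$ one immediately obtains $g(t) = \alpha+\beta t + b(t)$. Next I would evaluate the geometric data entering (\ref{eq:rho_transform}). Under $\Sym_{1}^{\alpha,\beta}$ one has $\partial\X/\partial x = \alpha/(1+\alpha\beta t)$ independent of $x$, and since $\X(x,0) = \alpha x$ the inverse is $\Psi(\mathscr{Y}) = \mathscr{Y}/\alpha$, giving $\Psi'(\X(x_0,0)) = 1/\alpha$, $\X(x_0,0) = \alpha x_0$, and a Jacobian contribution $\alpha^{2}/(1+\alpha\beta t)$. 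Under $\Sym_{2}^{\alpha,\beta}$ the geometric factors are trivial: $\partial\X/\partial x = 1$, $\Psi'\equiv 1$, $\X(x_0,0) = x_0-\alpha$.

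The principal obstacle is evaluating $f(x_0,0)$ for the $\Sym_{1}^{\alpha,\beta}$ symmetry: the numerator carries $\exp(-x_0^{2}/(2t))\,t^{-1/2}$ and the denominator $\exp(-(\X+\B\T^{2})^{2}/(2\T))\,\T^{-1/2}$, each formally singular as $t\downarrow 0$. I would expand $\T = \alpha^{2}t + O(t^{2})$ and $\X + \B\T^{2} = \alpha x_0 - \alpha^{2}\beta x_0 t + O(t^{2})$ in powers of $t$, verify that the singular $x_0^{2}/(2t)$ contributions cancel, and read off the surviving constant term as $e^{\alpha\beta x_0^{2}/2}$, while the ratio of the $t^{-1/2}$ factors contributes $(\T/t)^{1/2}|_{t=0} = \alpha$. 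Combined with the Jacobian above and the factor $(\T/t)^{1/2} = \alpha/(1+\alpha\beta t)^{1/2}$ present in $f(g(t),t)$, this produces the leading coefficient $\alpha^{2} e^{\alpha\beta x_0^{2}/2}/(1+\alpha\beta t)^{3/2}$ in the claimed identity.

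Substituting the pieces into (\ref{eq:rho_transform}) and collecting the $\THfirst$, Gaussian and $e^{-\C t}$ factors that appear in $f(g(t),t)$ and $f(x_0,0)$ then gives the stated boundary-crossing identity for $\Sym_{1}^{\alpha,\beta}$. The $\Sym_{2}^{\alpha,\beta}$ case requires no limiting argument because $\T = t$ and $\X(x_0,0) = x_0-\alpha$ are non-degenerate at $t = 0$; a direct substitution into (\ref{eq:rho_transform}) with $\sigma\equiv 1$ and the trivial Jacobian reproduces (\ref{eq:Girsanov_type_mu_1}) after elementary algebraic simplification.
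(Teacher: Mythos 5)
Your approach coincides with the paper's: the entire published proof is the single line ``Apply Theorem \ref{thm:main} with symmetries from (\ref{eq:two_parametric_symmetry_explicit_form_F1_s1})'', and you carry out exactly that plan, correctly identifying $\X$, $\T$, $f$, the Jacobian $\partial\X/\partial x$, the inverse map $\Psi$, and the boundary equation $\X(g(t),t)=b(\T)$. Your treatment of $\Sym_1^{\alpha,\beta}$ is right, including the only genuinely delicate step: the $t\downarrow0$ limit of $f(x_0,0)$, where the two singular Gaussian exponents cancel to leave $e^{-\alpha\beta x_0^2/2}$ and the ratio of $t^{-1/2}$ factors tends to $\alpha$; assembled with $\partial\X/\partial x\cdot(\Psi')^{-1}=\alpha^2/(1+\alpha\beta t)$ and the $\sqrt{\T/t}$ inside $f(g(t),t)$, this gives the prefactor $\alpha^2e^{\alpha\beta x_0^2/2}(1+\alpha\beta t)^{-3/2}$ as claimed.

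One correction, though. Your assertion that the $\Sym_2^{\alpha,\beta}$ case ``requires no limiting argument'' is not accurate, and it hides the one place where a careful substitution does not reproduce the printed formula. For that symmetry $f(x,t)$ is again a ratio of two Gaussians, each singular as $t\downarrow0$; the exponent of $f(x_0,0)$ is $\lim_{t\to0}\bigl[-(x_0+\B t^2)^2+(x_0-\beta t+\B t^2)^2\bigr]/(2t)=-\beta x_0$, so $f(x_0,0)=e^{-\beta x_0}\,\THfirst(x_0)/\THfirst(x_0-\alpha)$ rather than just $\THfirst(x_0)/\THfirst(x_0-\alpha)$. Feeding this into (\ref{eq:rho_transform}) produces the right-hand side of (\ref{eq:Girsanov_type_mu_1}) multiplied by $e^{\beta x_0}$. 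That extra factor is genuinely needed: for standard Brownian motion ($\B=\C=0$, $\THfirst$ constant, $\alpha=0$, $b(t)\equiv a$) the printed identity gives $e^{-a\beta-\beta^2t/2}$ times the level-$a$ density, whereas the correct sloping-line density carries $e^{\beta(x_0-a)-\beta^2t/2}$; and the worked example following the theorem, with its term $e^{-a(b-x_0)}$, is consistent only with the $e^{\beta x_0}$-corrected identity. So your method is the right one, but you should carry out the $t\downarrow0$ limit for $\Sym_2^{\alpha,\beta}$ as well; doing so shows that (\ref{eq:Girsanov_type_mu_1}) as displayed is missing the factor $e^{\beta x_0}$, and a proof that claims to ``reproduce'' it by direct substitution would be incorrect at that step.
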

\begin{proof}
	Apply Theorem \ref{thm:main} with symmetries from (\ref{eq:two_parametric_symmetry_explicit_form_F1_s1}).
\end{proof}
\begin{thm}
	\label{thm:F1_to_BM}
	Let $X_t$ be a stochastic process of the form (\ref{eq:SDE_mu}) with $ n = 1$. Let $b(t)$ and $g(t)$ also be smooth functions of time such that 
	\[ 
	g(t) = b(t) + \B t^2.
	\] 
	Define stopping times $T^g$ and $T^b$ for the process $X_t$ and for a standard Brownian motion $W_t$ respectively:
	\begin{equation}
	T^g = \inf \left\{t \geq 0, \, X_t \leq g(t) \right\}, \quad  
	T^b = \inf \left\{t \geq 0, \, W_t \leq b(t) \right\}
	\end{equation}
	Then, there exists the following identity between the densities $\rho^{T^g}_{x_0} (t)$ and $\rho^{T^b}_{x_0} (t)$:
	\begin{eqnarray}
\label{eq:rho_F1_toBM}
\rho^{T^g}_{x_0} (t) = \frac{\THfirst\left(g(t)\right)}{\THfirst(x_0)} e^{\frac{2\B t^3}{3} - \C t - 2\B t g(t) } \rho^{T^b}_{x_0} (t)
\end{eqnarray}
	Laplace transform of the stopping moment $T^g$ can be represented by:
	\begin{equation}
	\mathbb{E}_{x_0}\left[e^{-\lambda T^g}\right] = \frac{\mathbb{E}_{x_0}\left[ \THfirst\left(W_{T^b} + \B (T^b)^2 \right) e^{-2\B T^b W_{T^b}  -\frac{4\B (T^b)^3}{3}-(\lambda + \C) T^b}\right]}{\THfirst(x_0)}
	\end{equation}
\end{thm}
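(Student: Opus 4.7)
The plan is to invoke Theorem \ref{thm:main_ch_mes} with $Y_t$ taken to be standard Brownian motion, so that $\GG = \tfrac12 \partial_\X^2$; the remark after Theorem \ref{thm:Lie_symm_existence} already predicts this since a six-dimensional symmetry algebra forces equivalence with the heat equation. Concretely, I need to exhibit an explicit transformation of the form (\ref{eq:Basic_transform}) linking $\bigl(\LL - \partial_t\bigr) u = 0$ to $\bigl(\GG - \partial_\T\bigr)\U = 0$, read off $f$, $\X$, $\T$, $\Psi$, and then specialise (\ref{eq:rho_transform}).

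The transformation I expect to use would be assembled in three steps. First, the Doob-type substitution $u(x,t) = \THfirst(x)\, v(x,t)$ removes the drift: using $\mu = \ln'\THfirst$ together with $\THfirst'' = (\mu' + \mu^2)\THfirst = (4\B x + 2\C)\THfirst$, the equation $(\LL - \partial_t) u = 0$ reduces to $\tfrac12 v_{xx} - (2\B x + \C) v - v_t = 0$. Second, the factor $e^{-\C t}$ absorbs the constant potential, so that $v = e^{-\C t} w$ satisfies $\tfrac12 w_{xx} - 2\B x w - w_t = 0$. Third, to dispose of the remaining linear potential I would use the Galilean-type ansatz $w(x,t) = e^{\phi(x,t)}\,\tilde{w}(x - \B t^2, t)$; matching the coefficient of $\tilde{w}_\xi$ in the transformed PDE forces $\phi_x = -2\B t$, while matching the coefficient of $\tilde{w}$ then forces $\phi_t = 2\B^2 t^2 - 2\B x$, whence $\phi(x,t) = -2\B t x + \tfrac{2\B^2 t^3}{3}$ and $\tilde{w}$ satisfies the heat equation. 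This produces a relation of the form (\ref{eq:Basic_transform}) with
\begin{equation*}
f(x,t) = \THfirst(x)\, e^{-\C t - 2\B t x + \frac{2\B^2 t^3}{3}}, \qquad \X(x,t) = x - \B t^2, \qquad \T(t) = t.
\end{equation*}

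With the transformation in hand, Theorem \ref{thm:main_ch_mes} gives the density identity essentially for free. The boundary condition $\X(g(t),t) = b(\T(t))$ yields $g(t) = b(t) + \B t^2$; since $\X(x,0) = x$ the function $\Psi$ from (\ref{eq:Psi_def}) is the identity and $\Psi' \equiv 1$, while $\partial_x \X \equiv 1$ and both diffusion coefficients are unity, so (\ref{eq:rho_transform}) collapses to $\rho^{T^g}_{x_0}(t) = (f(g(t),t)/f(x_0,0))\,\rho^{T^b}_{x_0}(t)$, which is precisely (\ref{eq:rho_F1_toBM}). For the Laplace transform I would integrate (\ref{eq:rho_F1_toBM}) against $e^{-\lambda t}\,dt$ and recognise the integral as $\mathbb{E}_{x_0}[F(T^b)]$ for the appropriate functional of Brownian motion; at the hitting time $W_{T^b} = b(T^b)$, hence $g(T^b) = W_{T^b} + \B(T^b)^2$, and combining $\tfrac{2\B^2 t^3}{3}$ with the cross term $-2\B t\, g(t) = -2\B t W_t - 2\B^2 t^3$ produces the stated exponent. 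The main obstacle in this plan is pinning down $\phi(x,t)$ so that the linear-in-$x$ potential, the Galilean correction produced by differentiating the shift $\xi = x - \B t^2$ in $t$, and the ``quantum'' term $\tfrac12\phi_x^2$ cancel simultaneously; once that algebra is in place, everything else is a routine specialisation of Theorem \ref{thm:main_ch_mes}.
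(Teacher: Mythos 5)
Your proposal is correct and follows the same route as the paper: the paper's proof simply exhibits the relation $u(x,t) = \THfirst(x)\, e^{\frac{2\B^2}{3}t^3 - \C t - 2\B x t}\, \U(x - \B t^2, t)$ and invokes Theorem \ref{thm:main_ch_mes}, which is exactly the transformation your three-step Doob/gauge/Galilean computation produces (and your derivation supplies the ``direct calculation'' the paper omits). Note that your exponent $\frac{2\B^2 t^3}{3}$ matches the paper's proof, and the factors $\frac{2\B t^3}{3}$ and $\frac{4\B (T^b)^3}{3}$ appearing in the theorem statement are evidently typos for $\frac{2\B^2 t^3}{3}$ and $\frac{4\B^2 (T^b)^3}{3}$.
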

\begin{proof}
Let $\LLinf$  and $\GGinf$ be infinitesimal generators of diffusion process $X_t$ and standard Brownian motion $W_t$ respectively. We can check by the direct calculations that the solutions of equations $\left( \LL - \partial_t\right) u = 0$ and  $\left( \GG - \partial_\T\right) \U = 0$ has the following relation of the form (\ref{eq:Basic_transform}):
\[
u(x,t) = \THfirst(x) e^{\frac{2\B^2}{3} t^3 - Ct - 2 \B x t} \U(x-\B t^2, t).	 
\]
Application of Theorem \ref{thm:main_ch_mes} finishes the proof.
\end{proof}

\begin{example}
	Let $W_t^{({\omega})}$ be a Brownian motion with drift $\omega$ and $M_t^{(\omega)} = \sup\left\{W_s : s \leq t \right\}.$ The process $X_t = 2 M_t^{(\omega)} - W_t^{(\omega)}$ (see \cite{Rogers Pitman}) solves SDE
\begin{equation}
	dX_t = |\omega| \coth\left(|\omega| X_t \right) dt + dW_t, \quad X_0 =x_0.  
\end{equation} 
Drift $|\omega| \coth\left(|\omega|x\right) \in \Ffirst$ with parameters $\B = 0$, $\C = \omega^2/2$, hence $\theta(x)  = \sinh(|\omega|x)$. Apply Theorem \ref{thm:F1_to_BM} for the stopping times: 
\[
T^g = \inf \left\{t \geq 0, \, X_t \leq b \right\},
\quad T^b = \inf \left\{t \geq 0, \, W_t \leq b \right\}.
\]
In the result we have
\begin{eqnarray}
	\rho^{T^g}_{x_0} (t) 
	&=& \frac{\sinh\left(|\omega|b\right)}{\sinh\left(|\omega|x_0\right)} e^{- \omega^2 t / 2} \rho^{T^b}_{x_0} (t) 
	\\ \nonumber
	&=&  \frac{|b - x_0|}{\sqrt{2\pi t^3}} \frac{\sinh\left(|\omega|b\right)}{\sinh\left(|\omega|x_0\right)} e^{- \omega^2 t / 2 - \frac{(b-x_0)^2} {2t}}. 
\end{eqnarray}
Application of Theorem \ref{thm:F1_two_params} yields the explicit formula for the density of the stopping moment $ T^h = \inf \left\{t \geq 0, \, X_t \leq at+ b \right\}$:
\begin{equation}
\rho_{x_0}^{T^h} (t) = \frac{|b - x_0|}{\sqrt{2\pi t^3}} \frac{\sinh\left(|\omega| (at + b) \right)}{\sinh\left(|\omega|x_0\right)} 
e^{-\frac{\omega^2 + a^2}{2}t - a(b-x_0) - \frac{(b-x_0)^2} {2t}}
\end{equation}

\end{example}

\begin{example}
Consider Pearson diffusion process $U_t$ defined by SDE:
\begin{equation}
\label{eq:Pearson_diffusion}
dU_t = \left(\alpha U_t +\beta \right)dt + \sqrt{r U_t^2 + pU_t + q} dW_t,\quad U_0 = u_0, \quad  r > 0. 
\end{equation}
We are looking for closed form formula for the distribution of the first passage time $ T^h = \inf \left\{ t \geq 0, \, U_t  \leq h \right\} $.
Denote by $d = -(p^2 - 4 r q)$ and suppose that $d > 0$. Application of Lamberti transform turns out to the process $X_t$:
\begin{eqnarray}
\nonumber
X_t &=& \int\frac{du}{\sqrt{r u^2 +p u + q}} |_{u = U_t} = \frac{1}{\sqrt{r}} \ln\left(\sqrt{rU_t^2 + pU_t + q} + \sqrt{r} U_t +\frac{p}{2 \sqrt{r}}\right)
\\ \nonumber
U_t &=&-\frac{p}{2\sqrt{r}} + \frac{e^{\sqrt{r}X_t} - d e^{-\sqrt{r}X_t}  }{2\sqrt{r}}, \quad \sqrt{rU_t^2 + p U_t  + q} = \frac{e^{\sqrt{r}X_t} + d e^{-\sqrt{r}X_t}  }{2}.
\end{eqnarray}
In the general case FPKE does not have Lie symmetries. It is easy to check that symmetries exist if and only if $\alpha$ and $\beta$ has the following form
\[
\alpha = \frac{3}{4} r, \quad \beta = \frac{3}{4} p,
\] 
The process $X_t$ solves the following SDE ($\theta(x) = {e^{\sqrt{a}x} - d e^{-\sqrt{a}x}}$):
\begin{equation}
dX_t = \sqrt{r}\frac{e^{\sqrt{r}X_t} + d e^{-\sqrt{r}X_t}}{e^{\sqrt{r}X_t} - d e^{-\sqrt{r}X_t}} dt + dW_t.
\end{equation}
The stopping time $T^h$ has the same distribution that the stopping time $ T^g = \inf \left\{ t \geq 0, \, X_t \leq b \right\}$. Application of Theorem \ref{thm:F1_to_BM} yields the formula
\begin{equation}
\nonumber
\rho_{x_0}^{T^g} (t) = 
\frac{|b- x_0|}{\sqrt{2\pi t^3}} 
\frac
{e^{\sqrt{r} b} - d e^{-\sqrt{r}b}} 
{e^{\sqrt{r} x_0} - d e^{-\sqrt{r} x_0}}
e^{-\frac{r t}{2}  - \frac{(b-x_0)^2}{2t}}.
\end{equation}
After the following substitutions
\begin{equation}
\begin{array}{c} 
2\sqrt{r} u_0 + q = e^{\sqrt{r} x_0} - d e^{-\sqrt{r} x_0}, \quad 
2\sqrt{r} h + q = e^{\sqrt{r} b} - d e^{-\sqrt{r} b}
\\ 
b - x_0 = 
\frac{1}{\sqrt{r}}
\ln \left(
\frac{2\sqrt{r} h + q + \sqrt{\left(2\sqrt{r} h + q\right)^2 + 4d}}
{2\sqrt{r} u_0 + q + \sqrt{\left(2\sqrt{r} u_0 + q\right)^2 + 4d}}
\right) 
\end{array}
\end{equation}
we have
\begin{equation}
\begin{array} {c}
\rho_{u_0}^{T^h} (t) = 
\frac{1}{\sqrt{2\pi r t^3}} 
\ln \left(
\frac{2\sqrt{r} h + q + \sqrt{\left(2\sqrt{r} h + q\right)^2 + 4d}}
{2\sqrt{r} u_0 + q + \sqrt{\left(2\sqrt{r} u_0 + q\right)^2 + 4d}}
\right) 
\frac {2\sqrt{r} h +q} {{2\sqrt{r} u_0 + q}} 
\times
\\
\times
\exp \left\{ -\frac{r t}{2}
- \frac{1}{2 r t} \ln^2 \left(
\frac{2\sqrt{r} h + q + \sqrt{\left(2\sqrt{r} h + q\right)^2 + 4d}}
{2\sqrt{r} u_0 + q + \sqrt{\left(2\sqrt{r} u_0 + q\right)^2 + 4d}}
\right)  
\right\}
\end{array}
\end{equation}
\end{example}

\begin{remark}
Consider the first passage time $T^b$ of quadratic boundary $b(t) = a- \B t^2$ by a Brownian Motion. The distribution of $T^b$ can be explicitly represented in terms of stopping time 
$ T^g = \inf \left\{t \geq 0 ,\, X_t \leq a \right\} $, where process $X_t$ is a solution to the following SDE:
\[
dX_t = \sqrt[3]{4\B}\frac{Ai'\left(\sqrt[3]{4 \B} X_t \right)}{Ai\left(\sqrt[3]{4 \B} X_t \right)} dt +
dW_t.
\]
At the same time, law of $T^g$ can be explicitly derived by Laplace transform and spectral expansions method, see \cite{Linetsky}. The probabilistic approach to analysis of stopping time $T^b$ is presented in \cite{Groeninboom} and \cite{Salminen}. 
\end{remark}
\subsection{Drift functions from $\Fsecond: \mu^2 + \mu' = \A x^2 + 4 \B x + 2 \C$}
Now we turn to the process $X_t$ of the form (\ref{eq:SDE_mu}) with $n = 2$. 
\begin{proposition}
	The solution of equation (\ref{eq:theta}) for $n= 2$ is the following function
\begin{equation}
\THsecond(x) = c_1 D_{2\nu} \left(\sqrt[4]{4\A} \left[ x + \frac{2 \B}{\A} \right] \right) 
+ c_2 D_{2\nu} \left(-\sqrt[4]{4\A} \left[ x + \frac{2 \B}{\A} \right] \right), 
\end{equation}
where $D_{\nu}(\pm z)$ are parabolic cylinder functions, for definitions and properties see \cite{AbraStegun}, constant $\nu$ is equal
\begin{equation}
\nu =\frac{1}{2\sqrt{\A}} \left( \frac{2\B^2}{\A} - \frac{\sqrt{\A}}{2} -  \C   \right),
\end{equation}  
\end{proposition}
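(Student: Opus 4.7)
The plan is to recognize equation~(\ref{eq:theta}) for $n=2$ as a disguised parabolic cylinder (Weber) equation. Note first that since $\THsecond(x)=e^{\int\mu(x)\,dx}$ one has $\THsecond'=\mu\THsecond$ and therefore $\THsecond''=(\mu'+\mu^2)\THsecond$, so the relevant ODE is
\begin{equation*}
\THsecond''(x)=(\A x^{2}+4\B x+2\C)\,\THsecond(x).
\end{equation*}
(The first-order form in (\ref{eq:theta}) is clearly a misprint; the genuine linearization of a Ricatti equation is of second order.)

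The first step is to complete the square in the coefficient, $\A x^{2}+4\B x+2\C=\A\bigl(x+2\B/\A\bigr)^{2}+(2\C-4\B^{2}/\A)$, which suggests the affine substitution $y=x+2\B/\A$ to centre the quadratic potential. The second step is to scale: set $z=\sqrt[4]{4\A}\,y$, so that $d^{2}/dy^{2}=2\sqrt{\A}\,d^{2}/dz^{2}$ and $\A y^{2}=\tfrac{\sqrt{\A}}{2}z^{2}$. Dividing through by $2\sqrt{\A}$ transforms the ODE into the canonical Weber form
\begin{equation*}
\THsecond_{zz}=\left(\frac{z^{2}}{4}+\frac{\C}{\sqrt{\A}}-\frac{2\B^{2}}{\A^{3/2}}\right)\THsecond.
\end{equation*}

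The third step is to match this against the standard parabolic cylinder equation $D_{m}''(z)=\bigl(z^{2}/4-m-1/2\bigr)D_{m}(z)$, giving $m=-\tfrac12-\tfrac{\C}{\sqrt{\A}}+\tfrac{2\B^{2}}{\A^{3/2}}$, which after factoring out $\tfrac{1}{2\sqrt{\A}}$ is exactly $2\nu$ with $\nu$ as announced in the proposition. Because the Weber equation is invariant under $z\mapsto -z$, the two functions $D_{2\nu}(z)$ and $D_{2\nu}(-z)$ form a fundamental system (for generic $\nu$), so the general solution is their linear combination evaluated at the shifted, scaled argument $\sqrt[4]{4\A}(x+2\B/\A)$.

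The calculation is routine once the correct substitutions are made; the only care required is bookkeeping powers of $\A$ through the scaling and verifying that the independent pair chosen is $D_{2\nu}(\pm z)$ rather than, say, $D_{2\nu}(z)$ and $D_{-2\nu-1}(iz)$. One can sidestep even this point by invoking the tabulated ODE solution in the appendix of Borodin--Salminen (as was done for Proposition~3.1, with the parameters $\lambda=\C$, $\gamma=2\B$ there replaced by the appropriate entries for the case including the $\A x^{2}$ term), thereby reducing the whole statement to identification of constants. No step poses a real obstacle.
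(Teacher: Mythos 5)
Your proposal is correct, and your reading of (\ref{eq:theta}) as a second-order equation $\THsecond''=(\mu'+\mu^2)\THsecond$ is the right one (indeed $\THsecond'=\mu\THsecond$ forces $\THsecond''=(\mu'+\mu^2)\THsecond$, so the first derivative in (\ref{eq:theta}) is a misprint). Your route differs from the paper's only in that you carry out the reduction yourself: the paper's entire proof is a one-line citation, ``apply formula 3 with $\gamma^2=\A$, $\lambda=\C-2\B^2/\A$ from Borodin--Salminen, Appendix 4,'' i.e.\ it reads the answer off a tabulated ODE, exactly the shortcut you mention at the end. Your explicit chain --- complete the square, shift by $2\B/\A$, rescale by $\sqrt[4]{4\A}$ so that $d^2/dy^2=2\sqrt{\A}\,d^2/dz^2$ and $\A y^2=\tfrac{\sqrt{\A}}{2}z^2$, then match against $D_m''=(z^2/4-m-\tfrac12)D_m$ to get $m=2\nu$ --- checks out line by line and recovers the stated $\nu$. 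What your version buys is self-containedness and a verifiable identification of the index, plus the honest caveat that $D_{2\nu}(z)$, $D_{2\nu}(-z)$ are only a fundamental system for generic $\nu$ (they degenerate when $2\nu$ is a non-negative integer), a point neither the proposition nor the paper's citation addresses; what the paper's version buys is brevity and consistency with how the other three families are handled.
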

\begin{proof}
	Apply formula 3 with $\gamma^2 = \A$, $\lambda = \C - 2\B^2 / \A.$ from \cite[~ Appendix 4]{BorodinSalminen} . 
\end{proof}

\begin{proposition}
Let $X_t$ be a diffusion process of the form (\ref{eq:SDE_mu}) with $ n = 2$. Corresponded FPKE has the following Lie symmetries only
\begin{eqnarray}
\label{eq:explicit_symm_F2}
\left. { \begin{array}{l}
\Sym_{1, 2}^{\epsilon}: \,
\frac{u(x,t)}
     {\U \left( \X, \T \right)} = 
\frac{\THsecond\left(x\right)}{\THsecond\left(\X\right)} 
\frac{e^{2\sqrt{\A} \left(\nu - \frac{1}{4} \pm \frac{1}{4} \right) t 
		\mp \frac{\sqrt{\A}}{2} \left(x + 2\B / \A \right)^2}}
{e^{2\sqrt{\A} \left(\nu - \frac{1}{4} \pm \frac{1}{4} \right) \T 
		\mp \frac{\sqrt{\A}}{2} \left(\X + 2\B / \A \right)^2}},
\\ 
\Sym_{3, 4}^{\epsilon}: \, 
\frac{u(x,t)}
     {\U\left(x - \epsilon e^{\pm \sqrt{\A}t}, t \right)} =  
\frac{\THsecond\left(x\right)}
     {\THsecond\left(x - \epsilon e^{\pm \sqrt{\A}t} \right)} 
\frac{e^{ \mp \frac{\sqrt{A}}{2}\epsilon e^{\pm \sqrt{\A}t} \left[ x + \frac{2\B}{\A} \right]}}
	 {e^{ \pm \frac{\sqrt{A}}{2}\epsilon e^{\pm \sqrt{\A}t} \left[ x - \epsilon e^{\pm \sqrt{\A}t} + \frac{2\B}{\A} \right]}},
\\ 
\Sym_{5}^{\epsilon}:, \, \frac{u(x,t)}{\U(x, t + \epsilon)} =  1,
\\ 
\Sym_{6}^{\epsilon}:, \,  \frac{u(x,t)}{\U(x, t)} =  e^\epsilon, 
\\ 
\Sym_{\beta}, \, u(x,t) =  \beta(x,t) + \U\left(x, t \right). 
\end{array} } \right.
\end{eqnarray}
Here 
\[
\X_{\Sym_{1, 2}} = \frac{x + 2\B / \A}{\sqrt{1+\epsilon e^{\pm 2\sqrt{\A t}}}} - \frac{2\B}{\A}, \quad 
\T_{\Sym_{1,2}}= -\frac{\ln\left( \epsilon + e^{\mp 2\sqrt{\A}t}\right) }{2\sqrt{\A}}.
\]
\end{proposition}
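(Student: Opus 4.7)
The plan is to follow the three-step template already used for Proposition~\ref{prop:symm_F1}: first, solve the determining system (\ref{eq:main_system_for_symmetries}) with the specialization $\D=0$ (the hallmark of $\Fsecond$, which frees $\rho$ from having to vanish); then exponentiate each basis infinitesimal generator through the flow (\ref{eq: systems to determine action of vector field}); finally read off the finite action in the form (\ref{eq:Basic_transform}) and collect the results into the list (\ref{eq:explicit_symm_F2}).

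For step one, the ODE $\tau_{ttt}-4\A\tau_t=0$ has a three-dimensional solution space spanned by $\{1,e^{2\sqrt{\A}t},e^{-2\sqrt{\A}t}\}$; the linear ODE $\rho_{tt}-\A\rho=3\B\tau_t$ then contributes a two-dimensional homogeneous space $\{e^{\pm\sqrt{\A}t}\}$ together with particular solutions proportional to $\tau_t$; and $\eta$ is obtained by the quadrature $\eta_t=-(\tau_{tt}/4+\C\tau_t+2\B\rho)$, contributing one additional constant. Together with the trivial generator of linearity $\Sym_\beta$, this yields exactly a six-dimensional Lie algebra of symmetries, which also settles the \emph{only} clause of the proposition via Theorem~\ref{thm:Lie_symm_existence}.

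Step two is where the actual computation lies. Choosing the basis vector $\tau=e^{\mp 2\sqrt{\A}t}$ with matching $\rho$ and $\eta$, the flow equation $d\T/d\epsilon=e^{\mp 2\sqrt{\A}\T}$ integrates to $\T=-\ln(\epsilon+e^{\mp 2\sqrt{\A}t})/(2\sqrt{\A})$. After the substitution $y=x+2\B/\A$ (which absorbs the inhomogeneity produced by $\B$ in the expression for $\rho$), the orbit equation for $\X$ becomes linear of first order and solves to $\X+2\B/\A=y/\sqrt{1+\epsilon e^{\mp 2\sqrt{\A}t}}$, giving the claimed $\X_{\Sym_{1,2}}$. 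The multiplier $f(x,t)$ is then obtained by integrating $d\U/d\epsilon=\alpha(\X,\T)\U$ along the orbit; completing the square in $y$ produces the Gaussian factor $\exp(\mp(\sqrt{\A}/2)(x+2\B/\A)^2)$, while the temporal piece of the integral gives $\exp(2\sqrt{\A}(\nu-1/4\pm 1/4)t)$, with the constant $\nu$ emerging from the $\mu'(x)$ contribution to $\alpha$ and matching the linearization index of $\THsecond$. For $\rho=e^{\pm\sqrt{\A}t}$ with $\tau=0$ the orbit is a pure translation $\X=x-\epsilon e^{\pm\sqrt{\A}t}$, and integrating $\alpha$ along this flow produces the asymmetric exponential factor of $\Sym_{3,4}^\epsilon$. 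The remaining symmetries $\Sym_{5}^\epsilon$, $\Sym_{6}^\epsilon$ and $\Sym_\beta$ correspond respectively to $\tau=1$, to a constant $\eta$, and to the linearity of $\LL-\partial_t$.

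The chief obstacle I anticipate is bookkeeping rather than mathematics: the parameter $\nu$ and the two sign choices in $\Sym_{1,2}^\epsilon$ must be tracked carefully through the completion of the square in the Gaussian factor, and the change of variables $y=x+2\B/\A$ must be applied consistently in both the orbit equation for $\X$ and the integration that defines $f$, otherwise spurious $\B$-dependent cross terms survive. Once those reductions are in place the remainder is mechanical algebra that parallels the $\Ffirst$ case step-for-step.
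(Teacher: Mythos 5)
Your proposal is correct and follows essentially the same route as the paper, which likewise proves this proposition by solving the determining system (\ref{eq:main_system_for_symmetries}) (here with $\D=0$ so that $\rho$ is unconstrained) and then exponentiating the resulting six-dimensional algebra via (\ref{eq: systems to determine action of vector field}); in fact you supply more detail than the paper, which simply declares the computation analogous to the $\Ffirst$ case. Your dimension count (three from $\tau$, two from the homogeneous part of $\rho$, one from the quadrature for $\eta$, plus $\Sym_\beta$) and the centering substitution $y=x+2\B/\A$ match what the stated formulas require.
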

\begin{proof}
Proof is fully analogical to the proof of Proposition \ref{prop:symm_F1}.
\end{proof}
\begin{proposition}
\label{prop:two_params_F2}
Any Lie symmetry (\ref{eq:Basic_transform}) such that $\T(0) = 0$ is one of the following two-parametric symmetries or its composition:
\begin{eqnarray}
\label{eq:two_parametric_symmetry_explicit_form_F2_s1}
\left. { \begin{array}{l}
\Sym_{1}^{\alpha, \beta}: \, \frac{u(x,t)}{\U \left(
	\X_{\Sym_{1}},
	\T_{\Sym_1} \right)} 
= \frac{\THsecond\left( x \right)}{\THsecond\left(\X_{\Sym_{1}}\right)} 

\frac{e^{ 
		-\frac{\sqrt{\A} \left(x + 2\B / \A \right)^2 \left[(\beta + 1) \T_{+} - (\alpha + 1)\T_{-} \right] }{2\T_{+} \T_{-}}}}
	{\T_{+}^{-\nu} \T_{-}^{\nu - 1/2}}  
\\
\label{eq:two_parametric_symmetry_explicit_form_F2_s2}
\Sym_{2}^{\alpha, \beta}: \, 
\frac{u(x,t)}{\U(\X,t)} =  \frac{\THsecond\left( x \right)}
{\THsecond\left(\X\right)}
\frac{e^{ -\frac{\sqrt{\A}}{2} \left( \alpha e^{\sqrt{\A} t} - \beta e^{-\sqrt{\A} t}\right) \left[
		x +2\B / \A \right] }}
{e^{ +\frac{\sqrt{\A}}{2} \left( \alpha e^{\sqrt{\A} t} - \beta e^{-\sqrt{\A} t}\right) \left[
		\X + 2\B / \A \right] }}
\end{array} } \right.
\end{eqnarray}
Here $\X_{\Sym_{1}}$, $\X_{\Sym_{2}}$, $\T_{-}$, $\T_{+}$ and $\T_{\Sym_{1}}$ are defined as
\begin{equation}
\label{eq:T_plus_minus}
\begin{array}{c}
\X_{\Sym_{1}} = \frac{(x + 2\B/\A)\sqrt{\alpha + \beta + 1}}{\sqrt{ \T_{+} \T_{-}}} - \frac{2 \B}{\A}, \quad 
\X_{\Sym_2} = x - \alpha e^{\sqrt{\A} t} - \beta e^{-\sqrt{\A}t},
\\
\T_{-} = \beta + 1 + \alpha e^{-\ExpArgScnd}, \quad \T_{+} = \alpha + 1 + \beta e^{\ExpArgScnd},
\\
 \T_{\Sym_{1}} = t + \frac{\ln \left(\T_{-} / T_{+}\right)}{2\sqrt{A}} .
\end{array}
\end{equation}
\end{proposition}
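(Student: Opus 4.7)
The strategy mirrors that of Proposition~\ref{prop:two_params_F1}. First, sort the six one-parametric generators in (\ref{eq:explicit_symm_F2}) according to whether they can participate in a symmetry satisfying $\T(0)=0$. The generators $\Sym_3^\epsilon$ and $\Sym_4^\epsilon$ leave $t$ fixed and are automatically compatible; $\Sym_6^\epsilon$ and $\Sym_\beta$ act only on $u$ and so are absorbed into the prefactor $f(x,t)$. In contrast, $\Sym_5^\epsilon$ shifts $\T(0)$ by $\epsilon$, and each nontrivial $\Sym_1^\epsilon,\Sym_2^\epsilon$ gives $\T_{\Sym_{1,2}}(0)=-\ln(1+\epsilon)/(2\sqrt{\A})\neq 0$; however, the violations produced by these three can be cancelled against one another.

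Second, realize each proposed family as an explicit composition of admissible generators. For $\Sym_2^{\alpha,\beta}$ the composition is simply $\Sym_3^\alpha\circ\Sym_4^\beta$, since both factors leave $t$ invariant. For $\Sym_1^{\alpha,\beta}$ take a three-factor composition drawn from $\Sym_1^\epsilon,\Sym_2^\epsilon,\Sym_5^\epsilon$ and solve the three parameters as functions of $(\alpha,\beta)$ so that the composite $(\X,\T)$ agrees with (\ref{eq:T_plus_minus}) and the constraint $\T(0)=0$ is enforced; the identity $\T_+(0)=\T_-(0)=\alpha+\beta+1$ makes the vanishing automatic. The prefactors displayed in (\ref{eq:two_parametric_symmetry_explicit_form_F2_s1}) are then recovered by direct multiplication of the corresponding single-parameter prefactors. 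Closure is the final ingredient: for every $k\in\{1,\dots,6\}$, every $\epsilon$, every $i\in\{1,2\}$, and every $(\alpha,\beta)$ one must exhibit $(\hat\alpha,\hat\beta)$ (possibly with a compensating $\Sym_5^\delta$ reabsorbed into the two-parameter form) such that
\[
\Sym_k^\epsilon\circ\Sym_i^{\alpha,\beta}\;=\;\Sym_i^{\hat\alpha,\hat\beta}
\]
modulo $\Sym_6^\epsilon$ and $\Sym_\beta$. Together with the construction above, this shows that the orbit of $\Sym_1^{\alpha,\beta}\cup\Sym_2^{\alpha,\beta}$ exhausts all Lie symmetries of the form (\ref{eq:Basic_transform}) with $\T(0)=0$.

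The main obstacle will be the algebraic bookkeeping in the closure step: the reparametrizations arising from composition with $\Sym_1^\epsilon,\Sym_2^\epsilon$ involve rational functions of $e^{\pm 2\sqrt{\A}t}$ that must be simplified into the compact form of $\T_\pm$ in (\ref{eq:T_plus_minus}), and the quadratic-in-$x$ exponents with coefficient $\pm\sqrt{\A}/2\,(x+2\B/\A)^2$ must combine consistently with the $\THsecond$ factors so that no residual Gaussian prefactor is left over. A final dimension count against the six-dimensional Lie algebra of infinitesimal symmetries (two generators absorbed as prefactor, one time shift eliminated by the constraint $\T(0)=0$) then confirms that exactly two free parameters remain for each family, matching the displayed formulas.
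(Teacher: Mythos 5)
Your plan takes essentially the same route as the paper: the paper's proof is just a pointer to Proposition \ref{prop:two_params_F1}, whose argument consists precisely of the closure observation that composing a one-parametric symmetry with $\Sym_{i}^{\alpha,\beta}$ yields $\Sym_{i}^{\hat{\alpha},\hat{\beta}}$ (modulo the trivial symmetries $\Sym_6$ and $\Sym_\beta$), which is the heart of your argument as well. Your extra bookkeeping --- sorting generators by compatibility with $\T(0)=0$, realizing $\Sym_2^{\alpha,\beta}$ as $\Sym_3^{\alpha}\circ\Sym_4^{\beta}$ and $\Sym_1^{\alpha,\beta}$ as a three-factor composition constrained by $\T(0)=0$ --- is consistent with and somewhat more detailed than the paper's sketch (only the closing dimension count is slightly miscounted, since $\Sym_\beta$ is not part of the six-dimensional algebra, but nothing in the main argument rests on it).
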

\begin{proof}
Proof is fully analogical to the proof of Proposition \ref{prop:two_params_F1}.
\end{proof}
\begin{thm}
	\label{thm:F2_two_params}
Let $T^b$ be the first passage time to the boundary $b(t)$ by the process $X_t$ defined by (\ref{eq:SDE_mu}) with $n=2$.  Consider the following two-parametric family of the curves defined over the curve $b(t)$:
\[
g^{\alpha, \beta} (t) = \frac{\sqrt{\T_{+} \T_{-}}}{\sqrt{1 + \alpha + \beta}}
\left[b \left(t + \frac{\ln\left(\T_{-} / T_{+}\right)}{2\sqrt{\A}}\right) + \frac{2\B}{\A} \right]
-\frac{2\B}{\A}.
\] 
Then, the distribution of the first hitting time to the any curve $g(t)$ from $g^{\alpha, \beta}(t)$ can be explicitly represented in terms of the distribution of $T^b$:
\begin{equation}
\left. { \begin{array} {c} 
\frac{\rho_{x_0}^{T^g}(t)}
{\rho_{\frac{x_0 + 2\B/\A}{\sqrt{\alpha + \beta + 1}}}^{T^b}
	\left(t + \frac{\ln\left(\T_{-} / T_{+}\right)}{2\sqrt{\A}}\right)
} = 
\frac{\T_{-}^{-\nu} \T_{+}^{\nu -1/2}}{\sqrt{\alpha + \beta + 1}} \frac{\THsecond\left(g(t)\right)}{\THsecond\left(b\left(t + \frac{\ln\left(\T_{-} / T_{+}\right)}{2\sqrt{\A}}\right)\right)} \times
\\
\times \frac{\THsecond \left( \frac{x_0 + 2\B/\A}{\sqrt{\alpha + \beta + 1}}\right)}
{\THsecond\left(x_0\right)}
\frac{e^{ 
		-\frac{\sqrt{\A} \left(g(t) + 2\B / \A \right)^2 \left[(\beta + 1) \T_{+} - (\alpha + 1)\T_{-} \right] }{2\T_{+} \T_{-}}}}
{e^{ 
		-\frac{\sqrt{\A} \left(x_0 + 2\B / \A \right)^2 \left[\beta- \alpha \right] }{2\left(1 + \alpha + \beta\right)}}}
\end{array} }
\right.
\end{equation}

Symmetry $\Sym_{2}^{\alpha, \beta}$ from (\ref{eq:two_parametric_symmetry_explicit_form_F1_s2}) corresponds to the following two-parametric family of curves:
\begin{equation}
g^{\alpha,\beta} = b(t) + \alpha e^{\sqrt{\A}t} + \beta e^{-\sqrt{\A}t}.
\end{equation}
Boundary crossing identity has the form 
\begin{equation}
\frac{\rho_{x_0}^{T^g}(t)}
{\rho_{x_0 -\alpha - \beta}^{T^b} \left(t\right) } = 
\frac{\THsecond\left( g(t) \right)}
{\THsecond\left(b(t) \right)}
\frac{e^{ -\frac{\sqrt{\A}}{2} \left( \alpha e^{\sqrt{\A} t} - \beta e^{-\sqrt{\A} t}\right) \left[
		g(t) +2\B / \A \right] }}
{e^{ +\frac{\sqrt{\A}}{2} \left( \alpha e^{\sqrt{\A} t} - \beta e^{-\sqrt{\A} t}\right) \left[
		b(t) + 2\B / \A \right] }}
\end{equation}
\end{thm}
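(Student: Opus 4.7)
The plan is to apply Theorem \ref{thm:main} directly with each of the two two-parametric symmetries $\Sym_{1}^{\alpha,\beta}$ and $\Sym_{2}^{\alpha,\beta}$ from Proposition \ref{prop:two_params_F2}. Since we are in the time-homogeneous, unit-variance setting ($\sigma\equiv 1$), formula (\ref{eq:rho_transform}) collapses to
\[
\frac{\rho^{T^g}_{x_0}(t)}{\rho^{T^b}_{\X(x_0,0)}(\T(t))} = \frac{f(g(t),t)}{f(x_0,0)}\,\frac{\partial\X/\partial x|_{x=g(t)}}{\Psi'(\X(x_0,0))},
\]
so the task reduces to reading off the data $(\X(x,t),\T(t),f(x,t))$ from each symmetry, inverting $\X(\cdot,0)$ to obtain $\Psi$, solving $\X(g(t),t)=b(\T(t))$ for the transformed boundary, and simplifying.

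For $\Sym_{1}^{\alpha,\beta}$ I would identify from (\ref{eq:two_parametric_symmetry_explicit_form_F2_s1}) the change of variables $\X_{\Sym_1}(x,t) = (x+2\B/\A)\sqrt{1+\alpha+\beta}/\sqrt{\T_{+}\T_{-}} - 2\B/\A$ and $\T_{\Sym_1}(t)=t+\ln(\T_{-}/\T_{+})/(2\sqrt{\A})$, with $f$ the product of the $\THsecond$-ratio, the Gaussian exponential, and the algebraic weight $\T_{+}^{\nu}\T_{-}^{-\nu+1/2}$. At $t=0$ one has $\T_{+}=\T_{-}=1+\alpha+\beta$, which collapses $\X(x,0) = (x+2\B/\A)/\sqrt{1+\alpha+\beta}-2\B/\A$, giving $\Psi'\equiv\sqrt{1+\alpha+\beta}$ and $\X(x_0,0)=(x_0+2\B/\A)/\sqrt{1+\alpha+\beta}-2\B/\A$, which is exactly the subscript appearing in the theorem. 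Solving $\X(g(t),t)=b(\T(t))$ is linear in $g(t)+2\B/\A$ and yields the claimed family $g^{\alpha,\beta}(t)$. The derivative $\partial_x\X$ is simply $\sqrt{1+\alpha+\beta}/\sqrt{\T_{+}\T_{-}}$, so the prefactor ratio $(\partial_x\X)/\Psi'$ contributes $1/\sqrt{\T_{+}\T_{-}}$. Combining this with $f(g(t),t)/f(x_0,0)$ — where the $f(x_0,0)$ denominator is clean precisely because $\T_{\pm}|_{t=0}$ coincide — gives the stated identity after routine simplification.

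For $\Sym_{2}^{\alpha,\beta}$ the bookkeeping is considerably easier: from (\ref{eq:two_parametric_symmetry_explicit_form_F2_s2}) we have $\X(x,t)=x-\alpha e^{\sqrt{\A}t}-\beta e^{-\sqrt{\A}t}$ and $\T(t)=t$, so $\partial_x\X\equiv 1$, $\Psi'\equiv 1$, $\X(x_0,0)=x_0-\alpha-\beta$, and $g(t)=b(t)+\alpha e^{\sqrt{\A}t}+\beta e^{-\sqrt{\A}t}$. Plugging the $f$ factor directly into the reduced formula yields the second identity without further manipulation.

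The main obstacle is purely algebraic: checking that in the $\Sym_1^{\alpha,\beta}$ computation, the Gaussian exponentials at $(g(t),t)$ and at $(x_0,0)$ combine with the $\T_\pm$ algebraic weights precisely into the exponential ratio stated in the theorem. The key identity one needs is $g(t)+2\B/\A=\bigl(b(\T(t))+2\B/\A\bigr)\sqrt{\T_{+}\T_{-}}/\sqrt{1+\alpha+\beta}$, which comes from the defining equation of $g$; using this to rewrite the numerator exponential in $f(g(t),t)$ in terms of $b(\T(t))+2\B/\A$ (or conversely, separating the $(g(t)+2\B/\A)^2$ term with coefficient $[(\beta+1)\T_{+}-(\alpha+1)\T_{-}]$) is where careful checking is required, but no genuinely new idea is involved beyond Proposition \ref{prop:two_params_F2} and Theorem \ref{thm:main}.
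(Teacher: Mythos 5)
Your proposal is correct and follows exactly the paper's own route: the paper proves this theorem by citing Theorem \ref{thm:main} applied with the two-parametric symmetries of Proposition \ref{prop:two_params_F2} (``fully analogical'' to Theorem \ref{thm:F1_two_params}), which is precisely the computation you carry out, and your prefactor bookkeeping ($\partial_x\X/\Psi'=1/\sqrt{\T_{+}\T_{-}}$ combining with $\T_{+}^{\nu}\T_{-}^{-\nu+1/2}$ and $(1+\alpha+\beta)^{-1/2}$ to give $\T_{-}^{-\nu}\T_{+}^{\nu-1/2}/\sqrt{1+\alpha+\beta}$) checks out. If anything, your more explicit derivation exposes two apparent typos in the stated theorem (the subscript of $\rho^{T^b}$ should carry the shift $-2\B/\A$, and the second identity as printed omits the $f(x_0,0)$ normalization), but that does not affect the validity of your approach.
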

\begin{proof}
	Proof is fully analogical to the proof of Theorem \ref{thm:F1_two_params}.
\end{proof}

\begin{thm}
\label{thm:F2_to_BM}
Let $X_t$ be a process (\ref{eq:SDE_mu}) with $n =2$.  Let's consider the following stopping times
\[
  T^b = \inf \left\{t \geq 0, \, W_t \leq b(t)\right\}, 
  \quad 
  T^g = \inf \left\{t \geq 0, \, X_t \leq g(t)\right\}
\]
where functions $b(t)$ and $g(t)$ have the following relation:
	\[
	g(t) = \frac{e^{-\sqrt{\A}t}}{\sqrt[4]{4\A}}b\left(e^{2\sqrt{\A} t} - 1\right) - \frac{2\B}{\A}.
	\]
The density $\rho^{T^g}$ of the distribution of $T^g$ can be explicitly represented from the following identity:
	\begin{equation}	
	\label{eq:rho_F2_toBM}
	\frac{\rho^{T^g}_{x_0} (t)}
	{\rho^{T^b}_{w_0} \left(e^{2\sqrt{\A} t}  - 1\right)} = 
	\frac{2\sqrt{\A}}{e^{-2(\nu +1) \sqrt{\A} t}}
	\frac{\THsecond\left(g(t)\right)}{\THsecond(x_0)}
	\frac{e^{\frac{\sqrt{\A}\left(g(t) + 2\B / \A\right)^2}{2}}}{e^{\frac{\sqrt{\A}\left(x_0 + 2\B / \A\right)^2}{2} }}. 
	\end{equation}
Laplace transform of $T^g$ has the following representation:
\begin{equation}
\begin{array}{r} 
\mathbb{E}_{x_0}\left[e^{-\lambda T^g} \right] =
\frac{e^{-w_0^2 / 4}}{ \THsecond(x_0) }\mathbb{E}_{w_0}\left[
\frac{e^{ W_{T^b}^2 (T^b + 1)  / 4}}{\left(T^b + 1 \right)^{-\nu +\lambda / 2\sqrt{\A}} }
\THsecond\left(\frac{\sqrt{T^b + 1}W_{T^b}}{\sqrt[4]{4\A}} - \frac{2\B}{\A} \right)\right],
\\
w_0 = \sqrt[4]{4\A} (x_0 + 2\B / \A).
\end{array}
\end{equation}
\end{thm}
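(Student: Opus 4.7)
The plan is to apply Theorem \ref{thm:main_ch_mes} with $Y_t$ a standard Brownian motion, so that $\GG=\tfrac12\partial_{\X}^2$, and then obtain the Laplace transform by integrating the resulting density identity against $e^{-\lambda t}$ and performing a single change of variable. The task therefore reduces to exhibiting an explicit triple $(f,\X,\T)$ realising the transformation (\ref{eq:Basic_transform}) between solutions of $(\GG-\partial_\T)\U=0$ and solutions of $(\LL-\partial_t)u=0$, with $\T(0)=0$ and $\X(g(t),t)=b(\T(t))$.

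I propose the candidate
\begin{equation*}
\T(t)=e^{2\sqrt{\A}\,t}-1, \qquad \X(x,t)=\sqrt[4]{4\A}\,e^{\sqrt{\A}\,t}\bigl(x+2\B/\A\bigr),
\end{equation*}
\begin{equation*}
f(x,t)=\THsecond(x)\exp\!\Bigl((2\nu+1)\sqrt{\A}\,t+\tfrac{\sqrt{\A}}{2}(x+2\B/\A)^2\Bigr).
\end{equation*}
Both $\T(0)=0$ and $\X(g(t),t)=b(\T(t))$ follow directly from the definition of $g$, so the hypothesis (\ref{eq:g_def}) is met. The first (and main) step is to verify that $u(x,t)=f(x,t)\U(\X(x,t),\T(t))$ satisfies $(\LL-\partial_t)u=0$ whenever $\U$ satisfies the heat equation. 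I carry this out in two substeps: first, substitute $u=\THsecond(x)v$, which, using $\THsecond'/\THsecond=\mu$ together with the Riccati relation $\mu'+\mu^2=\A x^2+4\B x+2\C$, transforms $(\LL-\partial_t)u=0$ into the shifted harmonic-oscillator equation $v_t=\tfrac12 v_{xx}-\tfrac12\bigl(\A(x+2\B/\A)^2+2\C-4\B^2/\A\bigr)v$; second, apply the Mehler-type substitution with the linear dilation $a(t)=\sqrt[4]{4\A}\,e^{\sqrt{\A}\,t}$, which satisfies the Riccati $a''/a-(a'/a)^2=\A$ and produces exactly the reparameterisations $\X,\T$ above and the exponent $(2\nu+1)\sqrt{\A}t+\sqrt{\A}(x+2\B/\A)^2/2$ via the definition of $\nu$. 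This verification is mechanical but tedious; it is the main obstacle.

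Once the transformation is established, I invoke Theorem \ref{thm:main_ch_mes}. In (\ref{eq:rho_transform}) we have $\sigma\equiv\Sigma\equiv 1$, $\partial_x\X|_{x=g(t)}=\sqrt[4]{4\A}\,e^{\sqrt{\A}t}$, and $\Psi(\mathscr{Y})=\mathscr{Y}/\sqrt[4]{4\A}-2\B/\A$ so that $\Psi'(\X(x_0,0))=1/\sqrt[4]{4\A}$. Substituting these along with the chosen $f$, the right-hand side of (\ref{eq:rho_transform}) collapses to the right-hand side of (\ref{eq:rho_F2_toBM}); note in particular that $w_0=\X(x_0,0)$ and $w_0^2/4=\sqrt{\A}(x_0+2\B/\A)^2/2$, which handles the prefactor $e^{-w_0^2/4}/\THsecond(x_0)=1/f(x_0,0)$.

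Finally, for the Laplace transform I write $\mathbb{E}_{x_0}[e^{-\lambda T^g}]=\int_0^\infty e^{-\lambda t}\rho^{T^g}_{x_0}(t)\,dt$, plug in (\ref{eq:rho_F2_toBM}), and change variables $s=e^{2\sqrt{\A}t}-1$, so $dt=ds/(2\sqrt{\A}(s+1))$, $e^{-\lambda t}=(s+1)^{-\lambda/(2\sqrt{\A})}$, $e^{2(\nu+1)\sqrt{\A}t}=(s+1)^{\nu+1}$, and $e^{\sqrt{\A}t}=\sqrt{s+1}$. Using $\rho^{T^b}_{w_0}(s)\,ds=\mathbb{P}_{w_0}(T^b\in ds)$ together with the identification $W_{T^b}=b(T^b)$ almost surely on $\{T^b<\infty\}$, the deterministic $b(s)$ is replaced by $W_{T^b}$, and the integral rewrites as the stated expectation. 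The only non-routine ingredient throughout is the PDE verification in the first step; the remaining work is algebraic bookkeeping and one change of variable.
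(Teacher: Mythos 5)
Your proposal is correct and follows essentially the same route as the paper: the paper's proof exhibits exactly the same triple, namely $u(x,t)=\THsecond(x)\,e^{\sqrt{\A}(x+2\B/\A)^2/2+(2\nu+1)\sqrt{\A}\,t}\,\U\bigl(\sqrt[4]{4\A}e^{\sqrt{\A}t}(x+2\B/\A),\,e^{2\sqrt{\A}t}-1\bigr)$, and then invokes Theorem \ref{thm:main_ch_mes}, just as you do. (One cosmetic slip in your verification sketch: your dilation $a(t)=\sqrt[4]{4\A}\,e^{\sqrt{\A}t}$ satisfies $a''=\A a$ rather than $a''/a-(a'/a)^2=\A$, but the candidate $(f,\X,\T)$ itself checks out by direct substitution.)
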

\begin{proof}
Proof is fully analogical to the proof of Theorem \ref{thm:F1_to_BM}. Basic relation (\ref{eq:Basic_transform}) has the following form: 
\begin{equation}
\nonumber
u(x,t) = \THsecond(x) e^{\frac{\sqrt{\A}\left(x + 2\B / \A\right)^2}{2} + \sqrt{\A} (2 \nu +1) t} 
\U \left(
\sqrt[4]{4\A} e^{\sqrt{\A}t} \left(x + \frac{2\B}{\A}\right), e^{2\sqrt{\A} t}  - 1
\right).
\end{equation}
\end{proof}
\begin{example}
	Consider Ornstein--Uhlenbeck process with zero mean and unit variance:
	\[
	dX_t= -\lambda X_t dt + dW_t, \quad X_0 = x_0.
	\]
	For this process we have $\A = \lambda ^2$, $\B = 0$, $\C = -\lambda/2$ and $\theta(x) = e^{-\frac{ \lambda x^2}{2}}$. Consider the following stopping times:
	\begin{equation}
	\nonumber
	\begin{array}{c}
		T^b = inf\left\{t \geq 0, \,\, W_t \geq b(t)  \right\}, \, W_0 = -\sqrt{2\lambda} x_0, 
		\,b(t)= a \sqrt{2\lambda(t+1)}, 
		\\
	T^g = \inf\left\{t\geq0, \, X_t \geq g(t)\right\}, \quad g(t) \equiv a,
	\end{array}
	\end{equation}
	The law of $T^b$ is known and can be found in \cite{Novikov Korzakhia} ($a = -\sqrt{2\lambda}x_0$, $b = a\sqrt{2\lambda}$, $c = 1$).
	\begin{equation}
	\label{eq:rho_sq_root_Wt}
	\rho^{T^b}_{-\sqrt{2\lambda} x_0}(t) = -\sum_{n = 1}^{\infty}\frac{HK(v_n, x \sqrt{2\lambda}) }{\partial/\partial_v HK(v_n, a\sqrt{2\lambda})} \left(t + 1\right)^{-v_n - 1}.
	\end{equation}
	Here function $HK(v, z)$ is defined as
	\begin{eqnarray}
	\label{eq:HKtoD}
	HK(v, z) &=& \int_{0}^{\infty} exp(zt - t^2/2) t^{-2v-1} dt,
	\\ \nonumber
	&=& D_{2\nu}(-z) exp(z^2 /4) / \Gamma(-2v), \quad \Re(v) <0.
	\end{eqnarray}
	Application of Theorem \ref{thm:F2_to_BM} yields formula for the law of stopping time $T^g$: 
	\[
		\rho_{x}^{T^g} (t) = -2\lambda \sum_{n = 1}^{\infty}\frac{HK(v_n, x\sqrt{2\lambda}) }{\partial/\partial_v HK(v_n, a\sqrt{2\lambda})} \exp\left(-2 v_n \lambda t \right).
	 \]
     Denote $v_{n, -a\sqrt{2\lambda}} = 2 v_n$ and apply relation (\ref{eq:HKtoD}) between function $HK(v,z)$ and parabolic cylinder function $D_{\nu}(z)$ (in the result we have formula from \cite{OU_density}):  	
	\[
		\rho_{x_0}^{T^g}(t) = - \lambda e^{\lambda(x_0^2 - a^2)/2} \sum_{j = 1}^{\infty} \Theta_{j} (x_0),
	\]
	Here
	\[
	\Theta_{j} \left(z\right) = \frac{ D_{v_{j, -a\sqrt{2\lambda}}}\left( -z \sqrt{2\lambda} \right) }
	{D'_{v_{j, -a\sqrt{2\lambda}}}\left( -a\sqrt{2\lambda}\right)} \exp(-\lambda v_{j, -a\sqrt{2\lambda}} t)
	\]
	and  $D'_{v_{j,b}}(b) = \frac{\partial D_v(b)}{\partial v}|_v =v_{j,b}$.  
	
	Now we turn to the following stopping times 
\begin{equation}
\nonumber
\begin{array}{c}
T^h = \inf\left\{ t\leq 0, \, X_t \leq a + \alpha e^{-\lambda t} + \beta e^{\lambda t} \right\}, 
\\ 
T^q = \inf\left\{ t\leq 0, \, X_t \leq \sqrt{\frac{\left( \alpha + 1 + \beta e^{2\lambda t}\right) 
		\left( \alpha e^{-2\lambda t} + 1 + \beta \right)}
	{1 + \alpha + \beta}}\right\}.
\end{array}
\end{equation}
The distribution of $T^h$ can be found in \cite{Bounocore}. Application of Theorem \ref{thm:F2_two_params} yields the following explicit formulas:
\begin{eqnarray}
\left. \begin{array}{c}
\rho_{x_0}^{T^h}(t) = 
- \lambda  
e^{-\lambda \alpha^2 \left( e^{2\lambda t} + 1 \right) 
	-2\lambda \alpha \left(e^{\lambda t} a + \beta - x_0\right)} 
\times
\\ 
\times
e^{\lambda((x_0 - \alpha - \beta)^2 - a^2)/2} 
\sum_{j = 1}^{\infty} \Theta_j(x_0 - \alpha - \beta)
\end{array} \right.
\end{eqnarray}
and
\begin{eqnarray}
\left. \begin{array}{c}
\rho_{x_0}^{T^q}(t) = -\lambda e^{\lambda \frac{x_0 (\beta - \alpha + 1) - a^2\left[2\beta ( \alpha  e^{-2\lambda t } + \beta + 1) + \alpha + \beta + 1\right] }{\alpha +\beta + 1}}
\\ 
\cdot
\sum_{j = 1}^{\infty} \Theta_{j} \left(\frac{x_0 \sqrt{2\lambda}}
{\alpha + \beta + 1} \right) 
\left(\frac{\alpha + 1 + \beta e^{-2\lambda t}}{\beta + 1 + \alpha e^{2\lambda t}} \right) ^{v_{j, -a\sqrt{2\lambda}} / 2}
\end{array} \right.
\end{eqnarray}
\end{example}
\begin{remark}
Based on Theorem \ref{thm:F2_to_BM} and first passage time for quadratic curve by Brownian motion we can derive closed form formulas for the distributions of first hitting times of the following curves 
\[
\alpha e^{\lambda t} + \beta e^{-\lambda t} + a e^{\pm 3 \lambda t}
\]
by Ornstein--Uhlenbeck process.
\end{remark}

\begin{example}
Now we turn to the following diffusion process 
\[
dX_t = \left( -3\lambda X_t  + \frac{2\sqrt{2 \lambda }}{\pi}  \frac{e^{-\lambda X_t^2}}{1 - erf \left(\sqrt{\lambda} X_t \right)}\right) dt + dW_t.
\]
This case corresponds to specifications $ \A = -\lambda^2$ $\B = 0$, $\C = 3 \lambda /2$ ($\nu = -1/2$). Function $\theta (x)$ is equal
\[
\theta (x) = D_{-1} (x) = e^{x^2 / 4} \sqrt{\frac{\pi}{2}}\left[ 1 - erf(x / \sqrt{2})\right],
\]
Here $erf(z)$ is the error function:
\[
erf(z) = \frac{2}{\sqrt{\pi}}\int_{0}^{z} e^{-t^2} dt.
\]
The distribution of stopping time $T^g = \inf\left\{t \geq 0, \, X_t \leq b \right\}$ can be explicitly found by application of Theorem \ref{thm:F2_to_BM} to the formula (\ref{eq:rho_sq_root_Wt})
\begin{eqnarray}
\rho_{x_0}^{T^g} (t) = - \lambda e^{\lambda(a^2 - x_0^2)/2} \frac{1- erf(\sqrt{\lambda} a)}{1- erf(\sqrt{\lambda} x_0)} 
\sum_{j = 1}^{\infty} \Theta_{j}(x_0).
\end{eqnarray}
\end{example}
\subsection{Drift functions from $\Fthird : \mu^2 + \mu' = 2 \C + \D/ x^2$}
Consider the process $X_t$ defined by SDE (\ref{eq:SDE_mu}) with $n = 3$. 
\begin{proposition}
The solution of equation (\ref{eq:theta}) for $n=3$ is the following function
\begin{equation}
\THthird(x) = \left\{ {\begin{array}{l}
	c_1 \sqrt{x} I_{\sqrt{1/4 + \D}} \left(\sqrt{2\C}x\right) + 
	c_2 \sqrt{x} K_{\sqrt{1/4 + \D}} \left(\sqrt{2\C}x\right) \\
	c_1 x^{1/2+\sqrt{1/4+\D}} + c_2 x^{1/2-\sqrt{1/4+\D}}, \, \C = 0
	\end{array}} \right.
\end{equation}
where $I_{\nu} (z)$ and $K_{\nu} (z)$ are modified Bessel functions, for definitions and properties see \cite{AbraStegun}.
\end{proposition}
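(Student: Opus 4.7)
The plan is to reduce the Riccati equation for $\mu$ to a linear second order ODE through the standard linearization $\mu(x) = \theta'(x)/\theta(x)$, which converts (\ref{eq:theta}) into
\begin{equation}
\nonumber
\theta''(x) = \left(2\C + \frac{\D}{x^2}\right) \theta(x).
\end{equation}
From here the approach naturally splits along the two cases stated in the proposition.

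For $\C \neq 0$, I would perform the change of variables $z = \sqrt{2\C}\, x$ together with the substitution $\theta(x) = \sqrt{x}\,\phi(x)$. The $\sqrt{x}$ prefactor is chosen precisely to absorb the first-order term produced by the dilation and leave the standard modified Bessel equation
\begin{equation}
\nonumber
z^2 \phi''(z) + z \phi'(z) - \left(z^2 + \tfrac{1}{4} + \D\right)\phi(z) = 0,
\end{equation}
whose linearly independent solutions are $I_\nu(z)$ and $K_\nu(z)$ with index $\nu = \sqrt{1/4 + \D}$. Undoing the substitution reproduces the first branch of $\THthird(x)$. As in the proofs of the preceding propositions, this identification is the analogous formula in \cite[Appendix~4]{BorodinSalminen} with $\lambda = \C$ after matching parameters.

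For $\C = 0$, the linearized equation collapses to the Euler equation $x^2 \theta''(x) = \D\,\theta(x)$. Substituting the ansatz $\theta(x) = x^\alpha$ produces the indicial equation $\alpha(\alpha - 1) = \D$, whose roots $\alpha_\pm = 1/2 \pm \sqrt{1/4 + \D}$ yield the second branch directly.

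The only step requiring genuine care is the index computation: the $\sqrt{x}$ substitution shifts the angular coefficient by $1/4$, so the correct Bessel index is $\sqrt{1/4 + \D}$ rather than $\sqrt{\D}$, and this same shift is what makes the exponents in the Euler case come out as $1/2 \pm \sqrt{1/4 + \D}$ — providing a consistency check between the two branches as $\C \to 0$. Beyond this bookkeeping, both cases reduce to tabulated ODE solutions and there is no substantive obstacle.
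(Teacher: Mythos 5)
Your proof is correct and follows the same underlying route as the paper: the linearization $\mu=\theta'/\theta$ turns the Riccati equation into $\theta''=(2\C+\D/x^2)\theta$, and the paper then simply cites the corresponding entry (formula 4 with $\gamma^2=1/4+\D$, $\lambda=\C$) in Appendix 4 of Borodin--Salminen, whereas you verify that tabulated formula by hand via the substitutions $\theta=\sqrt{x}\,\phi$, $z=\sqrt{2\C}\,x$ and the Euler equation in the $\C=0$ case. Your index computation $\nu=\sqrt{1/4+\D}$ and the exponents $1/2\pm\sqrt{1/4+\D}$ both check out, so the argument is a complete, self-contained version of the paper's one-line citation.
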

\begin{proof}
Apply formulas 4 with $\gamma^2 = 1/4 + \D$ and $\lambda = \C$ from \cite[~ Appendix 4]{BorodinSalminen}.
\end{proof}
\begin{proposition}
\label{prop:symm_F3}
Let $X_t$ be a diffusion process of the form (\ref{eq:SDE_mu}) with $n = 3$.  FPKE has the following Lie symmetries only
\begin{eqnarray}
\label{eq:explicit_symm_F3}
\left. { \begin{array}{l}

\Sym_1^{\epsilon}: \quad 
\frac{u(x,t)}{\U \left(\frac{x}{1+\epsilon t}, \frac{t}{1 + \epsilon t}\right)} 
=  
\frac{\THthird \left( x\right)} 
{\THthird\left( \frac{x}{1+\epsilon t}\right)}
\frac{
	e^{
	\left[ \frac{x^2}{2t^2}  + \C \right] \left[ \frac{t}{1+\epsilon t} - t\right] }
 }{\sqrt{1+\epsilon t}} 
, 
\\ 
\Sym_2^{\epsilon}: \quad 
\frac{u(x,t)}{\U \left(xe^{-\epsilon/2},\, te^{-\epsilon}\right)} = \frac{\THthird \left( x\right)} 
{\THthird \left(xe^{-\epsilon/2} \right)}
e^{\C t\left[e^{-\epsilon} - 1\right]} 
, 
\\ 
\Sym_3^{\epsilon}: \quad 
\frac{u(x,t)}{\U(x, t + \epsilon)} = 1, 
\\ 
\Sym_4^{\epsilon}: \quad 
\frac{u(x,t)}{\U(x, t)} = e^\epsilon, 
\\ 
\Sym_\beta: \quad 
u(x,t) =  \beta(x,t) + \U\left(x, t \right).
\end{array} } \right.
\end{eqnarray}
\end{proposition}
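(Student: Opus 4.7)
The plan is to proceed exactly as in the proof of Proposition \ref{prop:symm_F1}, specialising the determining system (\ref{eq:main_system_for_symmetries}) to family $\Fthird$ by setting $\A = \B = 0$ while keeping $\D \neq 0$ and $\C$ arbitrary. The last equation $\D \rho = 0$ immediately forces $\rho(t) \equiv 0$; the first reduces to $\tau_{ttt} = 0$, so $\tau(t) = a_0 + a_1 t + a_2 t^2$; the second is automatic; and the third integrates to $\eta(t) = -\tfrac{1}{4}\tau_t - \C\tau + a_3$. Together with the infinite-dimensional branch $\Sym_\beta$ coming from linearity of $\LL - \partial_t$, this yields a four-parameter family of infinitesimal symmetries, so (\ref{eq:main_system_for_symmetries}) has precisely the five listed branches.

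With $\rho \equiv 0$, the components from (\ref{eq: field for time homogeneous}) simplify to $\xi = \tfrac{x}{2}\tau_t$ and $\alpha = -\tfrac{x^2}{4}\tau_{tt} + \tfrac{x}{2}\mu(x)\tau_t + \eta$. Setting each of $a_0, a_1, a_2, a_3$ equal to one in turn isolates four basic generators. The cases $a_0$ and $a_3$ exponentiate trivially via (\ref{eq: systems to determine action of vector field}) to $\Sym_3^\epsilon$ (time translation) and $\Sym_4^\epsilon$ (rescaling of $u$). For $a_1$ the field is $\tfrac{x}{2}\partial_x + t\partial_t + (\tfrac{x}{2}\mu(x) - \C t)u\partial_u$; routine integration gives $\X = xe^{-\epsilon/2}$, $\T = te^{-\epsilon}$ (after the sign convention used throughout the paper), and the $\mu$-term exponentiates to the factor $\THthird(x)/\THthird(\X)$ because $\mu = (\ln\THthird)'$. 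This reproduces $\Sym_2^\epsilon$.

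The only non-routine case is the projective generator with $a_2 = 1$, where $\xi = xt$, $\tau = t^2$, and $\alpha = -\tfrac{x^2}{2} + xt\mu(x) - \tfrac{t}{2} - 2\C t^2$. The ODE $d\T/d\epsilon = \T^2$ gives $\T = t/(1+\epsilon t)$, and $d\X/d\epsilon = \X\T$ then yields $\X = x/(1+\epsilon t)$. The $\U$-equation decomposes term by term: the $-\tfrac{x^2}{2}$ piece generates the Gaussian exponential $\exp\bigl((\tfrac{x^2}{2t^2} + \C)(\T - t)\bigr)$; the $-t/2$ piece integrates to $(1+\epsilon t)^{-1/2}$; and the $\mu$-piece once more telescopes into the ratio $\THthird(x)/\THthird(\X)$. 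Collecting these factors reproduces $\Sym_1^\epsilon$. The main obstacle is precisely this last bookkeeping step: one must verify that $\int_0^\epsilon \xi(\X(s),\T(s))\mu(\X(s))ds = \ln\THthird(x) - \ln\THthird(\X(\epsilon))$, which follows upon substituting $\xi = \X\T$ and $d\X/ds = \X\T$ so that the integrand becomes $\mu(\X)\,d\X/ds$, an exact $s$-derivative. Since the solution space of (\ref{eq:main_system_for_symmetries}) under $\A = \B = 0$, $\D \neq 0$ is exactly spanned by $a_0, \dots, a_3$ together with $\Sym_\beta$, the list (\ref{eq:explicit_symm_F3}) is exhaustive.
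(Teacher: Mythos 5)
Your proposal follows essentially the same route as the paper: specialise the determining system (\ref{eq:main_system_for_symmetries}) to $\A=\B=0$, $\D\neq 0$ (so $\rho\equiv 0$, $\tau$ quadratic, $\eta=-\tfrac14\tau_t-\C\tau+a_3$), then exponentiate the four resulting generators via (\ref{eq: systems to determine action of vector field}); the paper's own proof is exactly this, stated without details. The only blemish is a transcription slip in the projective case: with $\tau=t^2$ your own formula for $\eta$ gives $-\tfrac{t}{2}-\C t^2$, not $-\tfrac{t}{2}-2\C t^2$, and it is the coefficient $-\C t^2$ that exponentiates to the factor $e^{\C(\T-t)}$ appearing in $\Sym_1^{\epsilon}$.
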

\begin{proof}
Proof is fully analogical to the proof of Proposition \ref{prop:symm_F1}.
\end{proof}
\begin{proposition}
\label{prop:two_params_symmetries_F3}	
Any Lie symmetry (\ref{eq:Basic_transform}) such that $\T(0) = 0$ has the following two-parametric representation
	\begin{equation}
	\label{eq:two_parametric_symmetry_explicit_form_F3}
\Sym_1^{\alpha, \beta}: \quad \frac{u(x,t)}{\U\left(\frac{\alpha x}{1 + \alpha \beta t}, \frac{\alpha^2 t}{1+ \alpha \beta t}\right)} = 
	\frac{\THthird \left( x\right)} 
{\THthird\left( \frac{\alpha x}{1 + \alpha \beta t} \right)}
	\frac{	e^{-\frac{\alpha \beta x^2}{2 (1 + \alpha \beta t)} +
			\C t \left[\frac{\alpha^2}{1+\alpha \beta t}  - 1\right] }}{\sqrt{1+\alpha \beta t}}  
	\end{equation}
\end{proposition}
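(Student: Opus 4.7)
The plan is to mimic the strategy used in the proof of Proposition~\ref{prop:two_params_F1}, exploiting the restrictions that the constraint $\T(0) = 0$ imposes on the one-parametric symmetries listed in Proposition~\ref{prop:symm_F3}. First I would discard those generators that either violate the constraint or do not change the $(\X,\T)$-structure: $\Sym_3^{\epsilon}$ gives $\T(0) = \epsilon$, so it is only admissible for $\epsilon = 0$; $\Sym_4^{\epsilon}$ merely multiplies $u$ by a constant and $\Sym_{\beta}$ only adds a solution (reflecting linearity of the PDE), so both leave $(\X,\T)$ untouched and can be absorbed at the end into the overall structure. Hence the only non-trivial generators are the inversion-like $\Sym_1^{\epsilon}$ and the scaling $\Sym_2^{\epsilon}$, which together yield at most a two-parameter family.

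Next I would show that every composition of $\Sym_1^{\epsilon}$ and $\Sym_2^{\epsilon}$ can be written in the form (\ref{eq:two_parametric_symmetry_explicit_form_F3}). Setting $\alpha = e^{-\epsilon/2}$ and $\beta=0$ recovers $\Sym_2^{\epsilon}$, while $\alpha = 1$ recovers $\Sym_1^{\beta}$; for the general case, applying $\Sym_2^{-2\ln\alpha}$ first and then $\Sym_1^{\alpha\beta}$ produces precisely the maps
\[
\X = \frac{\alpha x}{1 + \alpha\beta t}, \qquad \T = \frac{\alpha^2 t}{1 + \alpha\beta t},
\]
and, after multiplying the two prefactors and simplifying the ratio of $\THthird$ values together with the exponentials in $x^2/t$ and $\C t$, yields the full prefactor in (\ref{eq:two_parametric_symmetry_explicit_form_F3}).

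Finally, I would verify the closure property, i.e.\ that the family is closed under further composition with any one-parametric generator. By direct substitution one checks that both $\Sym_1^{\epsilon}\circ \Sym_1^{\alpha,\beta}$ and $\Sym_2^{\epsilon}\circ \Sym_1^{\alpha,\beta}$ again have the form $\Sym_1^{\hat{\alpha}, \hat{\beta}}$ with updated parameters (up to an allowed $\Sym_4$ scalar factor). Combined with the fact that no other admissible generators exist, this proves that no additional parameters can arise, so (\ref{eq:two_parametric_symmetry_explicit_form_F3}) exhausts all Lie symmetries satisfying $\T(0) = 0$.

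The main obstacle will be purely computational: the prefactor carries a $\THthird$-ratio, a Gaussian factor in $x$, an exponential in $\C t$, and a power of $(1+\alpha\beta t)$, each of which transforms differently under the one-parametric generators, so the bookkeeping in the composition is delicate. A useful sanity check is that the limits $\alpha \to 1$ and $\beta \to 0$ must reproduce $\Sym_1^{\beta}$ and $\Sym_2^{-2\ln\alpha}$ from (\ref{eq:explicit_symm_F3}) respectively; if both limits are consistent, the closure computation follows from matching parameters in a small linear system.
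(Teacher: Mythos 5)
Your proposal is correct and follows essentially the same route as the paper: the paper's proof simply declares the argument ``fully analogical'' to Proposition~\ref{prop:two_params_F1}, which in turn rests on exactly the closure property you verify, namely that $\Sym_{1}^{\epsilon}\circ\Sym_{1}^{\alpha,\beta}$ and $\Sym_{2}^{\epsilon}\circ\Sym_{1}^{\alpha,\beta}$ again have the form $\Sym_{1}^{\hat{\alpha},\hat{\beta}}$, after discarding the generators excluded by $\T(0)=0$ or absorbed by linearity. Your write-up is in fact more explicit than the paper's (which omits the elimination of $\Sym_3^{\epsilon}$, $\Sym_4^{\epsilon}$, $\Sym_{\beta}$ and the identification of $\alpha,\beta$ with the one-parameter limits), but it is the same proof.
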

\begin{proof}
Proof is fully analogical to the proof of Proposition \ref{prop:two_params_F1}.
\end{proof}

\begin{thm}
	\label{thm:two_params_F3}
	Let $T^b$ be the first passage time to the boundary $b(t)$ by the process (\ref{eq:SDE_mu}) with $n=3$. Symmetry $\Sym_1^{\alpha, \beta}$ generates two parametric family of the curves 
	\begin{eqnarray}
	g^{\alpha, \beta}(t) = \frac{1 + \alpha \beta t}{\alpha} b\left( \frac{\alpha ^2 t}{1 + \alpha \beta t}\right) .
	\end{eqnarray}
	The density function of the first hitting time to any boundary $g(t)$ from $g^{\alpha, \beta}$ is represented as
	\begin{equation}
	\frac{\rho_{x_0}^{T^g} (t)}
	{\rho_{\alpha x_0} ^{T^b} \left( \frac{\alpha^2 t}{1+ \alpha\beta t}\right)}
	= 
	\frac{\alpha^2 	e ^ {\C t \left[\frac{\alpha^2}{1+\alpha \beta t}  - 1\right]}}{(1+\alpha \beta t)^{3/2}} \frac{\THthird(\alpha x_0)}{\THthird(x_0)} \frac{\THthird\left(g(t) \right)}{\THthird\left(b\left(\frac{\alpha^2 t}{1+\alpha \beta t}\right) \right)}
	\frac
	{e^{- \frac{\alpha \beta}{2} \frac{g^2(t)} {1 + \alpha \beta t}}}
	{e^{ -\frac{\alpha \beta}{2} x_0^2}}.
	\end{equation}
\end{thm}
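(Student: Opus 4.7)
The plan is to derive the identity as a direct application of Theorem \ref{thm:main} with the two-parametric symmetry $\Sym_1^{\alpha,\beta}$ from Proposition \ref{prop:two_params_symmetries_F3}. Reading off the ingredients of (\ref{eq:Basic_transform}) from (\ref{eq:two_parametric_symmetry_explicit_form_F3}), I would set
\[
\X(x,t) = \frac{\alpha x}{1+\alpha\beta t}, \qquad \T(t) = \frac{\alpha^2 t}{1+\alpha\beta t}, \qquad f(x,t) = \frac{\THthird(x)}{\THthird(\X)} \cdot \frac{e^{-\frac{\alpha\beta x^2}{2(1+\alpha\beta t)} + \C t\left[\frac{\alpha^2}{1+\alpha\beta t}-1\right]}}{\sqrt{1+\alpha\beta t}}.
\]
Note that $\T(0)=0$, so Theorem \ref{thm:main} applies. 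The map $\Psi$ defined by $\X(\Psi(\mathscr Y),0) = \mathscr Y$ is simply $\Psi(\mathscr Y) = \mathscr Y/\alpha$, hence $\Psi'\equiv 1/\alpha$, and $\X(x_0,0)=\alpha x_0$.

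Next I would verify the boundary correspondence. The definition (\ref{eq:g_def}) demands $\X(g(t),t) = b(\T(t))$, which reads $\alpha g(t)/(1+\alpha\beta t) = b(\alpha^2 t/(1+\alpha\beta t))$; solving for $g(t)$ yields exactly the stated family $g^{\alpha,\beta}(t)$. Then I would compute the remaining pieces entering (\ref{eq:rho_transform}). Because $\sigma\equiv 1$, the ratio $\sigma^2(g(t),t)/\sigma^2(b(\T),\T)$ equals $1$. The derivatives give
\[
\frac{\partial \X/\partial x|_{x=g(t)}}{\Psi'(\X(x_0,0))} = \frac{\alpha/(1+\alpha\beta t)}{1/\alpha} = \frac{\alpha^2}{1+\alpha\beta t}.
\]
Evaluating $f$: at $t=0$ we have $f(x_0,0) = \THthird(x_0) e^{-\alpha\beta x_0^2/2}/\THthird(\alpha x_0)$, while at $(g(t),t)$ the relation $\X(g(t),t)=b(\T)$ allows us to rewrite $\THthird(\X)$ in the denominator of $f$ as $\THthird(b(\T))$.

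Assembling these factors in the main identity (\ref{eq:rho_transform}) produces
\[
\frac{\rho^{T^g}_{x_0}(t)}{\rho^{T^b}_{\alpha x_0}(\T)} = \frac{\alpha^2}{(1+\alpha\beta t)^{3/2}}\cdot \frac{\THthird(\alpha x_0)}{\THthird(x_0)}\cdot \frac{\THthird(g(t))}{\THthird(b(\T))}\cdot e^{\C t\left[\frac{\alpha^2}{1+\alpha\beta t}-1\right]}\cdot \frac{e^{-\alpha\beta g^2(t)/(2(1+\alpha\beta t))}}{e^{-\alpha\beta x_0^2/2}},
\]
which coincides with the claimed formula. There is no genuine obstacle: the argument is pure bookkeeping once the symmetry has been identified. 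The only point that deserves minor care is making sure that the $\THthird$-factors combine correctly after using $\X(g(t),t)=b(\T)$, and that the exponent $-\alpha\beta x_0^2/2$ in $f(x_0,0)$ matches the denominator in the final statement; both fall out by direct substitution.
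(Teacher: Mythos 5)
Your proposal is correct and follows exactly the paper's route: the paper's proof of this theorem is simply an application of Theorem \ref{thm:main} to the two-parametric symmetry $\Sym_1^{\alpha,\beta}$ of Proposition \ref{prop:two_params_symmetries_F3} (stated there as ``fully analogical'' to Theorem \ref{thm:F1_two_params}). Your identification of $\X$, $\T$, $f$, $\Psi(\mathscr{Y})=\mathscr{Y}/\alpha$ and the resulting bookkeeping all check out and in fact supply more detail than the paper itself records.
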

\begin{proof}
Proof is fully analogical to the proof of Theorem \ref{thm:F1_two_params}.
\end{proof}

\begin{thm}
	\label{thm:F3_to_BES}
	Let $X_t$ be a stochastic process with drift $\mu(x) \in \Fthird$. The density $\rho^{T^g}$ of the first hitting time $T^g$ to the curved boundary $g(t)$ by the process $X_t$ can be represented in terms of first hitting time of $T^b$ by the Bessel process $Z_t^\delta$ of dimension $\delta = 2 + \sqrt{4\D + 1}$ started from $Z_0 = x_0$ to the same curve $b(t) \equiv g(t)$:
	\begin{eqnarray}
	\label{eq:rho_F3_toBES}
	\rho^{T^g}_{x_0} (t) = \frac{x_0^{\frac{\delta-1}{2}}}{b(t)^{\frac{\delta - 1}{2}}}\frac{\THthird\left(b(t)\right)}{\THthird(x_0)} e^{-\C t} 
	\rho^{T^b}_{x_0} (t), 
	\end{eqnarray}
	Laplace transform of $T^g$ has the following representation
	\begin{equation}
	\label{eq:Laplace_transform_F3}
	\mathbb{E}_{x_0} \left[e^{-\lambda T^g }\right] = \frac{x_0^{\frac{\delta- 1}{2}}}{\THthird(x_0)} 
	\mathbb{E}_{x_0} \left[ \frac{\THthird\left(Z^{\delta}_{T^b}\right)}{\left(Z^{\delta}_{T^b}\right)^{\frac{\delta-1}{2}}}
	e^{-(\lambda + \C)T^b}
	\right]
	\end{equation}
\end{thm}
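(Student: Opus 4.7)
The plan is to apply Theorem~\ref{thm:main_ch_mes} with $Y_t$ equal to the $\delta$-dimensional Bessel process $Z_t^{\delta}$, $\delta = 2 + \sqrt{4\D+1}$, and with the trivial spatial/temporal components $\X(x,t) \equiv x$, $\T(t) \equiv t$, so that the relation $\X(g(t),t) = b(\T(t))$ collapses to $g \equiv b$. What remains is to exhibit an explicit multiplicative factor $f(x,t)$ connecting the two FPKEs, and then to substitute everything into identity~(\ref{eq:rho_transform}).

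To find $f$, I would look for a transformation of the form $u(x,t) = f(x)\,e^{-\C t}\,\U(x,t)$. Since for $\mu \in \Fthird$ the function $\THthird$ satisfies $\THthird' = \mu\THthird$ and hence $\THthird'' = (\mu' + \mu^2)\THthird = (2\C + \D/x^2)\THthird$, the choice $f(x) = \THthird(x)\,x^{-(\delta-1)/2}$ is natural: the factor $\THthird(x)$ exactly cancels the drift contribution in $\LL$ (killing the $\partial_x\U$ term by $f'/f - \mu = -(\delta-1)/(2x)$), while the power $x^{-(\delta-1)/2}$ converts the Bessel drift into an effective $1/x^2$ potential. Matching the $\U$-coefficient then reduces to the single algebraic identity
\begin{equation*}
4\D = (\delta-1)(\delta-3),
\end{equation*}
which is exactly the defining relation $\delta = 2 + \sqrt{4\D+1}$. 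Thus, with $f(x,t) = \THthird(x)\,x^{-(\delta-1)/2}\,e^{-\C t}$, the function $u = f\,\U$ solves $(\LL-\partial_t)u = 0$ whenever $\U$ solves the Bessel FPKE $(\GG - \partial_t)\U = 0$.

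With this transformation in hand, Theorem~\ref{thm:main_ch_mes} is immediate: since $\X_x \equiv 1$, $\Psi(\mathscr{Y}) = \mathscr{Y}$, $\Psi' \equiv 1$, and the diffusion coefficients of both $X_t$ and $Z_t^{\delta}$ are unity, identity~(\ref{eq:rho_transform}) reduces to $\rho^{T^g}_{x_0}(t)/\rho^{T^b}_{x_0}(t) = f(g(t),t)/f(x_0,0)$, which is precisely~(\ref{eq:rho_F3_toBES}). For the Laplace transform one multiplies both sides of~(\ref{eq:rho_F3_toBES}) by $e^{-\lambda t}$ and integrates; because $Z_{T^b}^{\delta} = b(T^b)$ almost surely by continuity of paths, the resulting integral rewrites as an expectation against $T^b$, yielding~(\ref{eq:Laplace_transform_F3}).

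The main obstacle, and essentially the only nontrivial point, is verifying the coefficient-matching step that pins down $f$. This reduces to a short algebraic computation using $\THthird'' = (2\C + \D/x^2)\THthird$ together with $4\D = (\delta-1)(\delta-3)$; everything else is a direct specialization of Theorem~\ref{thm:main_ch_mes} in which the spatial and temporal transformations are trivial.
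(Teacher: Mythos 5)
Your proposal is correct and follows essentially the same route as the paper: the paper's proof consists precisely of exhibiting the relation $u(x,t)=\THthird(x)\,x^{-(\delta-1)/2}e^{-\C t}\,\U(x,t)$ and invoking Theorem~\ref{thm:main_ch_mes} with trivial $\X$ and $\T$, which is exactly your construction. Your additional verification of the coefficient matching via $4\D=(\delta-1)(\delta-3)$ (equivalent to $\delta=2+\sqrt{4\D+1}$) is correct and fills in the computation the paper leaves implicit.
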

\begin{proof}
Proof is fully analogical to the proof of Theorem \ref{thm:F1_to_BM}. Basic relation (\ref{eq:Basic_transform}) has the following form: $
u(x,t)  = \THthird(x) x^{-(\delta-1)/2} e^{-\C t}\U(x,t)$.
\end{proof}
\begin{example}
	Let's consider the Bessel process with drift $X_t$. This process solves SDE
\begin{equation}
\nonumber
\label{eq: Example Bessel with Drift}
dX_t =  \left( \frac{\omega + 1/2}{X_t} +  \kappa \frac{I_{\omega+1}(\kappa X_t)}{I_{\omega
	}(\kappa X_t)} \right) dt  + dW_t, \quad a > 0, \quad X_0 = x_0,
\end{equation}
where $I_{\nu}(z)$ is modified Bessel function. These processes arise from the radial part of $\delta$-dimensional Brownian Motion with drift and were studied by Pitman and Yor in \cite{Pitman Yor}.  Here function $\theta(x)$, parameters $\C$, $\D$ and dimension $\delta$ are equal 
\[
\theta(x) = \sqrt{x} I_{\omega}(\kappa x), \quad \C = \kappa ^2 / 2, \quad  D = \omega^2 - 1/4,\quad  \delta = 2 + 2\omega. 
\]
Define the following stopping times for Bessel process $Z_t^\delta$ and drifted process $X_t$:
\[
T^b = \inf\left\{ t \geq 0, \quad Z_t^\delta \leq b\right\}, \quad 
T^g = \inf\left\{ t \geq 0, \quad X_t \leq b\right\}.
\]
Laplace transform of the moment $T^b$ is known (see e.g. \cite{BorodinSalminen}):
\begin{equation}
\label{eq:Laplace_transform_Bessel_straight_line}
\mathbb{E}_{x_0} \left[e^{-\lambda T^g }\right] = \frac{x_0^{-\omega}}{b^{-\omega}} 
\frac{K_{\omega \left(x_0 \sqrt{2\lambda} \right)}}
{K_{\omega \left(b \sqrt{2\lambda} \right)}}.
\end{equation}

The inversion of (\ref{eq:Laplace_transform_Bessel_straight_line}) can be made via the residue theorem. Formulas for zeros of function $K_{\nu}(z)$ can be found in \cite{HamanaMatsumoto}. Application of formula (\ref{eq:Laplace_transform_F3}) yields Laplace transform of the stopping time $T^g$:
\begin{eqnarray}
\mathbb{E}_{x_0} \left[e^{-\lambda T^g }\right] 
&=& 
\frac{x_0^{\frac{\delta- 2}{2}}}{b^{\frac{\delta- 2}{2}}} 
\frac{ I_{\omega}(\kappa b) }{ I_{\omega}(\kappa x_0)} 
\mathbb{E}_{x_0} \left[  e^{-(\lambda + \kappa^2 /2)T^b} \right]
\\ \nonumber
&=& 
\frac{ I_{\omega}(\kappa b) }{ I_{\omega}(\kappa x_0)} 
\frac{ K_{\omega}(x_0 \sqrt{2\lambda + \kappa ^2}) }{ K_{\omega}(b \sqrt{2\lambda + \kappa ^2})} 
\end{eqnarray}
For the density $\rho_{x_0}^{T^g} (t)$ we have the following integral representation:
\begin{eqnarray}
\rho_{x_0}^{T^g} (t) = \frac{1}{2\pi i} \frac{ I_{\omega}(\kappa b) }{ I_{\omega}(\kappa x_0)} \int_{\gamma - i\infty}^{\gamma + i\infty}
e^{\lambda t}
\frac{ K_{\omega}(x_0 \sqrt{2\lambda + \kappa ^2}) }
     { K_{\omega}(b \sqrt{2\lambda + \kappa ^2})} d\lambda.
\end{eqnarray}
Now we turn to the first hitting time $ T^h = \inf\left\{ t \geq 0, \quad X_t \leq h(t) \right\} $ of the slopping line $h(t) = at + b$ by the process $X_t$. Application of Theorem \ref{thm:two_params_F3} yields closed form formula for the density $\rho_{x_0}^{T^h}$: 
\begin{eqnarray}
\begin{array} {c}
\rho_{x_0}^{T^h} (t) = 
e^{\frac{a x_0^2}{2b} - \frac{a\kappa^2 t^2}{2(at + b)}- \frac{a(at + b)}{2 b}} \frac{ I_{\omega}(\kappa (at + b) }{ I_{\omega}(\kappa x_0)} \times
\\ 
\times
\frac{1}{2\pi i} 
\int_{\gamma - i\infty}^{\gamma + i\infty}
e^{\frac{\lambda bt}{at + b}}
\frac{ K_{\omega}(x_0 \sqrt{2\lambda + \kappa ^2}) }{ K_{\omega}(b \sqrt{2\lambda + \kappa ^2})} d\lambda
\end{array} 
\end{eqnarray}
\end{example}

\subsection{Drift functions from $\Ffourth : \mu^2 + \mu' = \A x^2 + 2 \C + \D/ x^2$}
In this subsection we consider the diffusion process $X_t$ of the form (\ref{eq:SDE_mu}) with $n = 4$. 
\begin{proposition}
Solution of equation (\ref{eq:theta}) for $n = 4$ is the following function:
\begin{equation}
\label{eq:Theta4}
	\THfourth(x) = 
	 \frac{c_1}{\sqrt{x}} M_{\nu + \frac{1}{4}, \frac{\sqrt{1/4 + \D}}{2}} \left(\sqrt{\A} x^2 \right)
	+ \frac{c_2}{\sqrt{x}} W_{\nu + \frac{1}{4}, \frac{\sqrt{1/4 + \D}}{2}} \left(\sqrt{\A} x^2\right).
\end{equation}
Here $M_{\lambda, \mu}(z)$ and $W_{\lambda, \mu}(z)$ are Whittaker functions (i.e. confluent hypergeometric functions), for definitions and properties see e.g. \cite{AbraStegun}. Constant $\nu$ is equal 
\begin{equation}
\label{eq:nu_def_F4}
\nu = -\frac{1}{4} - \frac{\C}{2\sqrt{\A}}.
\end{equation}
\end{proposition}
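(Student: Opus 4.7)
The plan is to follow the same pattern as the preceding subsections: first linearise the Riccati equation into a second-order linear ODE via the standard log-derivative substitution, and then reduce that ODE to Whittaker's canonical form by a power-plus-quadratic change of variables.

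Setting $\mu = \THfourth'/\THfourth$ in $\mu' + \mu^2 = \A x^2 + 2\C + \D/x^2$ collapses the left-hand side to $\THfourth''/\THfourth$, so the Riccati equation is equivalent to the Sturm--Liouville problem
\begin{equation*}
\THfourth''(x) = \left(\A x^2 + 2\C + \frac{\D}{x^2}\right)\THfourth(x).
\end{equation*}
(This is the content of (\ref{eq:theta}) read as a genuine second-order ODE; the first-order form printed there is a typo, since Cole--Hopf linearisation of a Riccati equation is always second order.)

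Next I would introduce the new independent variable $z = \sqrt{\A}\,x^2$ together with the ansatz $\THfourth(x) = x^{-1/2}\Phi(z)$. The prefactor $x^{-1/2}$ is chosen precisely to kill the first-derivative term produced by the chain rule. A direct computation gives
\begin{equation*}
\THfourth''(x) = \tfrac{3}{4}x^{-5/2}\,\Phi(z) + 4\A\, x^{3/2}\,\Phi''(z),
\end{equation*}
and after substitution into the ODE and division by $4\A\, x^{3/2}$ one obtains
\begin{equation*}
\Phi''(z) = \left[\frac{1}{4} + \frac{\C}{2\sqrt{\A}}\cdot\frac{1}{z} + \frac{4\D - 3}{16\, z^2}\right]\Phi(z),
\end{equation*}
which is Whittaker's equation with indices
\begin{equation*}
\lambda = -\frac{\C}{2\sqrt{\A}} = \nu + \tfrac{1}{4},\qquad m = \frac{\sqrt{1/4 + \D}}{2},
\end{equation*}
the first identity using the definition of $\nu$ in (\ref{eq:nu_def_F4}). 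The two independent Whittaker solutions $M_{\lambda,m}(z)$ and $W_{\lambda,m}(z)$, multiplied by $x^{-1/2}$ and evaluated at $z = \sqrt{\A}\,x^2$, yield exactly (\ref{eq:Theta4}).

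The only real obstacle is the bookkeeping in the last step — ensuring that the powers of $x$ produced when converting the $\C$-term and $\D/x^2$-term to the variable $z$, together with the cross-terms from differentiating the $x^{-1/2}$ prefactor, combine to give the Whittaker coefficients $-1/4$, $\lambda/z$, $(1/4 - m^2)/z^2$ with the right $\D$-dependence in $m$. Alternatively, and in keeping with the style of the earlier subsections, the entire computation can be replaced by a direct application of formula~5 of \cite[Appendix~4]{BorodinSalminen} with the identifications $\gamma^2 = \A$ and $\lambda = \C$; the reduction above is precisely what justifies that reference.
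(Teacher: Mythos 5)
Your derivation is correct, and you have rightly diagnosed that the printed equation (\ref{eq:theta}) must be read as the second-order linearisation $\THn'' - (\mu'+\mu^2)\THn = 0$ (consistent with $\THn = e^{\int\mu\,dx}$, which gives $\THn''/\THn = \mu'+\mu^2$). The computation checks out: with $z=\sqrt{\A}x^2$ and $\THfourth = x^{-1/2}\Phi(z)$ one gets $\Phi'' = \bigl(\tfrac14 + \tfrac{\C}{2\sqrt{\A}}z^{-1} + \tfrac{4\D-3}{16}z^{-2}\bigr)\Phi$, which is Whittaker's equation with $\lambda = -\C/(2\sqrt{\A}) = \nu+\tfrac14$ and $m^2 = (4\D+1)/16$, i.e.\ $m = \tfrac12\sqrt{1/4+\D}$, exactly as claimed. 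The difference from the paper is one of packaging rather than substance: the paper's entire proof is the citation ``apply formula 6 with $\lambda=\C$, $p^2 = 1/4+\D$, $q^2=\A$ from Appendix~4 of Borodin--Salminen'' (you guessed formula~5; it is formula~6), whereas you carry out the reduction to Whittaker canonical form that this table entry encapsulates. Your version is self-contained and actually verifies the index bookkeeping $(\nu+\tfrac14,\ \tfrac12\sqrt{1/4+\D})$ that the paper asserts without computation, at the cost of a page of chain-rule algebra; the paper's version is shorter but opaque to a reader without the handbook. Neither addresses the implicit requirement $\A>0$ for $\sqrt{\A}$ to be real, but that is not a defect relative to the paper.
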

\begin{proof}
Apply formula 6 with $\lambda = \C$, $p^2 = 1/4+ \D$ and  $q^2 = \A$ in \cite[~ Appendix 4]{BorodinSalminen}.
\end{proof}
\begin{proposition}
Let process $X_t$ be a solution to the equation (\ref{eq:SDE_mu}) with $n = 4$. Corresponded FPKE has the following Lie symmetries only 	
\begin{eqnarray}
\begin{array}{l}
\Sym_{1,2}: \quad  \frac{u(x,t)}{\U \left( \X_{\Sym_{1, 2}}, \T_{\Sym_{1, 2}} \right)} =  
\frac{\THfourth\left(x\right)}{\THfourth\left(\X_{\Sym_{1, 2}}\right)} 
\frac{e^{2\sqrt{\A} \left(\nu - 1/4 \pm 1/4 \right) t \mp \frac{\sqrt{\A} x^2 }{2} }}
     {e^{2\sqrt{\A} \left(\nu - 1/4 \pm 1/4 \right) \T_{\Sym_{1, 2}} \mp \frac{\sqrt{\A} \X_{\Sym_{1, 2}}^2}{2} }}
\\ 
\Sym_{3}: \quad \frac{u(x,t)}{\U(x, t + \epsilon)} =  1  
\\ 
\Sym_4: \quad \frac{u(x,t)}{\U(x, t)} = e^\epsilon 
\\ 
\Sym_\beta: \quad u(x,t) = \beta(x,t) + \U\left(x, t \right).  
\end{array}
\end{eqnarray}
Here variables $\X_{\Sym_{1, 2}}$ and $\T_{\Sym_{1, 2}}$ are defined as
\begin{equation}
\X_{\Sym_{1, 2}} = \frac{x}{\sqrt{1+\epsilon e^{\pm 2\sqrt{\A t}}}}, \quad 
\T_{\Sym_{1, 2}} = -\frac{\ln\left(\epsilon + e^{\mp 2\sqrt{\A}t}\right)}{2\sqrt{\A}}.
\end{equation}
\end{proposition}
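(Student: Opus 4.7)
The plan is to parallel the treatment given in Proposition \ref{prop:symm_F1} for the family $\Ffirst$: first determine the infinitesimal data $\xi,\tau,\varphi$ by solving the ODE system (\ref{eq:main_system_for_symmetries}) with the specialization appropriate to $\Ffourth$, and then exponentiate each resulting vector field via (\ref{eq:exponentiation}) to obtain the finite transformations. Since $\mu\in\Ffourth$ has $\A\neq 0$ and $\D\neq 0$ (with $\B=0$), the fourth equation of (\ref{eq:main_system_for_symmetries}) forces $\rho\equiv 0$, the first equation $\tau_{ttt}-4\A\tau_t=0$ gives
\begin{equation*}
\tau(t)=c_0+c_1 e^{2\sqrt{\A}t}+c_2 e^{-2\sqrt{\A}t},
\end{equation*}
and the third equation reduces to $\eta_t=-\tfrac14\tau_{tt}-\C\tau_t$, which integrates in closed form. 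Together with the scaling $u\mapsto e^\epsilon u$ and the additive freedom coming from linearity, this yields a four-dimensional point Lie algebra, and the branches $c_0$, $c_\pm$, together with the two residual generators, correspond exactly to $\Sym_3$, $\Sym_{1,2}$, $\Sym_4$, $\Sym_\beta$.

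Next I would exponentiate the generator with $\tau=c_\pm e^{\pm 2\sqrt{\A}t}$ separately. From (\ref{eq: field for time homogeneous}) one has $\xi=\pm\sqrt{\A}c_\pm e^{\pm 2\sqrt{\A}t}x$, so the system (\ref{eq: systems to determine action of vector field}) becomes
\begin{equation*}
\frac{d\T}{d\epsilon}=c_\pm e^{\pm 2\sqrt{\A}\T},\qquad \frac{d\X}{d\epsilon}=\pm\sqrt{\A}c_\pm e^{\pm 2\sqrt{\A}\T}\,\X,
\end{equation*}
which integrates explicitly to
\begin{equation*}
\T_{\Sym_{1,2}}=-\frac{\ln\bigl(\epsilon+e^{\mp 2\sqrt{\A}t}\bigr)}{2\sqrt{\A}},\qquad \X_{\Sym_{1,2}}=\frac{x}{\sqrt{1+\epsilon e^{\pm 2\sqrt{\A}t}}},
\end{equation*}
after the innocuous reparametrization $c_\pm\epsilon\mapsto\epsilon$. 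The symmetries $\Sym_3$ and $\Sym_4$ drop out instantly from the branches $\tau=c_0$ and the scaling generator, and $\Sym_\beta$ follows from linearity of the FPKE.

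The main obstacle, and essentially the only substantial computation, is the identification of the multiplicative prefactor. One must solve $d\U/d\epsilon=\alpha(\X,\T)\U$ with
\begin{equation*}
\alpha(x,t)=-\frac{x^2}{4}\tau_{tt}(t)+\frac{x}{2}\mu(x)\tau_t(t)+\eta(t).
\end{equation*}
Using (\ref{eq:mu_theta_link}), the identity $\tfrac{d}{d\epsilon}\ln\THfourth(\X)=\mu(\X)\,d\X/d\epsilon=\mu(\X)\xi(\X,\T)$ absorbs the middle term and is precisely what produces the factor $\THfourth(x)/\THfourth(\X_{\Sym_{1,2}})$ in the final identity. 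The quadratic piece $-(\X^2/4)\tau_{tt}$ integrates along the orbit to contribute $\mp\tfrac{\sqrt{\A}}{2}(x^2-\X_{\Sym_{1,2}}^2)$. The constant-in-$x$ piece $\eta$ integrates to $2\sqrt{\A}(\nu-\tfrac14\pm\tfrac14)(t-\T_{\Sym_{1,2}})$ after inserting the explicit $\nu$ from (\ref{eq:nu_def_F4}); the combination $\nu-\tfrac14\pm\tfrac14$ is what collects the constants of integration arising from $\eta_t=-\tfrac14\tau_{tt}-\C\tau_t$ together with the boundary contribution from the quadratic term. Assembling these three exponential factors with $\THfourth(x)/\THfourth(\X_{\Sym_{1,2}})$ gives exactly the stated form of $\Sym_{1,2}$. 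Finally, one verifies by dimension count on the Lie algebra that no additional point symmetries exist, so the list displayed in the proposition is complete.
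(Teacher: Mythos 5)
Your proposal is correct and follows essentially the same route as the paper: the paper's own justification (given for Proposition \ref{prop:symm_F1} and invoked "analogically" for the other families) is precisely to solve the determining system (\ref{eq:main_system_for_symmetries}) — here with $\D\neq 0$ forcing $\rho\equiv 0$ and $\tau_{ttt}=4\A\tau_t$ — and then exponentiate via (\ref{eq:exponentiation}). Your additional detail on integrating $\alpha$ along the orbit (absorbing the $\frac{x}{2}\mu\tau_t$ term into $\ln\THfourth$ and using $\int\eta(\T(s))\,ds=\mathrm{const}\cdot(\T-t)$) correctly fills in what the paper dismisses as "tedious algebra."
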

\begin{proposition}
Any Lie symmetry (\ref{eq:Basic_transform}) such that $\T(0) = 0$ is represented by the following formula
	\begin{equation}
\Sym_{1}: \quad	\frac{u(x,t)}{\U(\X, \T)} = \frac{\THfourth\left( x \right)}
	{\THfourth\left(\X \right)}
	\frac{\T_{+}^{\nu}}{\T_{-}^{\nu - 1/2}} 
	e^{
		-\frac{x^2 \sqrt{\A} \left[(\beta + 1)\T_{+} - (\alpha + 1)\T_{-} \right] }{2 \T_{+} \T_{-}}
	},
	\end{equation}
where 
\[
\X = x \sqrt\frac{1 + \alpha + \beta}{\T_{+} \T_{-}}, 
\quad \T = t + \frac{\ln \left(\T_{-} / \T_{+} \right)}{2\sqrt{A}}
\]
and variables $\T_{-}$ and $\T_{+}$ are defined in (\ref{eq:T_plus_minus}), constant $\nu$ is defined in (\ref{eq:nu_def_F4}). 
\end{proposition}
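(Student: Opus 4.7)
The plan is to mirror the proof of Proposition~\ref{prop:two_params_F2}. For $\mu\in\Ffourth$ the compatibility conditions in Theorem~\ref{thm:Lie_symm_existence} force $\D\neq 0$, which in the system (\ref{eq:main_system_for_symmetries}) makes $\rho\equiv 0$ and $\B=0$. The reduced equation $\tau_{ttt}=4\A\tau_t$ then has general solution $\tau_t=a\,e^{2\sqrt{\A}\,t}+b\,e^{-2\sqrt{\A}\,t}$, while $\eta$ is pinned down up to an additive constant that exponentiates only to the trivial scaling $\Sym_4^\epsilon$. Hence the essential action on $(x,t)$ is governed by a two-parameter family of generators, and one expects a two-parameter family of exponentiated symmetries subject to $\T(0)=0$.

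To exhibit the candidate, I would integrate the flow system (\ref{eq: systems to determine action of vector field}). The equation $d\T/d\epsilon=\tau(\T)$ separates and, after tuning the integration constants so that $\T(0)=0$ holds identically in $t$, integrates to $\T=t+\frac{1}{2\sqrt{\A}}\ln(\T_{-}/\T_{+})$ with $\T_{\pm}$ as in (\ref{eq:T_plus_minus}); the equation $d\X/d\epsilon=\tfrac12\X\,\tau_t(\T)$ then yields $\X=x\sqrt{(1+\alpha+\beta)/(\T_{+}\T_{-})}$; and the $\U$-equation, combined with the form (\ref{eq: field for time homogeneous}) of $\varphi$, produces the Gaussian factor and the $\THfourth$ prefactor. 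To show the family is exhaustive among symmetries with $\T(0)=0$, I would verify the two closure identities
\[
\Sym_{1}^{\epsilon}\circ\Sym_{1}^{\alpha,\beta}=\Sym_{1}^{\hat\alpha,\hat\beta},\qquad \Sym_{2}^{\epsilon}\circ\Sym_{1}^{\alpha,\beta}=\Sym_{1}^{\tilde\alpha,\tilde\beta},
\]
with explicit update rules $(\alpha,\beta)\mapsto(\hat\alpha,\hat\beta)$ and $(\alpha,\beta)\mapsto(\tilde\alpha,\tilde\beta)$. The generator $\Sym_{3}^{\epsilon}$ violates $\T(0)=0$ unless $\epsilon=0$ and can only enter in combinations that restore this constraint; such combinations are already absorbed into the two rules above. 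The orbit of the identity under the connected symmetry group acting on $(x,t)$, modulo the scaling $\Sym_{4}^\epsilon$ and cut by the single condition $\T(0)=0$, therefore coincides with $\{\Sym_{1}^{\alpha,\beta}\}$.

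The main obstacle is purely algebraic. Verifying $\Sym_{1}^{\epsilon}\circ\Sym_{1}^{\alpha,\beta}=\Sym_{1}^{\hat\alpha,\hat\beta}$ amounts to showing that the logarithmic time shift in $\T$ composes additively with the one-parameter shift of $\Sym_{1}^{\epsilon}$---which reduces to a rational recombination of $e^{\pm 2\sqrt{\A}\,t}$ producing new $\T_{\pm}$---and that the Gaussian weight $\exp\bigl(-x^{2}\sqrt{\A}[(\beta+1)\T_{+}-(\alpha+1)\T_{-}]/(2\T_{+}\T_{-})\bigr)$, together with the rescaling of $\X$ inside $\THfourth(\X)$, reassembles into the same template with the updated parameters. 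No new idea beyond the bookkeeping carried out in the proof of Proposition~\ref{prop:two_params_F2} is needed, but the manipulations on $\T_{\pm}$ and the Whittaker factor $\THfourth$ must be executed patiently.
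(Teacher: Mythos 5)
Your proposal is correct and takes essentially the same route as the paper: the paper proves this proposition (like its $\Fsecond$ and $\Fthird$ analogues) by referring to the closure argument of Proposition \ref{prop:two_params_F1}, i.e.\ exponentiating the generators obtained from the determining system (\ref{eq:main_system_for_symmetries}) and checking that $\Sym_{1(2)}^{\epsilon}\circ\Sym_{1}^{\alpha,\beta}=\Sym_{1}^{\hat\alpha,\hat\beta}$, which is exactly your plan. Your preliminary observations --- that $\D\neq 0$ forces $\rho\equiv 0$ so only the two dilation-type generators survive, and that pure time translation is excluded by $\T(0)=0$ --- match the paper's setup, and your exhaustiveness step is argued at the same (admittedly informal) level of detail as the paper's own sketch.
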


\begin{thm}
	Let $T^b$ be the first hitting time of the curve $b(t)$ by the process $X_t$. Consider two-parametric family of curves defined over the curve $b(t)$: 
	\[
	g^{\alpha, \beta} (t) = \frac{\sqrt{\T_{+} \T_{-}}}{\sqrt{1 + \alpha + \beta}}
	\left[b \left(t + \frac{\ln\left(\T_{-} / T_{+}\right)}{2\sqrt{\A}}\right) \right]
	\] 
    The distribution of the first hitting time to any curve $g(t)$ from $g^{\alpha, \beta}(t)$ can be explicitly represented in terms of the distribution of $T^b$:
	\begin{equation}
	\left.	\begin{array}{c}
	\frac{\rho_{x_0}^{T^g}(t)}
	{\rho_{\frac{x_0}{\sqrt{\alpha + \beta + 1}}}^{T^b}
		\left(t + \frac{\ln\left(\T_{-} / T_{+}\right)}{2\sqrt{\A}}\right)
	} = 
	\frac{\T_{-}^{-\nu} \T_{+}^{\nu -1/2}}{\sqrt{\alpha + \beta + 1}} \frac{\THfourth\left(g(t)\right)}{\THfourth\left(b\left(t + \frac{\ln\left(\T_{-} / T_{+}\right)}{2\sqrt{\A}}\right)\right)} 
	\times
\\
	\times
	\frac{\THfourth \left( \frac{x_0}{\sqrt{\alpha + \beta + 1}}\right)}
	{\THfourth \left(x_0\right)}
	\frac{e^{ 
			-\frac{\sqrt{\A} g^2(t) \left[(\beta + 1) \T_{+} - (\alpha + 1)\T_{-} \right] }{2\T_{+} \T_{-}}}}
	{e^{ 
			-\frac{\sqrt{\A} x_0^2 \left[\beta- \alpha \right] }{2\left(1 + \alpha + \beta\right)}}}
	\end{array} \right.
	\end{equation}
\end{thm}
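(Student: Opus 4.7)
The plan is to invoke Theorem \ref{thm:main} directly, using as the underlying symmetry the two-parametric symmetry $\Sym_1^{\alpha,\beta}$ established in the proposition immediately preceding this theorem. The structure will mirror the proofs of Theorems \ref{thm:F1_two_params}, \ref{thm:F2_two_params}, and \ref{thm:two_params_F3}; the only new ingredients are the explicit form of $f$, $\X$, $\T$ for the family $\Ffourth$, and the simplification of the prefactor at the initial time.

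First I would read off from $\Sym_1^{\alpha,\beta}$ the data required by Theorem \ref{thm:main}: the spatial change of variable $\X(x,t)=x\sqrt{(1+\alpha+\beta)/(\T_{+}\T_{-})}$, the time change $\T(t)=t+\ln(\T_{-}/\T_{+})/(2\sqrt{\A})$, and the multiplicative factor
\[
f(x,t)=\frac{\THfourth(x)}{\THfourth(\X)}\,\frac{\T_{+}^{\nu}}{\T_{-}^{\nu-1/2}}\,
\exp\!\left(-\frac{x^{2}\sqrt{\A}\,[(\beta+1)\T_{+}-(\alpha+1)\T_{-}]}{2\T_{+}\T_{-}}\right).
\]
The definition $g(t)=\sqrt{\T_{+}\T_{-}/(1+\alpha+\beta)}\,b(\T(t))$ is exactly the curve obtained by solving $\X(g(t),t)=b(\T(t))$, so equation (\ref{eq:g_def}) is satisfied automatically.

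Next I would compute the two pieces that feed into (\ref{eq:rho_transform}). Since $\T_{+}(0)=\T_{-}(0)=1+\alpha+\beta$, the map $\X(\cdot,0)$ reduces to division by $\sqrt{1+\alpha+\beta}$, so $\Psi(\mathscr{Y})=\sqrt{1+\alpha+\beta}\,\mathscr{Y}$ and $\Psi'\equiv\sqrt{1+\alpha+\beta}$. In particular $\X(x_{0},0)=x_{0}/\sqrt{1+\alpha+\beta}$. A direct differentiation gives $\partial\X/\partial x=\sqrt{(1+\alpha+\beta)/(\T_{+}\T_{-})}$, which is independent of $x$, so evaluation at $x=g(t)$ is immediate. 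Finally, because the process has unit variance, $\sigma\equiv 1$ and the $\sigma^{2}$ ratio in (\ref{eq:rho_transform}) is trivial.

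It then remains to substitute these quantities into (\ref{eq:rho_transform}) and simplify. Evaluating $f$ at $(g(t),t)$ and at $(x_{0},0)$, using $\T_{\pm}(0)=1+\alpha+\beta$ to collapse the $t=0$ prefactor
\[
\frac{\T_{+}^{\nu}(0)}{\T_{-}^{\nu-1/2}(0)}=\sqrt{1+\alpha+\beta}
\]
and reducing the $t=0$ exponent to $-x_{0}^{2}\sqrt{\A}(\beta-\alpha)/[2(1+\alpha+\beta)]$, the ratio $f(g(t),t)/f(x_{0},0)$ combined with $(\partial\X/\partial x)/\Psi'=1/\sqrt{\T_{+}\T_{-}}$ telescopes into the claimed prefactor $\T_{-}^{-\nu}\T_{+}^{\nu-1/2}/\sqrt{1+\alpha+\beta}$, and the displayed identity follows.

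The real work is bookkeeping rather than anything conceptual: the only mild obstacle is verifying the cancellation $\T_{-}^{-\nu+1/2}\T_{+}^{\nu}/\sqrt{\T_{+}\T_{-}}=\T_{-}^{-\nu}\T_{+}^{\nu-1/2}$ and checking the exponent arithmetic at $t=0$. Since every other step is an application of Theorem \ref{thm:main} with data already computed in the preceding proposition, the proof can reasonably be abbreviated as "fully analogous to the proof of Theorem \ref{thm:F2_two_params}", exactly in the style the author uses for the parallel results in $\Ffirst$, $\Fsecond$, $\Fthird$.
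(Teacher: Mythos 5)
Your proposal is correct and follows exactly the route the paper intends: apply Theorem \ref{thm:main} with the two-parametric symmetry $\Sym_1^{\alpha,\beta}$ for the family $\Ffourth$, reading off $f$, $\X$, $\T$ from the preceding proposition and using $\T_{\pm}(0)=1+\alpha+\beta$ to simplify $\Psi'$, $\X(x_0,0)$ and the $t=0$ prefactor (the paper itself omits the proof, treating it as fully analogous to Theorems \ref{thm:F1_two_params}--\ref{thm:two_params_F3}). Your bookkeeping — $\Psi'\equiv\sqrt{1+\alpha+\beta}$, $\partial\X/\partial x=\sqrt{(1+\alpha+\beta)/(\T_{+}\T_{-})}$, $\sigma\equiv 1$, and the exponent collapse $\T_{+}^{\nu}\T_{-}^{-\nu+1/2}/\sqrt{\T_{+}\T_{-}}=\T_{-}^{-\nu}\T_{+}^{\nu-1/2}$ — checks out against the stated identity.
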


\begin{thm}
	\label{thm:F4_to_BES}
	Let $X_t$ be a stochastic process of the form (\ref{eq:SDE_mu}) and $n = 4$. Let $b(t)$ and $g(t)$ also be smooth functions such that: 
	\begin{equation}
		g(t) = \frac{e^{-\sqrt{\A}t}}{\sqrt[4]{4\A}}b\left(e^{2\sqrt{\A} t} - 1\right).
	\end{equation}
	Define stopping times $T^g$ and $T^b$ for the process $X_t$ and for the Bessel process of dimension $\delta =2 + \sqrt{4\D + 1}$.  Then, there exists the following identity between densities $\rho^{T^g}$ and $\rho^{T^b}$:
\begin{equation}
	\label{eq:rho_F4_toBES}
\frac{	\rho^{T^g}_{x_0} (t)}{\rho^{T^b}_{z_0} \left(e^{2\sqrt{\A} t}  - 1\right)} 
	= 
	2\sqrt{\A}  
	\frac{x_0^{\frac{\delta-1}{2}}}
	     {g(t)^{\frac{\delta-1}{2}}}
	\frac{\THfourth\left(g(t)\right)}
	     {\THfourth(x_0)} 
\frac{	e^{\frac{\sqrt{\A} \left(g^2(t) - x_0^2\right)}{2}}}
	{	e^{-\sqrt{\A} t
			\left[2 + \frac{1 - \delta}{2} + 2\nu \right] }}
\end{equation}
Here $z_0 = \sqrt[4]{4\A} x_0$. Laplace transform of the moment $T^g$ can be represented as
\begin{eqnarray}
\begin{array} {c}
\mathbb{E}_{x_0}\left[e^{-2\sqrt{\A}\lambda T^g} \right] = 
\frac{z_0^ {\frac{\delta - 1}{2}}e^{-z_0^2 / 4}}{\THfourth(z_0 / \sqrt[4]{4 \A})} 
\times
\\ 
\times
\mathbb{E}_{z_0}
\left[
\left(Z^\delta_{T^b}\right)^{\frac{1 - \delta}{2}} e^{\frac{\left(Z^\delta_{T^b}\right)^2}{4 (T^b + 1)}}
\left(  T^b + 1 
	\right)^{\nu - \lambda}
\THfourth\left(\frac{Z^{\delta}_{T^b} / \sqrt[4]{4\A}}{\sqrt{T^b + 1}} \right)
\right]
\end{array} 
\end{eqnarray}
\end{thm}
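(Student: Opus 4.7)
The plan is to reduce the statement to a direct application of Theorem \ref{thm:main_ch_mes}, in exact parallel with the proofs of Theorems \ref{thm:F2_to_BM} and \ref{thm:F3_to_BES}. Thus the entire content of the argument is to exhibit an explicit relation of the form (\ref{eq:Basic_transform}) between solutions $u(x,t)$ of $(\LL-\partial_t)u=0$ (the FPKE for $X_t$ with drift $\mu\in\Ffourth$) and solutions $\U(\X,\T)$ of $(\GG-\partial_\T)\U=0$, where $\GGinf$ is the infinitesimal generator of the Bessel process $Z_t^\delta$ of dimension $\delta=2+\sqrt{4\D+1}$. Once such a relation is on the table, invoking Theorem \ref{thm:main_ch_mes} delivers (\ref{eq:rho_F4_toBES}) at once, and the Laplace-transform representation then follows by multiplying both sides by $e^{-\lambda t}$, integrating in $t$, and rewriting the result as a Bessel expectation via the time change $\T=e^{2\sqrt{\A}t}-1$.

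The correct ansatz is forced by fusing the two elementary reductions already used in this paper. The gauge factor $\THfourth(x)\,x^{-(\delta-1)/2}$ plays the role it did in Theorem \ref{thm:F3_to_BES}: it linearizes the Ricatti equation via (\ref{eq:theta}) and converts the $\D/x^{2}$-potential sitting in $\LL$ into the Bessel drift $(\delta-1)/(2x)$. The Gaussian factor $\exp(\tfrac{\sqrt{\A}}{2}x^{2})$ combined with the space-time change $\X=\sqrt[4]{4\A}\,e^{\sqrt{\A}t}x$, $\T=e^{2\sqrt{\A}t}-1$ plays the role it did in Theorem \ref{thm:F2_to_BM}: it eliminates the $\A x^{2}$ term. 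Concretely, I would verify by direct substitution the relation
\[
u(x,t)=\THfourth(x)\,x^{-(\delta-1)/2}\,\exp\!\Bigl(\tfrac{\sqrt{\A}}{2}x^{2}+\sqrt{\A}\bigl[1+\tfrac{1-\delta}{2}+2\nu\bigr]t\Bigr)\,\U\!\left(\sqrt[4]{4\A}\,e^{\sqrt{\A}t}x,\ e^{2\sqrt{\A}t}-1\right),
\]
with $\nu$ as defined in (\ref{eq:nu_def_F4}). From this relation one reads off $f$, $\X(x,t)$, $\T(t)$, and $\Psi(\mathscr{Y})=\mathscr{Y}/\sqrt[4]{4\A}$, so that $\T(0)=0$, and substituting into (\ref{eq:rho_transform}) reproduces (\ref{eq:rho_F4_toBES}); in particular the prefactor $2\sqrt{\A}$ appears through $\partial\X/\partial x\cdot(\Psi')^{-1}=\sqrt[4]{4\A}\,e^{\sqrt{\A}t}\cdot\sqrt[4]{4\A}=2\sqrt{\A}\,e^{\sqrt{\A}t}$, with the spurious $e^{\sqrt{\A}t}$ absorbed into the time exponential.

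The only non-trivial part is the verification of the ansatz. Term-by-term, one must check that differentiating $\THfourth(x)$, $x^{-(\delta-1)/2}$, and the Gaussian cancels the full potential $\A x^{2}+2\C+\D/x^{2}$ carried by $\LL$, while the time derivative of the exponential in $t$ accounts, together with the Jacobian $2\sqrt{\A}\,e^{2\sqrt{\A}t}$ produced by $\partial_{\T}$, for the remaining contribution. These cancellations are rigged by the defining Ricatti equation in (\ref{eq:Ricatti_equations}), by (\ref{eq:theta}), and by the two arithmetic identities $\nu=-\tfrac14-\C/(2\sqrt{\A})$ and $\delta=2+\sqrt{4\D+1}$, so that the computation reduces to the concatenation of the two calculations already performed in Theorems \ref{thm:F2_to_BM} and \ref{thm:F3_to_BES}. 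The main obstacle is therefore purely algebraic bookkeeping; no genuinely new idea intervenes.
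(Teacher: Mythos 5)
Your proposal is correct and follows essentially the same route as the paper: the paper's proof likewise consists of exhibiting the relation $u(x,t)=\THfourth(x)\,(\sqrt[4]{4\A}\,x)^{\frac{1-\delta}{2}}e^{\frac{\sqrt{\A}}{2}x^{2}-\C t+\frac{(2-\delta)\sqrt{\A}}{2}t}\,\U(\sqrt[4]{4\A}\,x e^{\sqrt{\A}t},e^{2\sqrt{\A}t}-1)$ and invoking Theorem \ref{thm:main_ch_mes}. Your ansatz agrees with this up to an immaterial multiplicative constant (and your time exponent coincides with the paper's via $\C=-2\sqrt{\A}(\nu+\tfrac14)$), and your accounting of the $2\sqrt{\A}\,e^{\sqrt{\A}t}$ Jacobian factor is the same bookkeeping the paper performs implicitly.
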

\begin{proof}
	Proof is fully analogical to the proof of Theorem \ref{thm:F1_to_BM}. Basic relation (\ref{eq:Basic_transform}) has the following form
	\[
	u(x,t) = \THfourth(x) \left(\sqrt[4]{4\A} x \right)^{\frac{1 -\delta} {2}} e^{\frac{\sqrt{\A} }{2} x^2 -\C t + \frac{(2- \delta)\sqrt{\A}}{2} t}  \U\left(\sqrt[4]{4\A} x e^{\sqrt{\A} t} ,e^{2 \sqrt{\A} t} - 1\right)
	\]
\end{proof}
\begin{example}
	Let's consider square-root diffusion process $U_t$ (this process is known in finance as CIR process)
	\[
	dU_t = \kappa(\theta - U_t) dt + \sigma U_t^{1/2}dW_t.
	\] 
	Lamperti transform of $U_t$ is the radial Ornstein--Uhlenbeck process $X_t = 2U_t^{1/2}/ \sigma$ defined as a solution of the following SDE:
\begin{equation}
\label{eq:radial_OU}
	dX_t =\left( \frac{\omega + 1/2}{X_t} - \gamma X_t \right) dt + dW_t,
\end{equation}
where $\omega = 2\kappa \theta / \sigma^2 - 1$ and $\gamma = 2\kappa$. Process (\ref{eq:radial_OU}) arises in the radial part of $\delta$-dimensional Ornstein--Uhlenbeck processes.  Connections between these processes and Bessel processes were studied by \cite{DeLongSqRoot}, \cite{YorSqRoot}.  Function $\theta(x)$ and constants $\A$, $\B$ and $\D$ are equal
\begin{equation}
\begin{array}{c}
\theta(x) = x^{\omega+ 1/2} e^{-\gamma x^2 /2}, \quad \nu = \frac{\omega}{2} + \frac{1}{4},
\\
\A=\gamma^2, \quad \C = -\gamma(\omega+1),\quad \D = \omega^2 - 1/4.
\end{array}
\end{equation}
Let us consider the stopping moment $T^g = inf \{t \geq 0, X_t \leq  b\}$. Laplace transform of the moment $T^g$ is known (see e.g. \cite{BorodinSalminen}):
\begin{equation}
\label{eq:Laplace_radial_OU}
\mathbb{E}_{x_0} \left[e^{- 2 \lambda \gamma T^g}\right] = 
\frac{x_0^{-2\nu - 1/2}}{b^{-2\nu - 1/2}} 
\frac{e^{\gamma x_0^2 /2}}{e^{\gamma b^2 /2}}
\frac{W_{\nu + 1/4 - \lambda, \nu - 1/4}\left( \gamma x_0^2 \right)}
{W_{\nu +1/4 - \lambda, \nu - 1/4} \left( \gamma b^2 \right)}.
\end{equation}
We define stopping moment 
$T^b= inf \{t \geq 0, Z^\delta_t \leq  c\sqrt{(t + 1)} \}$. Here $Z_t^\delta$ is the Bessel process of the dimension $\delta = 3 + 2\omega$ started from $z_0 =\sqrt{2} x_0$. Application of Theorem \ref{thm:F4_to_BES} and formula (\ref{eq:Laplace_radial_OU}) with $c = b\sqrt{2\gamma}$ yields Mellin transform of $T^b$: 
\begin{eqnarray}
\mathbb{E}_{z_0} \left[ \left(T^b  +  1\right)^{\lambda} \right]
&=&
\frac{x_0^{-2\nu }}{b^{-2\nu}}\frac{\theta\left(x_0 \right)}{\theta\left(b \right)}
\frac{e^{\gamma x_0^2 / 2}}{e^{\gamma b^2 / 2}}
\mathbb{E}_{x_0} \left[e^{- 2 (1/4 - \lambda) \gamma T^g}\right].
\\ \nonumber
&=&
\frac{z_0^{-\delta /2 }}{c^{-\delta /2}} \frac{e^{z_0^2 /4}}{e^{c^2/ 4}} 
\frac{W_{\lambda + \delta /4 - 1/4 , \delta /4 - 1/2}\left( z_0^2 /2 \right)}
{W_{\lambda + \delta /4 - 1/4 , \delta /4 - 1/2} \left( c^2/ 2 \right)}.
\end{eqnarray}
\end{example}

\begin{example}
Now we turn to the process $X_t$ defined by the following SDE:
\[
dX_t = \left( -\frac{1}{2 X_t} + 2 \kappa \coth\left(\kappa X_t^2 \right)\right)dt + dW_t.
\]
This process corresponds to the following specifications:
\[
\A = 4 \kappa^2, \quad \C = 0, \quad \nu = -1/4, \quad
\D = 3/4, \quad c_1 = 1/2, \quad c_2 = 0.
\]
Function $\theta(x) $ is equal $ \theta(x) = \frac{1}{2\sqrt{x}} M_{0, 1/2} \left( 2\kappa x^2\right)$.
It is well known that $ M_{0, 1/2} \left( 2z\right) = 2\sinh\left( z \right) $.
We consider stopping moment $T^g$ defined as 
\[
T^g = \inf\left\{ t\geq0, \, X_t\leq b \right\}.
\]
Using Theorem \ref{thm:F4_to_BES} we get Laplace transform of $T^g$: 
\[
\mathbb{E}_{x_0} \left[e^{-4\kappa \lambda  T^g} \right] = 
\frac{1 - e^{-2\kappa b^2}}{1 - e^{-2\kappa x_0^2 }} 
\frac{W_{-1/4 - \lambda , 1/2}\left( 2\kappa x_0^2\right)}
{W_{-1/4 - \lambda , 1/2} \left( 2\kappa b^2 \right)}.
\]
\end{example}
\begin{remark}
	Craddock and Lennox considered in \cite{CraddockLennox} the process defined by SDE:
	\[
	dU_t = 2U_t tanh \left( U_t \right) dt + \sqrt{2 U_t} dW_t, \quad  	U_t = \frac{X^2_t}{2}  .	
	\]
	Lamperti transform $X_t$ of the process $U_t$ corresponds to the following function $\theta(x) = \cosh\left(x^2 \right) / \sqrt{x}$. This function can also be represented in the form (\ref{eq:Theta4}). It is easy to see, if we use the following relations, see e.g \cite{AbraStegun}.
\begin{equation}
\begin{array}{c} 	
	W_{\lambda, \mu}(z) = \frac{\Gamma\left(-2\mu \right)}{\Gamma\left(\frac{1}{2} - \mu - \lambda \right)}
	M_{\lambda, \mu} (z) + \frac{\Gamma\left(2\mu \right)}{\Gamma\left(\frac{1}{2} + \mu - \lambda \right)}
	M_{\lambda, -\mu} (z).
\\
	M_{0, -1/2}(z) = \frac{\Gamma(1/2)}{\sqrt{2}} \sqrt{z} I_{-1/2} (z), \quad 
	I_{-1/2} (z) = \left( \frac{2}{\pi z} \right)^{1/2} \cosh z.
\end{array}
\end{equation}
\end{remark}

\begin{remark}
Theorems \ref{thm:F1_to_BM}, \ref{thm:F2_to_BM}, \ref{thm:F3_to_BES} and \ref{thm:F4_to_BES} gives the following result: Boundaries $b(t) = a + bt^2$, $b(t) = a + b\sqrt{t + c}$ and $b(t) = a + bt$ and their two-parametric transforms are unique in meaning that distribution of the first passage time for these boundaries by a standard Brownian Motion can be explicitly represented in terms of first passage time to the fixed level boundary by a time-homogeneous diffusion process. Boundaries $b(t) = b\sqrt{t + c}$ and $b(t) = b$ and their two-parametric transforms are unique in the same meaning for the Bessel process. Let us mention that the distribution of the first passage time to the fixed level boundary by a time-homogeneous diffusion process can be found by spectral expansion method proposed in \cite{Linetsky}.  
\end{remark}
\begin{remark}
All boundary crossing identities can be easily generalized to the case of two-sided boundaries. 
\end{remark}

\end{document}